\theoremstyle{theorem}
\newtheorem{defn}{Definition}[subsection]
\newtheorem{thm}[defn]{Theorem}
\newtheorem{exmp}[defn]{Example}
\newtheorem{lem}[defn]{Lemma}
\newtheorem{rmk}[defn]{Remark}
\newtheorem{prop}[defn]{Proposition}
\newtheorem{cor}[defn]{Corollary}
\begin{document}
\title[A survey on composition operators]{A survey on composition operators on some function spaces}

\author[E. D'Aniello]{Emma D'Aniello}
\address{Dipartimento di Matematica e Fisica\\ 
Universit\`a degli Studi della Campania ``Luigi Vanvitelli"\\
Viale Lincoln n.5 \\
81100 Caserta\\
ITALIA}
\email{emma.daniello@unicampania.it}

\author[M. Maiuriello]{Martina Maiuriello}
\address{Dipartimento di Matematica e Fisica\\ 
Universit\`a degli Studi della Campania ``Luigi Vanvitelli"\\
Viale Lincoln n.5 \\
81100 Caserta\\
ITALIA}
\email{martina.maiuriello@unicampania.it}

\begin{abstract} 
We investigate some types of composition operators, linear and not, and conditions for some spaces to be mapped into themselves 
and for the operators to satisfy some good properties.   
\end{abstract}

\date{\today}

\subjclass{Primary: 47H30, 47B33; Secondary: 26A15, 26A21, 26A16, 26A45}
\keywords{Composition operators, superposition operators, function spaces.}

\maketitle

\section{Introduction}

The following non-linear operators, $C_ f$,  $x \mapsto f \circ x$ and $S_{h}$, $x(\cdot) \mapsto h(\cdot, x(\cdot) )$ called, respectively, the (autonomous) composition operator 
and the (non-autonomous) superposition operator, have been widely studied.  They especially appear in the process of solving certain non-linear integral equations. For instance,  
in \cite{ABM} and \cite{ABK}, the authors show that existence and uniqueness results for solutions of non-linear integral equations of Hammerstein-Volterra type
\begin{eqnarray*} 
& &x(t)=g(t) + \lambda \int _{0}^{t} k(t,s)f(x(s))ds, (t \geq0), \\ 
& &x(t)=g(t) + \lambda \int _{0}^{t} k(t,s)h(s,x(s))ds, (t \geq0)
\end{eqnarray*} 
and of Abel-Volterra type
 \[x(t)=g(t)+\int_{0}^{t} \frac{k(t,s)f(x(s))}{\vert t-s \vert^\nu} ds , (0 \leq t \leq 1)\] are closely related to existence and uniqueness results for solutions of operator equations involving $C_f$ and $S_{h}$. 
 Also, for example, in \cite{SA}, it is proved, for the integral equation of Volterra type in the Henstock setting, that the existence of a continuous solution depends, among other conditions, on the property of 
 mapping continuous functions into Henstock-integrable functions, satisfied by the involved non-autonomous superposition operator; in \cite{DMS},  
the authors  provide,  in the Henstock-Kurzweil-Pettis setting,  existence and closure results for integral problems driven by regulated functions, both in single- and set-valued cases (\cite{CDMS2}).
Hence, in many fields of non-linear analysis and its applications (in particular to integral equations), the following problem becomes of interest:

\hfill \break
 {\it Given a class $X$ of functions, find conditions on (and eventually characterise) the functions under which the generated operators map the space $X$ into itself.}
\hfill \break

\noindent
The case of the operator $S_{h}$ is called in the literature {\it Superposition Operator Problem} (\cite{AZ1}, \cite{BLS1}, \cite{BLS2}) or, sometimes, 
{\it Composition Operator Problem} (\cite{ABM}, \cite{AGM}) since it is also considered for the autonomous case $C_f$ and, in this simpler form as well, 
it is sometimes unexpectedly difficult. In addition to the action spaces of the non-linear operators $C_f$ and $S_h$, boundedness and continuity 
are properties which have also been the object of several studies: many results analysing such properties for composition operators on function spaces, among 
which $Lip$, $Lip_{\gamma}$, $BV$, $BV_{p}$, $AC$ and $W^{1,p},$ appeared in the last decades  (see, for instance, the papers cited throughout this note). 

\indent
This note is intended  to serve as a survey on the state of the art of some aspects and to describe some further properties of the non-linear  operators $S_{h}$ and 
$C_{f}$ ({\it left composition operator}), and the linear operator $T_{f}: x \mapsto x \circ f$ ({\it right composition operator}), discuss them and give examples. 
Clearly, the theory is wide and far from being complete. \\
This note is organised into four sections, including the introduction.\\
In Section 2, we briefly introduce the investigated function spaces, and we recall some main properties. \\
In Section 3, we analyse the non-linear operators $C_{f}$ and $S_{h}$. First, we investigate them on Lipschitz spaces and some spaces of functions of bounded variation, 
providing the main results in the literature with examples.  Then, we focus on spaces of Baire functions. In particular, we show that when the operator 
$C_{f}$ maps the space of Baire functions into itself, then it is automatically continuous.  We also characterise the non-linear operator 
$S_h$ which transforms  Baire one functions into maps of the same type, and we show how to construct a function $h$ easily which is not even Baire one 
but such that the associated operator $S_{h}$ maps the space of Baire functions into itself. \\
Section 4 is devoted to the linear composition operator $T_{f}$. We start by investigating Lipschitz spaces and some spaces of functions of bounded variation. 
In particular, our study shows that, unlike the case of left compositors, not all the investigated spaces have the same type of right compositors.  
Then, we study the linear operator $T_f$ on the space of Baire one functions and we develop some parallel results on the space of Baire two functions. 
Unlike the case of left Baire compositors which are the same for Baire classes of any order, and in particular for Baire one functions and Baire two functions, 
we show that it is not the case when we consider right composition. Namely, we show that right Baire one compositors do not coincide with right Baire two compositors.

\section{Preliminary definitions}
\indent 
In this section, we collect some basic notations, definitions and results, which will be needed in the sequel.

 \noindent
By $Lip([a,b])$ and $Lip_{\gamma}([a,b])$, we denote, respectively,  the space of all Lipschitz functions on $[a,b]$, and the space 
of all $\gamma$-Lipschitz (or H\"{older} continuous) functions on $[a,b]$, endowed with the usual norms 
\[ {\Vert f \Vert}_{Lip} = f(a) + Lip(f) \, \text{ and } \, {\Vert f \Vert}_{Lip_{\gamma}} = f(a) + Lip_{\gamma}(f),\]
with 
\[Lip(f) = \sup_{\substack{ x,y \in [a,b] \\ x \not= y}} \frac{\vert f(x) - f(y) \vert}{\vert x - y \vert} \, \text{ and } \, Lip_{\gamma}(f) = 
\sup_{\substack{ x,y \in [a,b] \\ x \not= y}} \frac{\vert f(x) - f(y) \vert}{{\vert x - y \vert}^{\gamma}}.\]

\subsection{$p$-variation, Jordan variation, Riesz variation}

\begin{defn}
Let $f$ be a real valued function defined on $H \subseteq \Bbb R$.  For $p > 0$ we denote by $V_{p}(f,H)$ the $p$-variation of $f$ on $H$, 
that is the least upper bound of the 
sums \[\sum_{i=1}^{n} \vert f(b_i)-f(a_i) \vert ^p,\] where $ \{ [a_i , b_i]\}_{i=1,..,n}$ is an arbitrary finite system of -overlapping intervals 
with $a_i, b_i \in H$, $i=1,...,n.$
\end{defn}
\noindent
If $H$ has a minimal as well as a maximal element, then $V_{p}(f,H)$ is the supremum of the sums  \[\sum_{i=1}^{n} \vert f(t_i)-f(t_{i-1}) \vert ^p,\] 
where 
$\min (H)=t_0<t_1<\dots < t_n =\max (H)$ and $t_i \in H$, $i=0,...,n.$ \\
\noindent

\noindent
From now on in this paragraph, we consider $f$ as a function defined on a closed interval of the real line, that is $f:[a,b] \rightarrow \Bbb R$.

\begin{defn}
We define $BV_{p}([a,b])=\{ f:[a,b] \rightarrow {\Bbb R} : V_{p}(f,[a,b])< + \infty \}$, i.e. $BV_{p}([a,b])$ is the space of functions of 
$p$-bounded variation on $[a,b]$.
\end{defn} 

\begin{defn}
When $p =1$, the variation $V_{1}(f,[a,b])$ is the Jordan variation, $V(f, [a,b])$, of $f$ on $[a,b]$. In particular, the space 
$BV_{1}([a,b])=\{ f:[a,b] \rightarrow {\Bbb R} : V(f,[a,b])< + \infty \}$ is the space of functions of bounded  Jordan variation on $[a,b]$ 
and it is simply denoted by $BV([a,b])$.
\end{defn}

\begin{rmk}
It is well-known  (\cite{ABM}, \cite{ABK}, \cite{AZ}) that the space $BV_{p}([a,b])$, $p \geq 1$, endowed with the norm 
$\Vert f\Vert _{BV_{p}}=\vert f(a)\vert +V_{p}(f,[a,b])^{\frac{1}{p}}$ 
is a Banach space. In particular, the space $BV([a,b])$ endowed with the norm $\Vert f\Vert _{BV}=\vert f(a)\vert +V(f,[a,b])$ 
is a Banach space.
Moreover, for $1 < p < q < \infty$, the following (strict) inclusions hold
\[BV([a,b]) \subset BV_{p}([a,b]) \subset BV_{q}([a,b]) \subset B([a,b])\]
where $B([a,b])=\{f:  [a,b] \rightarrow {\Bbb R}; f \text{ is bounded}\}$.
\end{rmk}

As it is well-known, the space $BV([a,b])$ is not closed under composition. For example, take $[a,b] =[0,1]$ and 
$f = g \circ h$, where $g(x) = \sqrt{x}$ and $h(x)$ is defined as
\[h(x) = \left\{ \begin{array}{ll}
0  & \mbox{if } x =0\\ 
x^{2} {\sin}^{2}(\frac{1}{x}) & otherwise.\\
\end{array}
\right.\]

In the case of continuous functions, we have the following definition.

\begin{defn}
Let $f$ be continuous on $[a,b]$. Let $G$ be the union of all open subintervals of $(a,b)$ on which $f$ is either strictly monotonic or constant. 
The set of points of varying monotonicity of $f$ is 
defined as \[K_f=[a,b]\setminus G .\]
\end{defn}

\begin{thm} (\cite{LP}: Theorem 2.3) \label{THMLP}
For every $f \in C([a,b])$ and $p \geq 1$ we have \[ V_{p}(f,K_{f})=V_{p}(f,[a,b]).\]
\end{thm}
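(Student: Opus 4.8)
The plan is to prove the two inequalities separately. Since $K_f \subseteq [a,b]$, every finite system of non-overlapping intervals with endpoints in $K_f$ is also admissible for $[a,b]$, so $V_p(f,K_f) \le V_p(f,[a,b])$ is immediate; the whole content lies in the reverse inequality. Because both $K_f$ and $[a,b]$ have a least element ($a$) and a greatest element ($b$), by the Remark above I may compute each $p$-variation as a supremum over partitions: $V_p(f,[a,b])$ over all $a=t_0<\dots<t_n=b$, and $V_p(f,K_f)$ over those partitions whose points all lie in $K_f$. Thus it suffices to show that every partition of $[a,b]$ can be replaced by a partition with points in $K_f$ whose associated sum is at least as large.

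First I would record the structure of $G=[a,b]\setminus K_f$. Being a union of open intervals, $G$ is open, hence a countable disjoint union of its components $(c_k,d_k)$; by maximality each endpoint $c_k,d_k$ (together with $a$ and $b$) lies in $K_f$. The key preparatory step is a lemma: on each component $(c_k,d_k)$ the function $f$ is strictly monotone or constant, in particular monotone. To see this I would cover $(c_k,d_k)$ by the defining open intervals of $G$ (each contained in the component by maximality) and split the component into the three open sets $U_+,U_-,U_0$ of points admitting a neighborhood on which $f$ is, respectively, strictly increasing, strictly decreasing, or constant. These are pairwise disjoint, since two incompatible local behaviors cannot coexist on the overlap of two neighborhoods, and they cover $(c_k,d_k)$; connectedness then forces exactly one of them to be the whole component, and a standard compactness and chaining argument upgrades the local behavior to a global one.

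The heart of the proof is the replacement step. Given a partition of $[a,b]$, I would treat separately each maximal block $t_i<\dots<t_{i+m}$ of partition points falling inside a single component $(c,d)$, with immediate neighbors $t_{i-1}\le c$ and $t_{i+m+1}\ge d$. Assuming (by passing to $-f$ if necessary) that $f$ is non-decreasing on $(c,d)$, the superadditivity $\sum \alpha_l^p \le (\sum \alpha_l)^p$, valid for $p\ge1$ and $\alpha_l\ge0$, bounds the internal terms by $(f(t_{i+m})-f(t_i))^p$, so the contribution of the block is at most
\[
\Psi(v_0,v_m)=|v_0-f(t_{i-1})|^p+|v_m-v_0|^p+|f(t_{i+m+1})-v_m|^p,
\]
where $v_0=f(t_i)$, $v_m=f(t_{i+m})$ and $f(c)\le v_0\le v_m\le f(d)$. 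The function $\Psi$ is convex in $(v_0,v_m)$ on the triangle $\{f(c)\le v_0\le v_m\le f(d)\}$, hence attains its maximum at a vertex; the three vertices $(f(c),f(d))$, $(f(c),f(c))$, $(f(d),f(d))$ correspond exactly to inserting $\{c,d\}$, $\{c\}$, or $\{d\}$ in place of the block. Choosing the best vertex replaces the block by points of $K_f$ without decreasing the sum. Carrying this out for every component met by the partition (the components being disjoint, the blocks do not interfere) produces a $K_f$-partition with at least the same sum, giving $V_p(f,[a,b]) \le V_p(f,K_f)$ and hence equality.

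I expect the monotonicity lemma and the convexity bookkeeping at the block boundaries to be the main obstacles: the naive idea of collapsing a block directly onto both endpoints $c,d$ can in fact decrease the boundary terms, and it is precisely the joint convexity of $\Psi$, allowing the choice among $\{c,d\}$, $\{c\}$ and $\{d\}$, that repairs this, while the hypothesis $p\ge1$ is what makes both the superadditivity and the convexity available.
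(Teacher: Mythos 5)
The survey does not actually prove this theorem: it is quoted from Laczkovich--Preiss (\cite{LP}, Theorem 2.3) with no argument supplied, so there is no in-paper proof to compare yours against; I can only judge your argument on its own terms, and it is correct. The easy inequality $V_p(f,K_f)\le V_p(f,[a,b])$, the decomposition of $G=[a,b]\setminus K_f$ into open components on whose closure $f$ is monotone (your $U_+,U_-,U_0$ connectedness argument is the standard way to obtain this, and the component endpoints do lie in $K_f$), the superadditivity bound $\sum_l \alpha_l^p\le\bigl(\sum_l \alpha_l\bigr)^p$ for the internal terms of a block, and the maximization of the convex function $\Psi$ over the triangle $\{(v_0,v_m): f(c)\le v_0\le v_m\le f(d)\}$ --- choosing the best of the insertions $\{c,d\}$, $\{c\}$, $\{d\}$ --- all hold up. In particular, you correctly identified the genuine trap: collapsing a block onto \emph{both} endpoints can strictly decrease the sum (e.g.\ $f$ increasing on $(c,d)$, $f(t_{i-1})<f(c)$, block value near $f(d)$), and the vertex argument is exactly what repairs it.

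One point you should make explicit in the write-up: the blocks must be processed one at a time, not simultaneously. If two blocks are adjacent in the partition, the term joining them is a boundary term of \emph{both}, so simultaneous replacement couples the two optimizations; ``the components are disjoint'' does not by itself dispose of this. The repair is immediate and costs one sentence: each single-block replacement is an inequality valid for arbitrary fixed neighbor points lying outside the component in question, and the inserted points $c,d$ belong to $K_f$, hence never to any component, so iterating over the finitely many blocks of the current partition terminates in a $K_f$-partition whose sum is at least the original one. Similarly, it is worth noting that the degenerate cases $t_{i-1}=c$ or $t_{i+m+1}=d$ (an inserted point duplicating an existing one) are harmless, since the corresponding term vanishes.
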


\noindent
Let $CBV_{p}([a,b]) = \{ f \in C([a,b]): V_{p}(f,K_{f})<\infty\}$.  

\begin{cor} (\cite{LP}: Corollary 2.4)
If $p \geq 1$ then $CBV_{p}([a,b])$ is the family of those $f \in C([a,b])$ for which $V_{p}(f,[a,b])<+\infty$, that is $CBV_p([a,b])=C([a,b]) \cap BV_p([a,b]).$
\end{cor}

\noindent
As remarked in \cite{LP}, no analogous statement to Theorem \ref{THMLP} holds if $0 < p < 1$ since the only continuous functions $f$ with $V_{p}(f,[a,b])<+\infty$ 
are constant. On the other hand, $CBV_{p}([a,b])$ contains, for example, the continuous, strictly monotone functions on $[a,b]$.\\

Now, we introduce another type of variation: the Riesz variation.

\begin{defn}
Let $\mathcal P$ be the family of all partitions of the interval $[a,b]$. Given a real number $p \geq 1$, a partition $P=\{ t_0,...t_m\}$ of $[a,b]$, 
and a function $f:[a,b] \rightarrow \Bbb R$, the non-negative real number \[ RV_p (f,P)=RV_p(f,P,[a,b])= \sum_{j=1}^{m} \dfrac{\vert f(t_j)-f(t_{j-1}) \vert ^p}{(t_j - t_{j-1})^{p-1}} \] 
is called the Riesz variation of $f$ on $[a,b]$ with respect to $P.$ The (possibly infinite) number \[ RV_p(f)=RV_p(f,[a,b])= \sup \{ RV_p(f,P,[a,b]) : P \in {\mathcal P} \}, \] 
where the supremum is taken over all the partitions of $[a,b]$, is called the total Riesz variation of $f$ on $[a,b]$. In case $RV_p(f) < \infty$ we say that $f$ has bounded 
Riesz variation (or bounded $p$-variation in Riesz' sense) on $[a,b]$, and we write $f \in RBV_p([a,b])$.
\end{defn}

\begin{rmk}
It is well-known (\cite{ABM}, \cite{ABK}) that the space $RBV_p([a,b])$ equipped with the norm $\Vert f\Vert _{RBV_p}= \vert f(a) \vert + RV_p (f)^{\frac{1}{p}}$ is 
a Banach space. 
\end{rmk}

By $AC([a,b])$, we denote the space of all absolutely continuous functions on $[a,b]$.
Moreover, $AC([a,b])$ is closed in $BV([a,b])$, and therefore it is a Banach space with respect to the $BV$-norm, which is equivalent to the  $W^{1,1}$-norm
\[\Vert f \Vert_{W^{1,1}} = {\Vert f \Vert}_{L^{1}} +   {\Vert f^{'}\Vert}_{L^{1}}.\]
In fact, it coincides with $W^{1,1}([a,b])$  (see, for instance, \cite{ABM}: Proposition 3.24;  \cite{ABK}: page 10, 1.1.16).

Recall the following useful (strict) inclusions.  
\begin{enumerate}
\item{For $1 < q < p < + \infty$,
\begin{align*}
Lip([a,b]) &  \subset    W^{1,p}([a,b]) \subset W^{1,q}([a,b]) \\
& \subset   AC([a,b]) \subset C([a,b]) \cap BV([a,b])\\
&  \subset  B([a,b]).\\
\end{align*}}
\item{Let $0 < \gamma < 1$. Then 
\[Lip_{\gamma}([a,b]) \not\subseteq BV([a,b]).\]
Moreover, there exists $f \in \cap_{0 < \gamma < 1} Lip_{\gamma}([a,b]) \setminus BV([a,b])$ (\cite{ABM}: Example 1.23 and Example 1.24).}
\end{enumerate}

\noindent
Functions of bounded Riesz variation are particularly interesting since they are related to 
Sobolev spaces: the space $RBV_p([a,b])$ is basically the same 
as the space $W^{1,p}([a,b])$, by the following well-known theorem. 

\begin{thm}[Riesz Theorem] (\cite{ABK}: Theorem 1.3.5) \label{theoRV} Let $1<p< \infty$. A function $f:[a,b] \rightarrow \Bbb R$ 
belongs to $RBV_p([a,b])$ if and only if $f \in AC([a,b])$ 
and $f' \in L^p([a,b]).$ Moreover, in this case the equality \[ RV_p(f)= \Vert f' \Vert ^p_{L^p([a,b])}= \int _{a} ^{b}  \vert f'(t) \vert ^p dt \] holds, 
where $RV_p (f)$ is the $p$-variation of $f$ in Riesz' sense.
\end{thm}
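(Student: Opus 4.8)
The plan is to prove the two implications separately and to recover the norm identity by combining the resulting inequalities. I start with the easier direction, assuming $f \in AC([a,b])$, so that $f(t_j)-f(t_{j-1}) = \int_{t_{j-1}}^{t_j} f'$ for every subinterval of a partition $P = \{t_0, \dots, t_m\}$. Writing $q = p/(p-1)$ for the conjugate exponent and applying H\"older's inequality on each interval gives $|f(t_j)-f(t_{j-1})| \le \left(\int_{t_{j-1}}^{t_j}|f'|^p\right)^{1/p}(t_j-t_{j-1})^{1/q}$; raising to the $p$-th power and using $p/q = p-1$ yields the pointwise estimate $\vert f(t_j)-f(t_{j-1})\vert^p/(t_j-t_{j-1})^{p-1} \le \int_{t_{j-1}}^{t_j}|f'|^p$. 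Summing over $j$ and taking the supremum over all partitions immediately produces $RV_p(f) \le \Vert f'\Vert^p_{L^p([a,b])}$, which in particular shows $f \in RBV_p([a,b])$ whenever $f' \in L^p([a,b])$.

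For the converse I would first extract absolute continuity directly from the finiteness of $RV_p(f)$. Given any finite family of non-overlapping intervals $[a_k, b_k]$, the same H\"older trick applied to the sum $\sum_k |f(b_k)-f(a_k)|$---this time factoring each term as $\frac{|f(b_k)-f(a_k)|}{(b_k-a_k)^{(p-1)/p}}(b_k-a_k)^{(p-1)/p}$---gives $\sum_k |f(b_k)-f(a_k)| \le RV_p(f)^{1/p}\left(\sum_k (b_k-a_k)\right)^{(p-1)/p}$. Since $p > 1$, the exponent $(p-1)/p$ is strictly positive, so the right-hand side tends to $0$ as the total length of the intervals shrinks; this is precisely the definition of absolute continuity. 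Consequently $f \in AC([a,b])$, its derivative $f'$ exists almost everywhere and is integrable, and the fundamental theorem of calculus applies.

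It remains to establish $\Vert f'\Vert^p_{L^p([a,b])} \le RV_p(f)$, which I regard as the main obstacle. The idea is to approximate $f'$ by the step functions $\phi_n$ that equal the average $\frac{1}{t_j - t_{j-1}}\int_{t_{j-1}}^{t_j} f'$ on each interval of the $n$-th dyadic partition $P_n$ of $[a,b]$. A direct computation identifies $\int_a^b |\phi_n|^p$ with $RV_p(f, P_n)$, so each such integral is bounded above by $RV_p(f)$. The delicate point is passing to the limit: viewing $\phi_n$ as the conditional expectation of $f'$ with respect to the dyadic $\sigma$-algebra (alternatively, invoking the Lebesgue differentiation theorem), one obtains $\phi_n \to f'$ almost everywhere. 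Fatou's lemma then yields $\int_a^b |f'|^p \le \liminf_n \int_a^b |\phi_n|^p = \liminf_n RV_p(f,P_n) \le RV_p(f)$, which simultaneously shows $f' \in L^p([a,b])$ and furnishes the reverse inequality. Combined with the estimate of the first paragraph, this gives the identity $RV_p(f) = \Vert f'\Vert^p_{L^p([a,b])}$ and completes the equivalence. The steps most in need of care are the justification of the almost-everywhere convergence $\phi_n \to f'$ and the verification that Fatou's lemma is applied in the correct direction.
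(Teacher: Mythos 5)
Your proof is correct. Note that the paper itself offers no proof of this statement: it is quoted as a classical result (Riesz's theorem, cited as Theorem 1.3.5 of the reference \cite{ABK}), so the only meaningful comparison is with the standard argument from the literature --- and yours is essentially that argument: H\"older's inequality on each subinterval for the direction $AC$ plus $f' \in L^p$ implies $RV_p(f) \le \Vert f'\Vert_{L^p}^p$, and, conversely, absolute continuity extracted from finiteness of $RV_p(f)$ followed by a Fatou argument on the difference-quotient step functions to get $\Vert f'\Vert_{L^p}^p \le RV_p(f)$. Two small points you should make explicit, though both are routine: (i) when you bound $\sum_k \vert f(b_k)-f(a_k)\vert^p/(b_k-a_k)^{p-1}$ by $RV_p(f)$ for a family of non-overlapping intervals, you must complete the family to a full partition of $[a,b]$, observing that the added gap terms are non-negative, since $RV_p$ is defined as a supremum over partitions; (ii) the almost-everywhere convergence $\phi_n \to f'$ does not actually require conditional expectations or Lebesgue points --- at any point $x$ where $f'(x)$ exists (almost every $x$, since $f \in AC$), the dyadic difference quotient over the interval $[a_n,b_n] \ni x$ is the convex combination $\frac{b_n-x}{b_n-a_n}\cdot\frac{f(b_n)-f(x)}{b_n-x}+\frac{x-a_n}{b_n-a_n}\cdot\frac{f(x)-f(a_n)}{x-a_n}$ of two quotients each tending to $f'(x)$, so it converges to $f'(x)$ directly. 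With these remarks inserted, the argument is complete, and Fatou's lemma is indeed applied in the correct direction.
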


\noindent
For $p=1$, $RBV_1([a,b])=BV([a,b])$. Hence, the previous theorem does not hold for $p=1$ as a function in 
$BV([a,b])$ usually does not need to be continuous and therefore nor absolutely continuous.

\subsection{Baire functions}

\indent
Let $X$ be a Polish space, that is a  separable and completely metrizable space. 
Recall that an $F_{\sigma}$ set is a countable union of closed sets, a $G_{\delta}$ set is a countable intersection  of open sets, and a $G_{\delta \sigma}$ 
set is a countable union of $G_{\delta}$ sets (\cite{BBT}). In every metrizable space, any open set is an $F_{\sigma}$ set (\cite{AK}). \\
A real valued function $g: X \rightarrow {\Bbb R}$ is said to be {\it Baire one} if there exists a sequence $\{g_{k}\}_{k \in {\Bbb N}}$ of continuous functions 
$g_{k}: X \rightarrow  {\Bbb R}$ such that $\lim_{k \rightarrow + \infty} g_{k}(x) = g(x)$, for every $x \in X$.  These functions are so called since they were first defined and studied 
by Baire (\cite{BAI}). Clearly, each continuous function is of Baire class one. \\
In general, a real valued function $g: X \rightarrow {\Bbb R}$ is said to be of {\it Baire class $n$}, $n \in \Bbb N$, if there exists a sequence $\{g_{k}\}_{k \in {\Bbb N}}$ of functions of 
Baire class $n-1$, $g_{k}: X \rightarrow  {\Bbb R}$, such that $\lim_{k \rightarrow + \infty} g_{k}(x) = g(x)$, for every $x \in X$. \\
Denote by ${\mathcal B}_{0}(X)$ the collection of real valued continuous functions on $X$, that is  ${\mathcal B}_{0}(X) = C(X)$, 
and by ${\mathcal B}_{n}(X)$, $n \geq 1$, the collection of real valued Baire $n$ functions on $X$.  \\
Then, the following (strict) inclusions hold: 
\[C(X) = {\mathcal B}_{0}(X)  \subset {\mathcal B}_{1}(X) \subset \cdots \subset {\mathcal B}_{n}(X) \subset  {\mathcal B}_{n +1}(X) \subset \cdots.\] 
Several equivalent definitions of Baire class one functions have been obtained already: it is well-known that ``$g$ is Baire one if 
and only if for every open set $A$, $g^{-1}(A)$ is an $F_{\sigma}$ set", and that ``$g$ is Baire two if and only if for every open set $A$, $g^{-1}(A)$ is a $G_{\delta \sigma}$
set" (see, for instance, \cite{KE} and \cite{Z}).\\
Given $g: X \rightarrow {\Bbb R}$, the following are equivalent: 
\begin{enumerate}
\item{$g$ is Baire one;}
 \item{for every open subset $A$ of ${\Bbb R}$, $g^{-1}(A)$ is an $F_{\sigma}$ set;}
\item{for every closed set $C$ in $X$, the restriction $g_{|C}$ has a point of continuity in $C$.}
\end{enumerate} 
\noindent
Clearly, if a function $g: X \rightarrow {\Bbb R}$ has countably many discontinuity points then it is Baire one. In particular, if 
$g: [a, b]  \rightarrow {\Bbb R}$ is monotone, or of bounded variation, then $g$ is Baire one. In general, functions of Baire class one 
play an important role in applications. For example, semi-continuous functions and derived functions, all belong to this class (\cite{BBT}, \cite{KU}). 
Some interesting, very recent results concerning fixed points of Baire 
functions and the so called equi-Baire property can be found in \cite{AL0} and \cite{AL}.\\
If $g$ is Baire one, then the set of points of continuity of $g$ is a residual subset of $X$. This last property is not a characterisation 
as the following example shows (\cite{NAT}: page 148, Example IV).  

\begin{exmp}
Let $X=[0,1]$. Let $C$ be the Cantor ternary set. The set $C$ has Lebesgue measure zero and is of first category since it is nowhere dense. Let $C_{0}$ be the collection 
of the points of $P$ which are not endpoints of the complementary intervals. Let $f = {\chi}_{C}$ and $g= {\chi}_{C_{0}}$.  Then $f$ and $g$ are continuous at points of 
$[0, 1] \setminus C$ and discontinuous at points of $C$. But $f$ is Baire one as it is the characteristic function of a closed set but $g$ is not Baire one as $g_{|C}$ is 
discontinuous at every point. 
\end{exmp}
\noindent
Another well-known example of non Baire one functions is the Dirichlet function. 

\begin{exmp} 
Let $X= [0,1]$.  The Dirichlet function is the map $g(\cdot) = {\chi}_{{\Bbb Q} \cap [0,1]}(\cdot)$. List all the rationals in $[0,1]$ as $r_{1}$, $r_{2}$, $\dots$, $r_{k}$, $\dots$. 
Define, for each $n \in {\Bbb N}$, 
\[g_{n}(x) = \left\{ \begin{array}{ll}
1  & \mbox{if } x \in \{r_{1}, \cdots, r_{n}\} \\ 
0 & otherwise.\\
\end{array}
\right. \]
As $g_{n}$ has finitely many discontinuity points, it is of Baire class one. The Dirichlet map is the pointwise limit of the sequence $\{g_{n}\}_{n \in {\Bbb N}}$. 
So, it is Baire two but not Baire one.    
\end{exmp}

\noindent
The following is a beautiful, natural characterisation of a Baire one function. 
\begin{thm} (\cite{LTZ}: Theorem 1)  \label{THMLEC0}
Suppose $f: X \rightarrow Y$ is a mapping between complete separable metric spaces $(X, d_{X})$ and $(Y, d_{Y})$. Then the following statements are equivalent.
\begin{enumerate}
\item{For any $\epsilon >0$, there exists a positive function $\delta$ on $X$ such that 
\break
$d_{Y}(f(x), f(y) ) < \epsilon$ whenever $d_{X} (x,y) < \min \{\delta(x), \delta(y)\}$.}
\item{The function $f$ is of Baire class one.}
\end{enumerate}
\end{thm}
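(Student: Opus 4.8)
The plan is to establish the two implications separately, using throughout the characterisation recalled just before the statement: a map into a complete separable metric space is of Baire class one if and only if its restriction to every nonempty closed subset has a point of continuity (equivalently, the points of continuity of $f_{|C}$ form a dense $G_\delta$ in $C$ for every closed $C$). I also intend to use the metric-valued analogue of the $F_\sigma$-preimage criterion, namely that $f$ is Baire one as soon as, for every $\epsilon>0$, the space $X$ admits a countable cover by \emph{closed} sets $\{F_i\}$ with $\mathrm{diam}\,f(F_i)\le\epsilon$; this last reduction is routine, since writing an open $U\subseteq Y$ as an increasing union of sets $G_k$ with $\mathrm{dist}(G_k,Y\setminus U)\ge 2/k$ and using a closed $(1/k)$-cover shows that $f^{-1}(U)$ is a countable union of those cover pieces that meet $f^{-1}(G_k)$, hence $F_\sigma$.

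For $(2)\Rightarrow(1)$ I would construct $\delta$ by a transfinite (Cantor--Bendixson) layering. Fix $\epsilon>0$, set $C_0=X$, and given a nonempty closed $C_\alpha$ let $U_\alpha=\{x\in C_\alpha:\text{$f_{|C_\alpha}$ has oscillation $\le\epsilon$ at }x\}$, which is relatively open and, by the continuity-point property applied to $f_{|C_\alpha}$, dense in $C_\alpha$; put $C_{\alpha+1}=C_\alpha\setminus U_\alpha$ and take intersections at limits. A strictly decreasing chain of closed sets in a second countable space has countable length, so the process terminates with $C_\beta=\emptyset$, and the $U_\alpha$ partition $X$. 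For $x\in U_\alpha$ I choose an open $\widetilde W_x\subseteq X$ with $\mathrm{diam}\,f(\widetilde W_x\cap C_\alpha)\le\epsilon$ and define $\delta(x)$ so small that $B(x,\delta(x))\subseteq\widetilde W_x$. The verification is then immediate: if $x\in U_\alpha$, $y\in U_{\alpha'}$ and $d_X(x,y)<\min\{\delta(x),\delta(y)\}$, then assuming $\alpha\le\alpha'$ one has $y\in C_{\alpha'}\subseteq C_\alpha$ and $y\in\widetilde W_x$, so $x,y\in\widetilde W_x\cap C_\alpha$ and $d_Y(f(x),f(y))\le\epsilon$ (replacing $\epsilon$ by $\epsilon/2$ upstream if a strict inequality is wanted). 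The key point is that taking the minimum of the two gauges forces both points into the \emph{earliest} layer containing either of them, where a single small-oscillation neighbourhood controls them simultaneously.

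For $(1)\Rightarrow(2)$ the target is, for each $\epsilon$, the countable closed cover of small $f$-diameter described above; equivalently I aim to show directly that $f_{|C}$ has a point of continuity for every closed $C$. The natural pieces are the super-level sets $A_n=\{x:\delta(x)>1/n\}$ of the gauge for $\epsilon$: since $C=\bigcup_n(A_n\cap C)$ and $C$ is a Baire space, some $A_n\cap C$ is dense in a nonempty relatively open $O\subseteq C$ of diameter $<1/n$, and on $A_n\cap O$ the gauge condition gives $\mathrm{diam}\,f(A_n\cap O)\le\epsilon$. The crux of the proof is to upgrade this to a genuinely closed, controlled piece. Here lies the main obstacle: $A_n$ need not have interior (it may be merely dense, as for the ``large-gauge'' set of an arbitrary positive $\delta$), so one cannot simply place an open set inside the controlled region; and at a limit point $u\in\overline{A_n\cap O}$ the gauge controls $f(u)$ only when $u$ itself lies in $A_n$, while the ``small-gauge'' boundary points $u\in\overline{O}\setminus A_n$ are entirely uncontrolled.

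My plan to defeat this obstacle is a transfinite peeling argument running in parallel with the one above, but now driven by the gauge rather than by assumed continuity points. At each stage I peel off from the current closed set $F$ a relatively open set $O$ on which some $A_n\cap F$ is dense, record the closed set $\overline{O}^{F}$ together with the observation that its intersection with $A_n$ has $f$-diameter at most $3\epsilon$ (limit points lying in $A_n$ are controlled by the gauge), and relegate the remaining boundary points, which necessarily have $\delta\le1/n$ and hence belong only to strictly higher-index super-level sets, to later and finer stages of the peeling. Second countability again forces termination at a countable ordinal, after which the recorded closed pieces assemble into the desired countable cover of $X$ by sets of $f$-diameter at most $3\epsilon$ (rescale $\epsilon$ to taste), whence $f$ is Baire one. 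The delicate bookkeeping showing that every point is eventually absorbed into a recorded closed piece of small $f$-diameter — precisely the step where the small-gauge residue must be controlled by descending through the index $n$ — is where I expect the real work, and the whole substance, of the theorem to reside.
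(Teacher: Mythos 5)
Your direction $(2)\Rightarrow(1)$ is correct: the transfinite oscillation peeling (remove from each closed layer the relatively open set of small-oscillation points of the restriction), termination at a countable ordinal by second countability, and the verification that the minimum of the two gauges forces both points into the earlier layer, is exactly the standard argument and is essentially the one in the source the paper cites for this theorem (the survey itself gives no proof, only the reference). One cosmetic repair: define the layers with strict inequality, $U_\alpha=\{x\in C_\alpha:\mathrm{osc}(f_{|C_\alpha})(x)<\epsilon\}$, since oscillation is upper semicontinuous and it is this set, not the one with $\le\epsilon$, that is relatively open.

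The direction $(1)\Rightarrow(2)$, however, has a genuine gap, and it stems from a false premise. Your reduction to ``for each $\epsilon$ there is a countable closed cover of $X$ by sets of small $f$-diameter'' is fine, but you never construct such a cover: you explicitly defer the ``delicate bookkeeping'' where you say the real work resides, so the proof is incomplete as written. Moreover, the obstacle that pushes you into the transfinite peeling is illusory. You assert that a limit point $u\in\overline{A_n\cap O}\setminus A_n$ is ``entirely uncontrolled''; it is not, because $\delta(u)>0$. Indeed, if $x_j\in A_n\cap B(q,1/(2n))$ and $x_j\rightarrow u$, then for large $j$ one has $d_X(u,x_j)<\min\{\delta(u),1/n\}\le\min\{\delta(u),\delta(x_j)\}$, whence $d_Y(f(u),f(x_j))<\epsilon$: the gauge hypothesis is symmetric and only requires the distance to fall below \emph{both} gauges, which the positivity of $\delta(u)$ guarantees in the limit. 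Consequently, taking a countable dense set $Q\subseteq X$ and $F_{n,q}=\overline{A_n\cap B(q,1/(2n))}$ for $n\ge 1$, $q\in Q$, one gets $\mathrm{diam}\, f(F_{n,q})\le 3\epsilon$ (compare two closure points to nearby points of $A_n$, and those to each other), and these closed sets cover $X$ because the $A_n$ do. Combined with your own reduction this finishes $(1)\Rightarrow(2)$ in a few lines, with no Baire category argument and no transfinite recursion at all. By contrast, the peeling scheme you sketch is not only unnecessary but would need serious repair: the points of $O\setminus A_n$ that you ``relegate to later stages'' are removed from the active closed set when $O$ is peeled off, so no later stage of the recursion ever covers them.
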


\begin{rmk} \label{RMKLEC}
The function $\delta$ of Theorem  \ref{THMLEC0} can be chosen to be Baire one as shown in Corollary 33 of \cite{LEC}.
\end{rmk}

In the sequel, sometimes, when understood, in the above mentioned spaces, we omit $X$ (we write, for example, ${\mathcal B}_{1}$ rather than ${\mathcal B_{1}(X)}$). 

\section{Two types of non-linear operators: (left) composition operators and  superposition operators}

\indent
Recall that an operator between two normed spaces is said to be {\it  bounded} if it maps bounded sets into bounded sets. 
Clearly, unlike the case of linear operators, in the non-linear case, the two properties of being  bounded and being continuous are not equivalent. 
They are not even, in general, related:  a non-linear operator may be continuous without being bounded, 
or bounded without being continuous. 

\begin{defn}
Let $J$ be an arbitrary interval. Let $f:{\Bbb R} \rightarrow \Bbb R$. The operator \[g \mapsto f \circ g \] where $g:J \rightarrow \Bbb R$ is an arbitrary function, is called 
the {\it (autonomous) composition operator generated by the function $f$}. It is usually denoted by $C_f $. Hence, for each $g:J \rightarrow \Bbb R$, 
$C_{f}(g): J \rightarrow {\Bbb R}$, is defined as $C_{f}(g)(\cdot) = f(g(\cdot))$.
\end{defn}

\begin{defn}
Let $J$ be an arbitrary interval. Let $h:J \times {\Bbb R} \rightarrow \Bbb R$. The operator 
\[g (\cdot) \mapsto h (\cdot,  g( \cdot)), \] 
where $g:J \rightarrow \Bbb R$ is an arbitrary function, is called the {\it (non-autonomous) superposition operator generated 
by the function $h$}. It is usually denoted by $S_h $. 
Hence, for each $g:J \rightarrow \Bbb R$, $S_{h}(g): J \rightarrow {\Bbb R}$ is defined as $S_{h}(g)(\cdot) = h(\cdot, g(\cdot))$.
\end{defn}

\subsection{Composition Operators}

\paragraph{Composition operators on Lipschitz functions and some spaces of functions of bounded variation}

\begin{thm}(\cite{ABM}: Theorem 5.9)
The operator $C_f$ maps the space $BV([a,b])$ into itself if and only if the corresponding function $f$ is locally Lipschitz on $\Bbb R$, 
i.e. for each $r>0$, there exists $k(r)>0$ such that
 \[ (\star) \hspace{3cm} \vert f(u)-f(v) \vert \leq k(r) \vert u-v\vert, \hspace{0,3 cm} (u,v \in {\Bbb R}, \vert u \vert , \vert v \vert \leq r).\]
\end{thm}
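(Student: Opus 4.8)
The plan is to prove the two implications separately: sufficiency is a short telescoping estimate, while necessity requires constructing an explicit counterexample, and that is where all the work lies.

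\medskip
\noindent\emph{Sufficiency.} Suppose $f$ is locally Lipschitz and let $g \in BV([a,b])$. Since $BV([a,b]) \subset B([a,b])$, the function $g$ is bounded; set $r = \sup_{t \in [a,b]} |g(t)|$, so that $g$ takes values in $[-r,r]$. Applying $(\star)$ with this $r$, for every partition $a = t_0 < t_1 < \cdots < t_n = b$ we would get
\[ \sum_{i=1}^{n} |f(g(t_i)) - f(g(t_{i-1}))| \leq k(r) \sum_{i=1}^{n} |g(t_i) - g(t_{i-1})| \leq k(r)\, V(g,[a,b]). \]
Taking the supremum over all partitions yields $V(f\circ g,[a,b]) \leq k(r)\, V(g,[a,b]) < +\infty$, so $f \circ g \in BV([a,b])$. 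This is the routine direction.

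\medskip
\noindent\emph{Necessity.} I would argue by contraposition: assuming $f$ is \emph{not} locally Lipschitz, I would produce some $g \in BV([a,b])$ with $f \circ g \notin BV([a,b])$. The failure of $(\star)$ gives a radius $r>0$ such that for each $n \in \mathbb{N}$ there exist $u_n, v_n \in [-r,r]$, $u_n \neq v_n$, with $|f(u_n)-f(v_n)| > n\,|u_n - v_n|$. Write $\beta_n = |u_n - v_n|$. The idea is to let $g$ oscillate between $u_n$ and $v_n$, repeating the $n$-th oscillation $p_n$ times on a small subinterval $I_n$, with the $I_n$ pairwise disjoint inside $[a,b]$. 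Each block then contributes about $2p_n\beta_n$ to $V(g)$ and at least $2p_n |f(u_n)-f(v_n)| > 2p_n\, n\,\beta_n$ to $V(f\circ g)$ (since passing to a refinement only increases variation). Choosing $p_n = \lceil 1/(n^2\beta_n)\rceil$ makes $\sum_n p_n\beta_n$ convergent while $\sum_n p_n\, n\,\beta_n \gtrsim \sum_n 1/n$ diverges, so that $V(g)<\infty$ but $V(f\circ g)=\infty$.

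\medskip
The hard part is the bookkeeping that makes this clean, and it splits into two obstacles. First, the junctions between the infinitely many consecutive blocks each add to $V(g)$ and could by themselves force $V(g)=\infty$. To control them I would pass to a subsequence (relabelling, noting the $n$-th retained pair still satisfies the inequality with constant at least $n$) along which, by compactness of $[-r,r]$ and the fact that $\beta_n \to 0$, both $u_n$ and $v_n$ converge to a common limit $u^*$, chosen sparse enough that $\sum_n \max(|u_n-u^*|,|v_n-u^*|) < \infty$ and $\sum_n \beta_n < \infty$; then the junction terms and the within-block terms both sum to a finite total. Second, obtaining $\beta_n \to 0$ presupposes that $f$ is bounded on $[-r,r]$, since then $n\beta_n < |f(u_n)-f(v_n)| \le 2\sup_{[-r,r]}|f|$. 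If instead $f$ is unbounded on $[-r,r]$, I would bypass the oscillation entirely and use an \emph{impulse} function: pick $w_n \to w^* \in [-r,r]$ with $|w_n - w^*| \le 2^{-n}$ and $|f(w_n)| \to \infty$, and let $g$ equal $w^*$ except at a sequence of distinct points $s_n$ where $g(s_n)=w_n$; then $V(g) \le \sum_n 2\cdot 2^{-n} < \infty$ while $V(f\circ g) = \sum_n 2|f(w_n)-f(w^*)| = \infty$ because the summands do not even tend to $0$. In both cases the resulting $g$ lies in $BV([a,b])$ but $f\circ g$ does not, contradicting the hypothesis; the essential difficulty is precisely this simultaneous arrangement of summability for $V(g)$ and divergence for $V(f\circ g)$.
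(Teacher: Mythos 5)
Your proof is correct; note, though, that the survey states this theorem without proof, quoting it as Theorem 5.9 of \cite{ABM}, so there is no in-paper argument to compare against. Your two-implication argument --- the telescoping estimate for sufficiency, and for necessity the contrapositive construction with oscillation blocks between pairs $(u_n,v_n)$ accumulating at a common limit (handling junctions by subsequence sparsification) together with the separate impulse-function construction when $f$ is unbounded on $[-r,r]$ --- is essentially the classical proof of that cited result, going back to Josephy, so it matches the standard approach rather than departing from it.
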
 
 
\begin{thm} (\cite{ABK}: Theorem 3.4.1; \cite{ABM}: Theorem 5.24) \label{thmautBV}
Let $1 < p < \infty$, $0 < \gamma \leq 1$. The following conditions are equivalent.
\begin{enumerate}
\item[(a)]{The function $f: {\Bbb R}\rightarrow {\Bbb R}$ satisfies the local Lipschitz condition $(\star)$.}
\item[(b)]{The operator $C_f$ maps the space $BV_{p}([a,b])$ into itself.}
\item[(c)]{The operator $C_f$ maps the space $BV([a,b])$ into itself.}
\item[(d)]{The operator $C_f$ maps the space $AC([a,b])$ into itself.}
\item[(e)]{The operator $C_f$ maps the space $RBV_{p}([a,b])$ into itself.}
\item[(f)]{The operator $C_f$ maps the space $Lip_{\gamma}([a,b])$ into itself.}
\end{enumerate}
Moreover, in this case the operator $C_f$ is automatically bounded.
\end{thm}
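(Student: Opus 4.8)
The plan is to treat condition (a) as the hub. Since the characterisation for $BV$ stated just before this theorem (\cite{ABM}: Theorem 5.9) already gives the equivalence (a)$\Leftrightarrow$(c), it suffices to prove that (a) implies each of (b), (d), (e), (f), and conversely that each of (b), (d), (e), (f) forces (a). The boundedness assertion I expect to read off directly from the quantitative estimates produced in the first, easy half.

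For the forward implications I would exploit that all six spaces embed in $B([a,b])$, with the sup-norm controlled by the ambient norm; for instance $\|g\|_\infty \le |g(a)| + V_p(g)^{1/p} = \|g\|_{BV_p}$, and similarly for the others. Thus if $g$ lies in one of these spaces, then $\|g\|_\infty \le r$ for some $r$ depending only on $\|g\|$, and on the range of $g$ the local Lipschitz condition $(\star)$ gives $|f(u)-f(v)|\le k(r)|u-v|$. Inserting this into the defining sums yields at once $V_p(f\circ g)\le k(r)^p V_p(g)$ and $V(f\circ g)\le k(r)V(g)$, which settles (b) and re-proves (c); the same inequality fed into the $\varepsilon$--$\delta$ definition of absolute continuity gives (d). For (e) I would first invoke Theorem~\ref{theoRV} to identify $RBV_p$ with $\{g\in AC: g'\in L^p\}$; then $f\circ g\in AC$ by (d), and at a.e.\ point where both derivatives exist the difference-quotient bound gives $|(f\circ g)'|\le k(r)|g'|$, hence $\|(f\circ g)'\|_{L^p}\le k(r)\|g'\|_{L^p}$. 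Condition (f) is the same one-line estimate applied to the Hölder quotient. Since in each case the seminorm of $f\circ g$ is bounded by a constant depending only on $r$ times the seminorm of $g$, and $f$ is bounded on $[-r,r]$ (being continuous there), bounded sets are mapped to bounded sets, which is the ``moreover'' part.

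For the reverse implications I would argue by contraposition. Assuming $(\star)$ fails, I fix the offending $r>0$ and pick $u_n,v_n\in[-r,r]$ with $u_n\ne v_n$ and $|f(u_n)-f(v_n)|\ge n\,|u_n-v_n|$. A case distinction then seems unavoidable. If $f$ is unbounded on $[-r,r]$, then along a convergent subsequence one can build a single continuous (indeed Lipschitz) function $g$ whose range meets points on which $f$ blows up, so that $f\circ g\notin B([a,b])$ and hence lies in none of the spaces simultaneously. If instead $f$ is bounded on $[-r,r]$, then $|u_n-v_n|\to 0$ and, along a subsequence, $u_n,v_n$ share a common limit $\ell$; this is exactly what permits infinitely many shrinking oscillations while keeping $g$ continuous. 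Concretely I would split $[a,b]$ into intervals $I_n$ accumulating at $b$, let $g$ be the sawtooth oscillating $m_n$ times between $u_n$ and $v_n$ on $I_n$, and tune $m_n$ and $|I_n|$ so that the seminorm of $g$ relevant to the target space converges while that of $f\circ g$ diverges. For $BV$, for example, choosing $m_n|u_n-v_n|\asymp n^{-2}$ gives $V(g)=\sum 2m_n|u_n-v_n|<\infty$, whereas $V(f\circ g)=\sum 2m_n|f(u_n)-f(v_n)|\ge \sum 2n\,m_n|u_n-v_n|\asymp\sum 2n^{-1}=\infty$.

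The hard part will be this reverse construction, and in particular making the single zigzag carry all of the bookkeeping at once: the convergence/divergence balance carries a different exponent for each space (the $p$-variation for (b), the $L^p$-norm of the derivative for (e) via Riesz, and the Hölder ratio $|u_n-v_n|\,(m_n/|I_n|)^\gamma$ for (f)), so I expect to tune $m_n,|I_n|$ separately in each case rather than hope for one universal $g$, and to verify continuity at the accumulation point $b$ (guaranteed by $u_n,v_n\to\ell$) together with the slope control needed to keep $g$ in $Lip_\gamma$ or $RBV_p$. The Hölder case deserves particular care because $Lip_\gamma\not\subseteq BV$, so the estimate there must be run directly on difference quotients rather than imported from the variation computations.
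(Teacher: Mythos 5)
The paper itself offers no proof of this theorem: it is a survey item, quoted with attribution to \cite{ABK} (Theorem 3.4.1) and \cite{ABM} (Theorem 5.24), so there is no in-paper argument to compare against. Your proposal is, in substance, the standard proof from that literature, and it is sound: the forward implications come from the embedding of all six spaces into $B([a,b])$ plus the estimate $\vert f(g(x))-f(g(y))\vert \le k(r)\vert g(x)-g(y)\vert$ on the range of $g$ (which also gives the ``moreover'' boundedness clause, since $f$ is continuous hence bounded on $[-r,r]$), the Riesz case being correctly routed through Theorem \ref{theoRV}; the reverse implications go by contraposition through the sawtooth construction, with the case split according to whether $f$ is bounded on $[-r,r]$. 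Your sample computation for $BV$ and the exponents you indicate for $BV_p$, $RBV_p$ and $Lip_\gamma$ all check out (e.g.\ $A_n := (2m_n\vert u_n-v_n\vert)^p/\vert I_n\vert^{p-1}\asymp n^{-p}$ makes $\sum A_n$ converge and $\sum n^pA_n$ diverge precisely because $p>1$). The loose ends, which you largely flag yourself, are routine but should be said explicitly in a full write-up: the mere convergence $u_n, v_n \to \ell$ does not make the inter-block transition costs $\sum_n \vert v_n - u_{n+1}\vert$ (or their $p$- and Riesz-analogues) finite, so one must pass to a subsequence along which $\vert u_n - \ell\vert$ and $\vert v_n - \ell\vert$ decay geometrically; the number of teeth $m_n$ must be an integer, which the same subsequence extraction accommodates; and in the H\"older case the cross-block estimate is cleanest if $g$ is forced to take the value $\ell$ at the endpoints of every block $I_n$, so that any two points in different blocks are compared through a common endpoint, keeping the global $Lip_\gamma$ seminorm controlled by the supremum of the per-block ones.
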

 
\begin{thm} (\cite{ABK}: Theorem 3.1.7)  Under the hypothesis $(\star)$, the operator $C_f$ is automatically continuous in $BV([a,b])$.
\end{thm}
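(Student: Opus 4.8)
The plan is to prove sequential continuity: fix $g \in BV([a,b])$ and a sequence $g_n \to g$ in the $BV$-norm, and show $\Vert C_f(g_n) - C_f(g)\Vert_{BV} \to 0$. First I would record the reductions the norm gives for free. Since $\Vert h\Vert_{\infty} \le \Vert h\Vert_{BV}$, convergence in $BV$ forces $g_n \to g$ uniformly and $V(g_n - g,[a,b]) \to 0$; moreover $\{g_n\}$ is bounded in $BV$, so there is $r>0$ with $\Vert g_n\Vert_{\infty},\Vert g\Vert_{\infty} \le r$ for all $n$, and I fix the corresponding constant $L = k(r)$ from $(\star)$. By Theorem \ref{thmautBV} we already know $C_f(g_n), C_f(g)\in BV([a,b])$ and that $C_f$ is bounded, so the only issue is the vanishing of the distance. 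The pointwise term is immediate, $\vert f(g_n(a)) - f(g(a))\vert \le L\,\Vert g_n - g\Vert_{\infty} \to 0$, and everything reduces to proving $V(f\circ g_n - f\circ g,[a,b]) \to 0$.

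Write $w_n = g_n - g$, so $V(w_n)\to 0$ and $\Vert w_n\Vert_{\infty}\to 0$. For a partition $a=t_0<\dots<t_m=b$ put $v_i = g(t_i)$, $w_i = w_n(t_i)$, $u_i = g_n(t_i)=v_i+w_i$, and split each increment by inserting the term $f(v_{i-1}+w_i)-f(v_{i-1})$:
\[
[f(u_i)-f(v_i)]-[f(u_{i-1})-f(v_{i-1})] = A_i + B_i,
\]
where $A_i = [f(v_i+w_i)-f(v_i)]-[f(v_{i-1}+w_i)-f(v_{i-1})]$ and $B_i = f(v_{i-1}+w_i)-f(v_{i-1}+w_{i-1})$. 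The term $B_i$ is harmless: $\vert B_i\vert \le L\,\vert w_i-w_{i-1}\vert$, so $\sum_i\vert B_i\vert \le L\,V(w_n)\to 0$. The heart of the matter is the cross-term $R_n := \sup_{P}\sum_i\vert A_i\vert$, for which the two groupings give $\vert A_i\vert \le 2L\min(\vert v_i-v_{i-1}\vert,\vert w_i\vert)$. Showing $R_n\to 0$ is the main obstacle, and it is genuinely delicate: the crude bound $2L\sum_i\min(\vert v_i-v_{i-1}\vert,\Vert w_n\Vert_{\infty})$ need not be small (for a $g$ carrying singular-continuous variation, e.g.\ modelled on the Cantor function, this quantity stays comparable to $V(g)$), so one cannot conclude from $\Vert w_n\Vert_{\infty}\to 0$ alone.

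When $f$ is of class $C^1$ with $f'$ Lipschitz of constant $L_1$, the cross-term is controlled at once: writing $A_i = w_i\int_0^1[f'(v_i+s w_i)-f'(v_{i-1}+s w_i)]\,ds$ yields $\vert A_i\vert \le L_1\,\Vert w_n\Vert_{\infty}\,\vert v_i-v_{i-1}\vert$, hence $R_n \le L_1\,\Vert w_n\Vert_{\infty}\,V(g)\to 0$. The hardest step is removing the smoothness assumption, and here I would exploit the fine structure of $g$ rather than approximate $f$ by smooth functions: a smooth $f_m$ with $\Vert f-f_m\Vert_{\infty}$ small need not have small Lipschitz seminorm of $f-f_m$, so the error $C_{f-f_m}(g_n)-C_{f-f_m}(g)$ is not uniformly controlled in $BV$ and the naive approximation route collapses. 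Instead I would encode $R_n$ through the level-crossing (Banach indicatrix) function of $g$, which is integrable, and combine its integrability with the $L^1$-continuity of translates of the a.e.\ derivative $f'\in L^{\infty}_{\mathrm{loc}}$ of the locally Lipschitz $f$, passing to the limit by dominated convergence. I expect this indicatrix / dominated-convergence estimate — made uniform over all partitions and over the general small perturbations $w_n$, not merely constant shifts — to be the technical core of the proof.
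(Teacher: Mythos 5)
The paper itself gives no proof of this theorem: it is quoted from \cite{ABK} (Theorem 3.1.7), and it rests on Ma\'ckowiak's work (cf.\ \cite{M}) on continuity of composition operators in $BV$ --- a result that settled what had long been open. So the step you defer at the end is not a technicality; it \emph{is} the theorem. Your reductions are correct and cleanly organized: passing to the variation term, the splitting into $A_i$ and $B_i$, the bound $\sum_i \vert B_i\vert \le L\,V(w_n)$, the two-sided bound $\vert A_i\vert \le 2L\min(\vert v_i - v_{i-1}\vert, \vert w_i\vert)$, and the two negative observations (the crude bound stays of size $V(g)$ under partition refinement; smoothing $f$ fails because $Lip(f - f_m)$ need not be small) are all accurate and show good understanding of why the statement is delicate. (A minor slip: arguments such as $v_{i-1}+w_i$ may leave $[-r,r]$, so the constant should be $k(3r)$ or similar.) But the proof stops exactly where the content lies: the claim $R_n \to 0$ is announced as ``the technical core'' with a plan, not established by an argument.

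Moreover, the plan as described runs into a concrete obstruction. The indicatrix/dominated-convergence idea does work for \emph{constant} shifts $w_n \equiv c$ and \emph{continuous} $g$: there $\sum_i \vert A_i\vert \le V((f_c - f)\circ g) \le \int N_g(y)\,\vert f'(y+c)-f'(y)\vert\,dy$, which tends to $0$ by splitting the integral over $\{N_g \le M\}$ and $\{N_g > M\}$ and using $L^1$-continuity of translation. But for non-constant $w_n$ the shift $w_i$ changes from one partition point to the next, so any implementation forces a supremum over shifts \emph{inside} the sum (equivalently, inside the integral), and the natural majorant --- the essential supremum $\omega_\epsilon(y)$ of $\vert f'(y+c) - f'(y)\vert$ over $\vert c\vert \le \epsilon$ --- need not tend to $0$ in $L^1$: take $f' = \chi_A$ with $A$ a fat Cantor set; then $\omega_\epsilon = 1$ almost everywhere on $A$ for every $\epsilon > 0$, since $A$ is nowhere dense but of positive measure. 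So ``made uniform over general small perturbations, not merely constant shifts'' is not a routine strengthening of the constant-shift case; it is precisely where the difficulty is concentrated, and your sketch offers no mechanism to overcome it. In addition, the Banach indicatrix formula you invoke requires $g$ continuous, so for a general $g \in BV$ the jump part must be split off and handled separately. As it stands, the proposal is a correct reduction plus a heuristic whose key step fails as stated; the actual proof in the literature requires a genuinely different, and substantially more involved, argument.
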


\begin{thm} (\cite{ABK}: Theorem 3.4.2)  Under the hypothesis $(\star)$, the operator $C_f$ is automatically continuous in $RBV_{p}([a,b])$, $1 < p < \infty$.
\end{thm}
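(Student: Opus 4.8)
\emph{Proof plan.} My strategy is to transport everything into the Sobolev picture furnished by the Riesz Theorem and to reduce continuity of $C_f$ to an $L^p$-convergence statement for derivatives. By Theorem~\ref{thmautBV} we already know that $C_f$ maps $RBV_p([a,b])$ into itself, so each function occurring below lies in the space and only continuity is at stake. By Theorem~\ref{theoRV}, for $1<p<\infty$ a function belongs to $RBV_p([a,b])$ exactly when it is absolutely continuous with derivative in $L^p([a,b])$, and $RV_p(g)=\Vert g'\Vert_{L^p}^p$; hence $g_n\to g_0$ in $RBV_p$ means $g_n(a)\to g_0(a)$ together with $g_n'\to g_0'$ in $L^p([a,b])$, and it also forces uniform convergence $g_n\to g_0$ on $[a,b]$, so that all the $g_n$ take values in a common interval $[-R,R]$. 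On $[-R,R]$ the hypothesis $(\star)$ supplies a single Lipschitz constant $k(R)$, whence $f$ is differentiable almost everywhere there with $|f'|\le k(R)$. Since $f$ is Lipschitz on $[-R,R]$ and each $g_n$ is absolutely continuous, $f\circ g_n$ is absolutely continuous and the chain rule $(f\circ g_n)'=(f'\circ g_n)\,g_n'$ holds almost everywhere, with the convention that the product vanishes wherever $g_n'=0$; this is legitimate because, $g_n$ being absolutely continuous, the set of $t$ with $g_n'(t)\neq 0$ at which $f'(g_n(t))$ fails to exist is null.

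Because the boundary contribution $|f(g_n(a))-f(g_0(a))|\to 0$ by continuity of $f$, continuity of $C_f$ at $g_0$ reduces to proving
\[
\int_a^b \bigl| (f'\circ g_n)\,g_n' - (f'\circ g_0)\,g_0' \bigr|^p\,dt \longrightarrow 0 .
\]
I would split the integrand by the telescoping identity $(f'\circ g_n)g_n'-(f'\circ g_0)g_0' = (f'\circ g_n)(g_n'-g_0') + \bigl[(f'\circ g_n)-(f'\circ g_0)\bigr]g_0'$. The first summand is harmless: since $|f'\circ g_n|\le k(R)$, its $L^p$-norm is at most $k(R)\Vert g_n'-g_0'\Vert_{L^p}\to 0$. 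The whole difficulty sits in the second summand, and precisely in the fact that $f'$ is merely bounded and measurable, not continuous: although $g_n\to g_0$ uniformly, one cannot conclude $f'(g_n(t))\to f'(g_0(t))$ pointwise, so a direct dominated-convergence argument is unavailable.

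To bypass this I would introduce $C^1$ approximations $f_m$ of $f$ (for instance mollifications), chosen so that on $[-R,R]$ one has $\sup_m\Vert f_m'\Vert_\infty\le k(R)$, $f_m\to f$ uniformly, and $f_m'\to f'$ almost everywhere. Inserting $f_m$ and using the triangle inequality produces three terms. For fixed $m$ the continuity of $f_m'$ makes the middle term $\Vert (f_m'\circ g_n)g_n'-(f_m'\circ g_0)g_0'\Vert_{L^p}$ tend to $0$ as $n\to\infty$, exactly as in the easy case above, now because $f_m'\circ g_n\to f_m'\circ g_0$ uniformly. The term $\Vert[(f_m'\circ g_0)-(f'\circ g_0)]g_0'\Vert_{L^p}$ does not involve $n$ and tends to $0$ as $m\to\infty$ by dominated convergence, since for almost every $t$ with $g_0'(t)\neq 0$ the value $g_0(t)$ lies outside the null set on which $(f_m')$ fails to converge to $f'$. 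The genuine obstacle is the remaining term $\int_a^b |f'(g_n(t))-f_m'(g_n(t))|^p\,|g_n'(t)|^p\,dt$, which must be made small \emph{uniformly in $n$} as $m\to\infty$, and I expect this to be the crux. Writing $\psi_m=|f'-f_m'|^p$, the area formula (change of variables) for absolutely continuous functions rewrites this term as $\int_{-R}^{R}\psi_m(y)\,W_n(y)\,dy$, where $W_n(y)=\sum_{t:\,g_n(t)=y,\;g_n'(t)\neq 0}|g_n'(t)|^{p-1}$ and $\Vert W_n\Vert_{L^1}=RV_p(g_n)$. Since $\psi_m$ is bounded and tends to $0$ almost everywhere, the required uniform smallness follows once one knows that the densities $\{W_n\}$ are uniformly integrable, and it is exactly here that the full $L^p$-convergence $g_n'\to g_0'$ (not mere boundedness) must be exploited, the restriction to $\{g_n'\neq 0\}$ being what rules out the atoms coming from the intervals of constancy of $g_n$. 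Letting $n\to\infty$ first and then $m\to\infty$ yields the desired convergence, and hence the automatic continuity of $C_f$ on $RBV_p([a,b])$.
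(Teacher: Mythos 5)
Your reduction is sound up to its final step: the transport into the Sobolev picture via Riesz's theorem (Theorem \ref{theoRV}), the deduction of uniform convergence of $g_n$ and of a common range $[-R,R]$, the Serrin--Varberg chain rule with the vanishing convention, the mollification $f_m$, and the treatment of two of the three terms are all correct. But the argument is genuinely incomplete at precisely the point you yourself flag as the crux: the uniform-in-$n$ vanishing of $\int\psi_m W_n\,dy$ as $m\to\infty$, i.e.\ the uniform integrability of the family $\{W_n\}$. You assert that this ``follows once one knows'' uniform integrability and that ``the full $L^p$-convergence must be exploited,'' but you offer no argument, and nothing you have established implies it: $\sup_n\Vert W_n\Vert_{L^1}<\infty$ is far weaker than uniform integrability, and you have not shown that the densities $W_n$ converge to $W_0$ pointwise, in measure, or in any other useful sense. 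This is not a routine verification --- it is where the entire difficulty of the theorem (which the paper itself does not prove, deferring to Theorem 3.4.2 of \cite{ABK}) is concentrated. Incidentally, your parenthetical about the restriction to $\{g_n'\neq 0\}$ ``ruling out atoms'' is off target: the measure $A\mapsto\int_{g_n^{-1}(A)}|g_n'|^p\,dt$ is automatically absolutely continuous, since $g_n'=0$ a.e.\ on the preimage of any Lebesgue-null set; the danger is not atoms but concentration of the $W_n$ on sets of small measure, which is exactly what must be excluded.

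The gap is fillable, so the strategy is viable, but closing it requires a real idea that is absent from your proposal. One route: splitting $g_n^{-1}(A)$ into $\{|g_n'|\le\lambda\}$ and $\{|g_n'|>\lambda\}$, and using the uniform integrability of $\{|g_n'|^p\}$ in the $t$-variable (which does follow from $g_n'\to g_0'$ in $L^p$), reduces the problem to the uniform integrability of the Banach indicatrices $N_n(y)=\#\bigl(g_n^{-1}(y)\bigr)$, because $\int_A N_n\,dy=\int_{g_n^{-1}(A)}|g_n'|\,dt$. One then proves $N_n\to N_0$ in $L^1(dy)$: for all but countably many levels $y$ (the local extreme values of $g_0$ form a countable set) uniform convergence together with the intermediate value theorem gives $N_0(y)\le\liminf_n N_n(y)$, while Banach's indicatrix theorem gives $\int N_n\,dy=V(g_n)\to V(g_0)=\int N_0\,dy$ since $\Vert g_n'-g_0'\Vert_{L^1}\to 0$; a Fatou--Scheff\'e argument then upgrades this to $\Vert N_n-N_0\Vert_{L^1}\to 0$, hence uniform integrability. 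Without an argument of this kind (or an appeal to an external result such as the Marcus--Mizel automatic-continuity theorem for superposition operators between Sobolev spaces), what you have written is a correct reduction of the theorem to an unproved lemma, not a proof of the theorem.
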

\noindent
Note that the equivalence between conditions (a) and (d) of Theorem \ref{thmautBV} is a particular case of the following more general result involving Sobolev spaces, 
which follows from Theorem 1 in \cite{MM}:
\begin{thm}
Let $1 \leq q \leq p < \infty$, and let $f: {\Bbb R}\rightarrow {\Bbb R}$ be a Borel function. Then the composition operator $C_f$ maps the space $W^{1,p}([a,b])$ 
into $W^{1,q}([a,b])$ if and only if $f$ satisfies the local Lipschitz condition $(\star)$. Moreover, the operator $C_f$ is bounded and the following inequality holds: 
\[ \Vert C_f (g) \Vert _{W^{1,q}} \leq b(M)(1+\Vert g \Vert_{W^{1,p}}),\] where $\Vert g \Vert_{W^{1,p}} \leq M$ and $b(M)$ is a constant depending on $M$.
\end{thm}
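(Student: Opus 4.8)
The plan is to prove the two implications separately: sufficiency by a direct pointwise estimate, and necessity by a concentration (gluing) construction, which is where the hypothesis $q\leq p$ and the genuine difficulty enter. Throughout I use that, since $[a,b]$ is bounded and $p\geq 1$, the one-dimensional Sobolev embedding gives $W^{1,p}([a,b])\subseteq AC([a,b])\subseteq C([a,b])$, together with $\Vert g\Vert_{\infty}\leq c_{0}\Vert g\Vert_{W^{1,p}}$ for a constant $c_{0}=c_{0}(a,b,p)$.

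\emph{Sufficiency.} Assume $f$ satisfies $(\star)$ and fix $g\in W^{1,p}([a,b])$ with $\Vert g\Vert_{W^{1,p}}\leq M$. Then $g$ takes values in $[-r,r]$ with $r=\Vert g\Vert_{\infty}\leq c_{0}M$, and on $[-r,r]$ the function $f$ is Lipschitz with constant $k(r)$. First I would note that $f\circ g$ is absolutely continuous, being a Lipschitz function composed with an absolutely continuous one, hence differentiable a.e.; and at a.e.\ $x$ the difference quotients give $\vert (f\circ g)'(x)\vert \leq k(r)\,\vert g'(x)\vert$ directly from $(\star)$, \emph{without} invoking the exact chain rule. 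Using $q\leq p$ and H\"older's inequality on the bounded interval, $\Vert (f\circ g)'\Vert_{L^{q}}\leq k(r)\Vert g'\Vert_{L^{q}}\leq k(r)(b-a)^{\frac1q-\frac1p}\Vert g'\Vert_{L^{p}}$, while $\vert f(g(x))\vert\leq \vert f(0)\vert+k(r)r$ controls $\Vert f\circ g\Vert_{L^{q}}$. Collecting these, with $r\leq c_{0}M$ and hence $k(r)\leq k(c_{0}M)$, yields an estimate of the stated form $\Vert C_f(g)\Vert_{W^{1,q}}\leq b(M)(1+\Vert g\Vert_{W^{1,p}})$, proving both the action and the boundedness.

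\emph{Necessity.} Assume $C_f$ maps $W^{1,p}([a,b])$ into $W^{1,q}([a,b])$. I would first show that $f$ is locally absolutely continuous: feeding in affine test functions $g(x)=cx+d$, whose ranges cover an arbitrary compact interval, the requirement $f\circ g\in W^{1,q}\subseteq AC$ forces $f\in AC$ on that interval (by the affine change of variables); in particular $f$ is continuous and differentiable a.e. It then suffices to prove that $f$ is Lipschitz on each $[-r,r]$. Suppose it is not; then there are pairs $u_n\neq v_n$ in $[-r,r]$ with $\rho_n:=\vert f(u_n)-f(v_n)\vert/\vert u_n-v_n\vert\to\infty$, and since the supremum of these ratios is infinite I may prescribe the $\rho_n$ to grow as fast as needed and take $\vert u_n-v_n\vert$ as small as needed.

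The crux is to glue these bad pairs into a single $g\in W^{1,p}([a,b])$ for which $C_f(g)\notin W^{1,q}([a,b])$, contradicting the hypothesis. On pairwise disjoint subintervals $I_n\subseteq[a,b]$ of length $\ell_n$, with $\sum_n\ell_n<b-a$ and continuous transitions on the gaps, I let $g$ run monotonically from $v_n$ to $u_n$, so that $g$ stays in $[-r,r]$ and $\int_{I_n}\vert g'\vert^{p}=\vert u_n-v_n\vert^{p}\,\ell_n^{1-p}$; meanwhile the continuity of $f$ and $f\circ g\in AC$ give $\int_{I_n}\vert(f\circ g)'\vert\geq\vert f(u_n)-f(v_n)\vert$, whence by H\"older's inequality (here $q\geq1$) the lower bound $\int_{I_n}\vert (f\circ g)'\vert^{q}\geq \ell_n^{1-q}\,\vert f(u_n)-f(v_n)\vert^{q}$. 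Writing $\ell_n=s_n\vert u_n-v_n\vert$ for a free scale $s_n$, I then choose $s_n$ (with $t_n:=\int_{I_n}\vert g'\vert^p$ summable, e.g.\ $t_n=2^{-n}$) so that $\sum_n\int_{I_n}\vert g'\vert^{p}<\infty$, placing $g$ in $W^{1,p}$, yet $\sum_n\int_{I_n}\vert(f\circ g)'\vert^{q}=\infty$, placing $f\circ g$ outside $W^{1,q}$. The balancing of these two series is exactly where $q\leq p$ is decisive: the ratio of the $n$-th terms is of order $\rho_n^{q}\,s_n^{\,p-q}$, and with $p-q\geq0$ the divergence can be forced by prescribing $\rho_n$ large enough, the constraint $q\leq p$ ensuring that the $W^{1,p}$-series stays summable. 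The resulting contradiction yields $(\star)$. The main obstacle is precisely this construction: one must keep the $W^{1,p}$-norm finite while driving the $W^{1,q}$-seminorm to infinity using only the a priori unbounded difference quotients of $f$, and the bookkeeping of the exponents (the choice of $\ell_n$ and of the growth of $\rho_n$) is delicate though elementary. The argument runs parallel to Theorem~1 of \cite{MM}; alternatively, necessity can be organised by first establishing that an everywhere-defined $C_f$ is automatically bounded on bounded sets, via a Baire-category gluing of the same type, and then extracting the Lipschitz constant from oscillating test functions.
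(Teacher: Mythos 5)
First, a remark on comparison: the paper does not prove this theorem at all; it simply derives it from Theorem~1 of \cite{MM}. So your attempt has to be judged as a self-contained proof. Your sufficiency half is correct: the Sobolev embedding confines $g$ to $[-c_{0}M,c_{0}M]$, the difference-quotient bound $\vert (f\circ g)'\vert\le k(r)\vert g'\vert$ a.e.\ is legitimate without a chain rule, and H\"older with $q\le p$ gives the stated estimate. The necessity half, however, has a genuine gap at precisely the step you yourself identify as the crux: the balancing of the two series.

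The flaw is that you treat the ratios $\rho_{n}$ and the distances $d_{n}=\vert u_{n}-v_{n}\vert$ as independently prescribable, whereas they are linked through $f$: one only knows $d_{n}\le 2\sup_{[-r,r]}\vert f\vert/\rho_{n}$, and the link can be far more severe. In your bookkeeping (for $p>1$), $t_{n}=d_{n}s_{n}^{1-p}$ forces $s_{n}=(d_{n}/t_{n})^{1/(p-1)}$, so the $n$-th term of the $W^{1,q}$-series is $t_{n}\rho_{n}^{q}s_{n}^{p-q}=\rho_{n}^{q}\,d_{n}^{(p-q)/(p-1)}\,t_{n}^{(q-1)/(p-1)}$. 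Now take $f$ odd with $f(x)=x\log(1/x)$ near $0^{+}$ (continuous, not locally Lipschitz at $0$, so the theorem asserts $C_{f}$ must fail to act): for this $f$ every pair with difference quotient $\rho$ satisfies $d\le Ce^{-\rho/2}$. Since $t_{n}$ is summable, hence bounded, and $q\le p$, the $n$-th term above is at most $C'\rho_{n}^{q}e^{-\rho_{n}(p-q)/(2(p-1))}$, which for $q<p$ tends to $0$ (summably, once $\rho_{n}\ge n$) \emph{no matter how fast $\rho_{n}$ grows} --- indeed, the faster you let $\rho_{n}$ grow, the smaller it gets. So ``prescribing $\rho_{n}$ large enough'' forces the $W^{1,q}$-series to converge rather than diverge; your mechanism cannot produce the required bad function when $q<p$ (and when $p=q=1$ the parameter $s_{n}$ drops out entirely and $t_{n}=d_{n}$ is not prescribable). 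A correct construction requires the opposite selection principle: either ratios growing \emph{slowly}, adapted to $f$, with $t_{n}\asymp d_{n}$ (for the function above, $d_{n}=t_{n}=1/(n\log^{2}n)$, $\rho_{n}=\log(n\log^{2}n)$ works); or, more robustly, use your $AC_{loc}$ step to convert non-Lipschitzness into $f'\notin L^{\infty}_{loc}$, pick differentiability points $x_{k}$ with $\vert f'(x_{k})\vert\to\infty$ accumulating at a single point, and on the $k$-th block let $g$ trace the \emph{same} tiny pair $(x_{k},x_{k}+d_{k})$ back and forth $N_{k}$ times (a sawtooth). The repetition parameter $N_{k}$ is what decouples the two series: $d_{k}$ only needs to be small, and $N_{k}d_{k}$ can then be prescribed freely. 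This decoupling, absent from your sketch, is the actual content of the necessity direction.

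There is also a smaller gap: affine test functions only show that $f$ agrees \emph{almost everywhere} with a locally $AC$ function $h$. Since $C_{f}$ is pointwise composition and $W^{1,p}$ contains functions whose level sets have positive measure (constants, or $\mathrm{dist}(\cdot,C)$ for a fat Cantor set $C$), altering $f$ on a null set changes whether $C_{f}$ acts; moreover your lower bound $\int_{I_{n}}\vert (f\circ g)'\vert\ge\vert f(u_{n})-f(v_{n})\vert$ needs the actual values $f(u_{n}),f(v_{n})$ to be those of the continuous representative. To pin $f$ down pointwise, test additionally with functions that are constant on a subinterval and affine elsewhere: continuity of the $AC$ representative at the junction yields $f(u_{0})=h(u_{0})$ for every $u_{0}$. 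This second issue is routine to repair; the balancing gap above is not.
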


Some spaces behave well with respect to the composition operator, as the following results show: 

\begin{thm}  (\cite{ABM}: Theorem 5.20) The operator $C_{f}$ maps $C([a,b])$ into itself if and only if $f$ is continuous on ${\Bbb R}$.  In this case, the operator 
$C_{f}$ is automatically bounded and continuous in the norm $\Vert \cdot \Vert_{C}$. 
\end{thm}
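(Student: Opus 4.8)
The plan is to establish the equivalence first and then treat boundedness and continuity, all reducing to compactness and the Heine--Cantor theorem. The sufficiency is immediate: if $f$ is continuous on $\Bbb R$ and $g \in C([a,b])$, then $C_{f}(g) = f \circ g$ is a composition of continuous maps and hence lies in $C([a,b])$; no computation is needed here.

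For the necessity I would argue by contradiction through sequential continuity. Suppose $C_{f}(C([a,b])) \subseteq C([a,b])$ but $f$ fails to be continuous at some $u_{0} \in \Bbb R$, so there is a sequence $u_{n} \to u_{0}$ with $f(u_{n}) \not\to f(u_{0})$. I would then build an explicit witness $g \in C([a,b])$: fix points $t_{n} \nearrow b$ in $[a,b)$, set $g(t_{n}) = u_{n}$ and $g(b) = u_{0}$, take $g$ constant equal to $u_{1}$ on $[a,t_{1}]$, and interpolate piecewise-linearly on each $[t_{n},t_{n+1}]$. Since $u_{n} \to u_{0}$ and the interpolant on $[t_{n},t_{n+1}]$ stays trapped between $u_{n}$ and $u_{n+1}$, we get $g(t) \to u_{0} = g(b)$ as $t \to b^{-}$, so $g$ is continuous at $b$ as well and $g \in C([a,b])$. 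Then $C_{f}(g)(t_{n}) = f(u_{n}) \not\to f(u_{0}) = C_{f}(g)(b)$, contradicting $C_{f}(g) \in C([a,b])$. I expect this construction---producing a single continuous $g$ whose values along a convergent sequence of arguments reproduce the prescribed $u_{n}$---to be the main (and essentially the only) obstacle; the remaining parts are soft.

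For boundedness, given a bounded set $B \subseteq C([a,b])$ there is $M>0$ with $g([a,b]) \subseteq [-M,M]$ for every $g \in B$. Since $f$ is continuous on the compact interval $[-M,M]$, it is bounded there by some $K_{M}$, whence $\Vert C_{f}(g) \Vert_{C} = \sup_{x} \vert f(g(x)) \vert \leq K_{M}$ uniformly over $g \in B$, so $C_{f}(B)$ is bounded.

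For continuity, take $g_{k} \to g$ in $\Vert \cdot \Vert_{C}$. All the $g_{k}$ and $g$ eventually take values in a common compact interval $[-M,M]$, on which $f$ is uniformly continuous by Heine--Cantor. Given $\eta > 0$, choose $\delta > 0$ from uniform continuity; for $k$ large, $\Vert g_{k} - g \Vert_{C} < \delta$ forces $\vert g_{k}(x) - g(x) \vert < \delta$ for every $x$, hence $\vert f(g_{k}(x)) - f(g(x)) \vert < \eta$, and taking the supremum gives $\Vert C_{f}(g_{k}) - C_{f}(g) \Vert_{C} \leq \eta$. Thus $C_{f}$ is continuous in $\Vert \cdot \Vert_{C}$.
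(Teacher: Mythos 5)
Your proof is correct in all four parts (both implications, boundedness, and continuity). Note that the paper itself gives no proof of this statement---it is quoted verbatim from \cite{ABM} (Theorem 5.20)---so there is no in-paper argument to compare against; your proof is essentially the standard one, with compactness and Heine--Cantor doing the work for boundedness and continuity. One simplification worth knowing for the necessity: instead of the piecewise-linear interpolation through a sequence witnessing discontinuity, you can compose with affine homeomorphisms. Given any $u_{0} \in {\Bbb R}$, let $g$ be the affine homeomorphism of $[a,b]$ onto $[u_{0}-1, u_{0}+1]$; then $g \in C([a,b])$, so by hypothesis $f \circ g \in C([a,b])$, and hence $f_{|[u_{0}-1, u_{0}+1]} = (f \circ g) \circ g^{-1}$ is continuous. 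Since $u_{0}$ is arbitrary, $f$ is continuous on ${\Bbb R}$. This replaces the sequence construction (and the passage to a subsequence implicitly needed to make ``$f(u_{n}) \not\to f(u_{0})$'' contradict continuity at $b$) by a one-line identity, though your construction is sound as written: continuity of $g$ at $b$ follows exactly as you say, because on $[t_{n}, t_{n+1}]$ the interpolant is trapped between $u_{n}$ and $u_{n+1}$, both tending to $u_{0}$.
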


\begin{rmk} Let $0 < \gamma \leq 1$. As example  5.25 in \cite{ABM} shows, there exists a composition operator $C_{f}$ that maps $Lip_{\gamma}([0,1])$ into 
itself but is not continuous. In order to have the continuity of $C_{f}$, extra properties have to be satisfied by the generating function $f$. In \cite{GS} the authors 
prove that $C_{f}$ is continuous on $Lip_{\gamma}([a,b])$ if and only if $f \in C^{1}({\Bbb R})$.  In Theorem 5.26 of \cite{ABM}, the authors prove that the 
continuity of $C_{f}$, defined from $Lip_{\gamma}([a,b])$ into itself, is equivalent to its uniform continuity on bounded subsets.
 \end{rmk}

Given a space $X$ of real functions defined on a real interval $J$, in accordance with the terminology used in \cite{Z} by Zhao in the particular case of Baire functions, 
we say that a function $f:{\Bbb R} \rightarrow {\Bbb R}$ is a {\it left $X$ compositor} if $f \circ g$ belongs to $X$ whenever $g$ is an element 
of  $X$. Hence, we can re-write Theorem \ref{thmautBV} as a characterisation of left  compositors for some spaces. 

\begin{thm} \label{theoautBV-1}
Let $1 < p < \infty$, $0 < \gamma \leq 1$. Let $f: {\Bbb R} \rightarrow {\Bbb R}$. 
The following statements are equivalent.
\begin{enumerate}
\item[(a)]{The function $f$ satisfies the local Lipschitz condition $(\star)$.}
\item[(b)]{The  function $f$ is a left $BV_{p}([a,b])$ compositor.}
\item[(c)]{The  function $f$ is a left $BV([a,b])$ compositor.}
\item[(d)]{The function $f$ is a left $AC([a,b])$ compositor.}
\item[(e)]{The  function $f$ is a left $RBV_{p}([a,b])$ compositor.}
\item[(f)]{The  function $f$ is a left $Lip_{\gamma}([a,b])$ compositor.}
\end{enumerate}
\end{thm}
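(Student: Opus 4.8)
The plan is to recognise that Theorem~\ref{theoautBV-1} is nothing more than a restatement of Theorem~\ref{thmautBV} in the compositor terminology introduced just above it. By the very definition of the autonomous composition operator, $C_f(g)=f\circ g$ for every $g$; hence the assertion ``$f$ is a left $X$ compositor'', which by definition means that $f\circ g\in X$ whenever $g\in X$, is word-for-word the assertion ``$C_f$ maps $X$ into itself''. First I would record this dictionary explicitly, once and for all, for $X\in\{BV_p,\,BV,\,AC,\,RBV_p,\,Lip_\gamma\}$.

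With the dictionary in hand, each of the conditions (b)--(f) of Theorem~\ref{theoautBV-1} is literally identical to the correspondingly labelled condition (b)--(f) of Theorem~\ref{thmautBV}, and condition (a), the local Lipschitz condition $(\star)$, is common to both statements. The equivalence of all six conditions therefore follows immediately by invoking Theorem~\ref{thmautBV}, under the same quantifier hypotheses $1<p<\infty$ and $0<\gamma\le 1$.

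Consequently there is no genuine obstacle here: the entire mathematical content is carried by Theorem~\ref{thmautBV}, which we are entitled to assume. If instead one sought a self-contained argument, the real work would lie in the cyclic chain of implications underlying Theorem~\ref{thmautBV}. The direction $(\star)\Rightarrow$ (each mapping property) would use the local Lipschitz bound together with the appropriate variation or Sobolev estimate for the space in question, combined with the strict inclusions among $Lip_\gamma$, $W^{1,p}$, $RBV_p$, $AC$, $BV_p$ and $BV$ recalled in Section~2. The delicate part would be the necessity of $(\star)$: from a function $f$ failing to be locally Lipschitz one must manufacture a concrete $g$ lying in the smallest relevant space, so that it lies in all of them, whose image $f\circ g$ escapes the largest relevant space, thereby breaking every mapping property at once. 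It is exactly this kind of explicit counterexample construction, rather than the reformulation itself, that constitutes the genuinely nontrivial step.
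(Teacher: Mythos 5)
Your proposal is correct and matches the paper exactly: the paper states Theorem~\ref{theoautBV-1} with no separate proof, treating it precisely as a re-writing of Theorem~\ref{thmautBV} via the definitional identity that ``$f$ is a left $X$ compositor'' means ``$C_f$ maps $X$ into itself''. Your extra remarks about what a self-contained proof of Theorem~\ref{thmautBV} would require are sensible but not needed, since that theorem is imported from the literature in both your argument and the paper's.
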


\begin{rmk}
Hence, the collections of left $AC([a,b])$, $BV_{p}([a,b])$ ($1 \leq p < \infty$),  $RBV_{p}([a,b])$  ($1 <p < \infty$), $Lip_{\gamma}([a,b])$  ($0< \gamma \leq 1$)  
compositors are all the same, namely they all coincide with the collection of all maps satisfying $(\star)$.
\end{rmk}
   
\noindent    
From Theorem \ref{theoautBV-1} and the fact that the composition, the sum and the product of two functions satisfying the local Lipschitz condition $(\star)$ 
still satisfy the local Lipschitz condition $(\star)$, we have the following proposition. 
 
\begin{prop} Let $1 < p < \infty$, $0 < \gamma \leq 1$.  Let $X= BV_{p}([a,b])$, $BV([a,b])$, $AC([a,b])$, $RBV_p([a,b])$, $Lip_{\gamma}([a,b])$. Then, the following hold.  
\begin{enumerate}
 \item{If $f: {\Bbb R} \rightarrow {\Bbb R}$ and $g: {\Bbb R} \rightarrow {\Bbb R}$ are left $X$ compositors then so is the sum $f+g$.}
\item{If $f: {\Bbb R} \rightarrow {\Bbb R}$ and $g: {\Bbb R} \rightarrow {\Bbb R}$ are left $X$ compositors then so is the product $fg$.}
\item{If $f: {\Bbb R} \rightarrow {\Bbb R}$ and $g: {\Bbb R} \rightarrow {\Bbb R}$ are left $X$ compositors then so is the composition $f \circ g$.}
\end{enumerate}
In particular, for these spaces, the collection of left compositors is a vector space and an algebra. 
\end{prop}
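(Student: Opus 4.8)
The plan is to reduce everything to the single characterisation provided by Theorem \ref{theoautBV-1}: for each of the listed spaces $X$, a function $f$ is a left $X$ compositor \emph{if and only if} it satisfies the local Lipschitz condition $(\star)$, and crucially this condition is the \emph{same} for all of these spaces. Thus the three closure statements become purely pointwise assertions about the class $\mathcal{L}$ of functions $f:{\Bbb R}\rightarrow{\Bbb R}$ satisfying $(\star)$, and it suffices to prove that $\mathcal{L}$ is closed under sums, products and compositions. Once this is established, closure under sums together with closure under scalar multiplication (which is immediate, since $\vert c f(u) - c f(v)\vert \leq \vert c\vert\, k(r)\vert u-v\vert$ and the zero function lies in $\mathcal{L}$) shows that the left $X$ compositors form a vector space, and the additional closure under products upgrades it to an algebra.

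First I would record the elementary but essential observation that any $f \in \mathcal{L}$ is \emph{locally bounded}: applying $(\star)$ with $v = 0$ gives $\vert f(u)\vert \leq \vert f(0)\vert + k(r)\, r =: M_f(r)$ for all $\vert u\vert \leq r$. For the sum, if $f, g \in \mathcal{L}$ with constants $k_f(r), k_g(r)$, the triangle inequality yields
\[ \vert (f+g)(u) - (f+g)(v)\vert \leq \bigl(k_f(r) + k_g(r)\bigr)\,\vert u-v\vert, \qquad \vert u\vert,\vert v\vert \leq r, \]
so $f+g \in \mathcal{L}$. For the product I would split $f(u)g(u) - f(v)g(v) = f(u)\bigl(g(u)-g(v)\bigr) + g(v)\bigl(f(u)-f(v)\bigr)$ and bound the two terms using $(\star)$ together with the local bounds $M_f(r), M_g(r)$, obtaining the constant $M_f(r)k_g(r) + M_g(r)k_f(r)$ for $fg$ on $[-r,r]$.

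The composition is where local boundedness genuinely enters, and I expect it to be the only step requiring care. Given $r>0$ and $\vert u\vert,\vert v\vert\le r$, I would first note $\vert g(u)\vert, \vert g(v)\vert \leq M_g(r) =: R$, and then apply the Lipschitz estimate for $f$ \emph{at the enlarged radius} $R$:
\[ \vert f(g(u)) - f(g(v))\vert \leq k_f(R)\,\vert g(u)-g(v)\vert \leq k_f(R)\,k_g(r)\,\vert u-v\vert, \]
so $f \circ g \in \mathcal{L}$ with constant $k_f\!\bigl(M_g(r)\bigr)\, k_g(r)$. The subtle point is precisely that the Lipschitz constant for the outer function $f$ must be taken at the radius $R = M_g(r)$ dictated by the range of $g$ on $[-r,r]$, rather than at $r$ itself; this is what makes the local boundedness of $g$ indispensable. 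Assembling these three facts and invoking Theorem \ref{theoautBV-1} to translate back from $\mathcal{L}$ to left $X$ compositors completes the proof.
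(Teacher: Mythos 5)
Your proof is correct and takes essentially the same route as the paper: the paper deduces the proposition from Theorem \ref{theoautBV-1} together with the asserted (but unproved) fact that the class of functions satisfying the local Lipschitz condition $(\star)$ is closed under sums, products and compositions. Your write-up simply supplies the verification the paper omits, including the one genuinely delicate point — taking the Lipschitz constant of the outer function at the enlarged radius $M_g(r)$ in the composition step.
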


\paragraph{Composition operators on spaces of Baire functions}
\hfill \break
Let $g: {\Bbb R}  \rightarrow {\Bbb R}$ and $f: {\Bbb R} \rightarrow{\Bbb R}$. If $g$ is a Baire one function and $f$ is continuous, then 
the composition function $f \circ g$ is Baire one but, as it is well-known,  the composition of two Baire one functions is not necessarily 
Baire one. Here is a well-known example.

\begin{exmp} (\cite{Z}: Example 1) \label{EXMPZ}
Let $f: {\Bbb R} \rightarrow {\Bbb R}$ be defined as
 \[f(x) = \left\{ \begin{array}{ll}
1  & \mbox{if } x= \frac{1}{n}, n \in {\Bbb N}\\
 0 & {otherwise} \\
\end{array}
\right.\] 
and $g: {\Bbb R} \rightarrow {\Bbb R}$ be the Riemann function defined as
 \[g(x) = \left\{ \begin{array}{lll}
\frac{1}{q}  & \mbox{if } x= \frac{p}{q}, p \text{ and }q  \text{ are co-prime integers and } 0< q\\
 1 & \mbox{if }  x=0\\
 0 & {otherwise.} \\
\end{array}
\right.\] 
 Then $f \circ g$ is the Dirichlet function, that is not Baire one.  \\
 Notice that, by taking $f= \chi_{(0,1]}$ and $g$ the same as above, we still have that $f \circ g$ is the Dirichlet function. This also shows, as $f$ has a finite number of discontinuity 
 points (namely, exactly one: $x=0$), that the last claim in \cite{Z} is not true. 
 \end{exmp}  

\noindent
Thus, as done above for other spaces,  it is natural to ask which functions $f$ have the property that their composition with any Baire one function is still of Baire 
class one, that is $C_{f}$ maps the space of Baire one functions in itself.  

\begin{rmk} As already mentioned, Zhao, in \cite{Z}, calls a function $f: {\Bbb R} \rightarrow {\Bbb R}$ for which $C_{f}({\mathcal B}_{1}) \subseteq {\mathcal B}_{1}$ a 
{\it left Baire one compositor}. That is, $f$ is a left Baire one compositor if and only if $f \circ g$ is Baire one whenever $g$ is a Baire one function. 
\end{rmk}

\noindent
The following result follows from Theorem 3 of \cite{FC1}, in the case of Baire one functions and, more generally, from Theorem D of \cite{KM}, for Baire functions of class $n$.  

\begin{thm} \label{thmBairen}
Let $n \in {\Bbb N}$. Let $f: {\Bbb R} \rightarrow {\Bbb R}$. The following are equivalent:
\begin{enumerate}
\item[(a)]{The function $f$ is continuous.}
\item[(b)]{The operator $C_f$ maps the space ${\mathcal B}_{n}$ into itself.}
\end{enumerate}
\end{thm}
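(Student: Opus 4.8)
The plan is to establish the two implications separately. The implication (a) $\Rightarrow$ (b) is the easy, constructive direction and will follow by a short induction on $n$; the substance of the theorem lies in (b) $\Rightarrow$ (a), which I would prove in contrapositive form, manufacturing from a single point of discontinuity of $f$ a function $g \in {\mathcal B}_n$ whose composite $f \circ g$ escapes ${\mathcal B}_n$. Before doing so I would record one cheap observation: the identity map is continuous, hence lies in ${\mathcal B}_n$, and $C_f(\mathrm{id}) = f$, so $C_f({\mathcal B}_n) \subseteq {\mathcal B}_n$ already forces $f \in {\mathcal B}_n$; the real content is therefore upgrading ``$f \in {\mathcal B}_n$'' to ``$f$ continuous''.

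For (a) $\Rightarrow$ (b) I would argue by induction on $n$. In the base case $n=1$, if $g \in {\mathcal B}_1$ pick continuous $g_k \to g$ pointwise; since $f$ is continuous, $f \circ g_k \to f \circ g$ pointwise, and each $f \circ g_k$ is continuous, so $f \circ g \in {\mathcal B}_1$. For the inductive step, let $g \in {\mathcal B}_n$ and choose $g_k \in {\mathcal B}_{n-1}$ with $g_k \to g$ pointwise; continuity of $f$ gives $f \circ g_k \to f \circ g$ pointwise, the inductive hypothesis gives $f \circ g_k \in {\mathcal B}_{n-1}$, and hence $f \circ g \in {\mathcal B}_n$. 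This is routine and I expect it to cost nothing.

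For (b) $\Rightarrow$ (a), suppose $f$ is discontinuous at some $c$, so there are $\epsilon_0 > 0$ and points $c_k \to c$ with $\vert f(c_k) - f(c) \vert \geq \epsilon_0$ for all $k$. For $n=1$ the construction can be made explicit, generalizing Example \ref{EXMPZ}: set $\tilde g(p/q) = c_q$ on rationals written in lowest terms and $\tilde g = c$ on the irrationals; since $c_k \to c$, the map $\tilde g$ is continuous at every irrational and discontinuous only on the countable set of rationals, so $\tilde g \in {\mathcal B}_1$, whereas $f \circ \tilde g$ equals $f(c)$ on the irrationals and takes values $\epsilon_0$-far from $f(c)$ on the rationals, hence its restriction to any closed interval is nowhere continuous and $f \circ \tilde g \notin {\mathcal B}_1$ by the third equivalent condition for Baire one functions listed in Section 2. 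For general $n$ I would invoke the Lebesgue--Hausdorff characterization, of which the $F_\sigma$ and $G_{\delta\sigma}$ statements recalled in Section 2 are the cases $n=1,2$: a function $g$ lies in ${\mathcal B}_n$ if and only if $g^{-1}(U)$ belongs to the additive Borel class $\Sigma^0_{n+1}$ for every open $U$. I would then fix a set $S \subseteq {\Bbb R}$ that is properly $\Sigma^0_{n+1}$ (such sets exist since the Borel hierarchy on ${\Bbb R}$ is strictly increasing), write $S = \bigsqcup_k E_k$ as a disjoint union of sets $E_k$ in the ambiguous class $\Delta^0_{n+1}$ (possible because $S$ is a countable union of $\Pi^0_n \subseteq \Delta^0_{n+1}$ sets, which one disjointifies), and define $g = c_k$ on $E_k$ and $g = c$ on ${\Bbb R} \setminus S$.

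Finally I would check the two halves of the contradiction. That $g \in {\mathcal B}_n$ follows by verifying $g^{-1}(U) \in \Sigma^0_{n+1}$ for open $U$: if $c \notin U$ the preimage is a countable union of the $E_k$, hence $\Sigma^0_{n+1}$; if $c \in U$ then, as $c_k \to c$, all but finitely many $c_k$ lie in $U$, so the preimage is ${\Bbb R}$ minus a finite union of $\Delta^0_{n+1}$ sets, again $\Sigma^0_{n+1}$. That $f \circ g \notin {\mathcal B}_n$ follows by taking $V = (f(c) - \epsilon_0, f(c) + \epsilon_0)$: since $f(c) \in V$ but no $f(c_k) \in V$, we get $(f \circ g)^{-1}(V) = g^{-1}(\{c\}) = {\Bbb R} \setminus S$, which is not $\Sigma^0_{n+1}$ precisely because $S$ is not $\Pi^0_{n+1}$. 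The main obstacle is exactly this reverse direction: the delicate point is to choose $g$ so that its own open-set preimages stay within class $\Sigma^0_{n+1}$ (which forces the pieces $E_k$ to be ambiguous and the target values $c_k$ to accumulate only at $c$), while the single open set $V$ isolating $f(c)$ pulls the genuinely hard complement ${\Bbb R} \setminus S$ back through $f \circ g$; balancing these two requirements is where the descriptive--set--theoretic bookkeeping concentrates.
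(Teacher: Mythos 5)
Your proof is correct, but it is a genuinely different route from the paper's: the paper gives no argument at all for Theorem \ref{thmBairen}, simply deriving it from Theorem 3 of \cite{FC1} (the case $n=1$) and Theorem D of \cite{KM} (general $n$). What you supply instead is a self-contained proof: the implication (a) $\Rightarrow$ (b) by the routine induction on $n$ via pointwise limits, and the contrapositive of (b) $\Rightarrow$ (a) by an explicit construction --- for $n=1$ a Riemann-function-type map $\tilde g$ (essentially the paper's Example \ref{EXMPZ} adapted to an arbitrary discontinuity point of $f$), and for general $n$ a function $g$ built from a properly $\Sigma^0_{n+1}$ set $S$ disjointified into $\Delta^0_{n+1}$ pieces $E_k$, so that $g$ is $\Sigma^0_{n+1}$-measurable (hence Baire $n$ by the Lebesgue--Hausdorff theorem) while $(f\circ g)^{-1}(V) = {\Bbb R}\setminus S$ for a suitable open $V$, which fails to be $\Sigma^0_{n+1}$. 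I checked the two delicate points and both are sound: the disjointification works because $\Pi^0_n \subseteq \Delta^0_{n+1}$ and $\Delta^0_{n+1}$ is an algebra, and the case analysis on whether $c \in U$ correctly exploits $c_k \to c$ so that only finitely many pieces are excluded. The trade-off is clear: your argument is self-contained modulo two standard facts of descriptive set theory (the Lebesgue--Hausdorff characterisation of finite Baire classes and the non-collapse of the Borel hierarchy on ${\Bbb R}$), which makes the mechanism of the theorem visible --- discontinuity of $f$ lets one pull a genuinely hard set back through a single open interval isolating $f(c)$ --- whereas the paper's citation route is shorter and defers all of this bookkeeping to \cite{FC1} and \cite{KM}.
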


\noindent
We have that, in the case of Baire functions, the composition operator behaves well.  Namely, the following holds. 

\begin{thm} Let $n \in {\Bbb N}$. Let $f: {\Bbb R} \rightarrow {\Bbb R}$. 
If $C_f$ maps the space ${\mathcal B}_{n}$ into itself then it is automatically continuous with respect to pointwise convergence. 
 \end{thm}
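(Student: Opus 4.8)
The plan is to reduce the statement to the continuity of $f$ and then exploit the elementary fact that continuous functions commute with pointwise limits. First I would invoke Theorem \ref{thmBairen}: by hypothesis $C_f$ maps ${\mathcal B}_{n}$ into itself, so condition (b) of that theorem holds, whence condition (a) holds, i.e. the generating function $f$ is continuous on ${\Bbb R}$. This is the only place where the standing assumption on $C_f$ is used, and its role is precisely to convert a statement about the action of the operator into a pointwise regularity statement about $f$.

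Next I would unwind what continuity with respect to pointwise convergence means and carry out the direct verification. Let $\{g_k\}_{k \in {\Bbb N}}$ be a sequence in ${\mathcal B}_{n}$ converging pointwise to some $g \in {\mathcal B}_{n}$; I must show that $C_f(g_k) \to C_f(g)$ pointwise. Fixing a point $x$ in the domain, the assumption gives $g_k(x) \to g(x)$ in ${\Bbb R}$, and since $f$ is continuous we obtain $f(g_k(x)) \to f(g(x))$, that is $C_f(g_k)(x) \to C_f(g)(x)$. As $x$ is arbitrary, this yields the pointwise convergence $C_f(g_k) \to C_f(g)$. Note that the hypothesis also guarantees $C_f(g_k), C_f(g) \in {\mathcal B}_{n}$, so the convergence genuinely takes place within the space under consideration, and no separate continuity assumption on $C_f$ is needed; it is automatic.

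The argument is short because the only substantive input is Theorem \ref{thmBairen}; everything else is the observation that a continuous $f$ preserves pointwise limits. The closest thing to an obstacle is conceptual rather than technical, namely pinning down the meaning of continuity here. If one prefers the topological formulation (the topology of pointwise convergence, i.e. the product topology on ${\Bbb R}^{X}$ restricted to ${\mathcal B}_{n}$), the cleanest verification is that for each $x$ the evaluation functional composed with $C_f$ equals $f$ composed with the corresponding coordinate projection, both of which are continuous, so their composition is continuous and hence so is $C_f$. In the sequential reading given above no such care is required, and the conclusion is immediate once $f$ is known to be continuous.
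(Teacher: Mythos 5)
Your proof is correct and follows essentially the same route as the paper's: invoke Theorem \ref{thmBairen} to conclude that $f$ is continuous, then observe that a continuous $f$ carries a pointwise convergent sequence $g_k \to g$ in ${\mathcal B}_{n}$ to the pointwise convergent sequence $f \circ g_k \to f \circ g$. Your write-up is merely more explicit at the pointwise-limit step, which the paper leaves implicit.
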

\begin{proof}
As $C_f$ maps the space ${\mathcal B}_{n}$  into itself, by Theorem \ref{thmBairen}, $f$ is continuous. Assume that $\{g_{k}\}_{k \in \Bbb N}$ is a 
sequence in  ${\mathcal B}_{n}$ converging pointwise to a function $g$ of Baire class $n$.  Then $C_{f}(g_{k})=  f \circ g_{k}$ is a sequence of Baire 
functions of class $n$ pointwise converging to the Baire class $n$ function $C_{f}(g) = f \circ g$. \\
\end{proof}

Recall that a subset $A$ of ${\mathcal B}_{n}$ is said to be {\it bounded} if each element $h$ in $A$ is bounded, 
that is ${\Vert h \Vert}_{\infty} = \sup \vert h(x) \vert < \infty$, 
and, moreover,  there exists $M>0$ with ${\Vert h \Vert}_{\infty} \leq M, \forall h \in A$. 
When $f$ is continuous, it is straightforward that the operator $C_f$ is locally bounded on the space $B(\Bbb R)$ of
bounded functions on the reals with the sup-norm, and hence, on ${\mathcal B}_{n}$ as well. 
(Let $A$ be a bounded set of functions and let $M >0$ be such that ${\Vert h \Vert}_{\infty} \leq M$, $\forall h \in A$. 
As $f$ is continuous on ${\Bbb R}$, the restriction of $f$ to the compact  $[-M, M]$ admits a maximum. Call this maximum $L$. 
Then, for  each $h \in A$, we have ${\Vert C_{f}(h) \Vert}_{\infty} =   {\Vert f \circ h \Vert}_{\infty} \leq L$. 
Hence, $C_{f}(A)$ is bounded.) \\

{\bf A natural question arises:} what about right compositors in all the previous spaces? Clearly, right composition $g \mapsto g \circ f$, defined with 
a suitable $f$ and on suitable spaces, is linear. \\
This question is investigated in Section 4.

\subsection{Superposition operators}

\paragraph{Superposition operators on Lipschitz functions and some spaces of functions of bounded variation}
\hfill \break
\indent
As for the case of composition operators, we are interested in investigating which spaces are mapped by $S_{h}$ into themselves 
and, in general, in the properties of $S_{h}.$
Natural sufficient conditions are the following: 

\begin{prop} (\cite{AZ1}: Proposition 3.1) Let $h:[a, b] \times {\Bbb R} \rightarrow {\Bbb R}$, $0< \gamma <1$, and $1< p < + \infty$.
\begin{enumerate}
\item{If $h(\cdot, y) \in Lip$ uniformly w.r.t. $y$ then $S_{h}$ maps $Lip$ into itself.}
\item{If $h(\cdot, y) \in BV$ uniformly w.r.t. $y$ and  $h(x, \cdot) \in Lip$  uniformly w.r.t. $x$ then $S_{h}$ maps $BV$ into itself.}
\item{If $h(\cdot, y) \in AC$ uniformly w.r.t. $y$ and  $h(x, \cdot)\in Lip$  uniformly w.r.t. $x$ then $S_{h}$ maps $AC$ into itself.}
\item{If $h(\cdot, y) \in Lip_{\gamma}$ uniformly w.r.t. $y$ and $h(x, \cdot) \in Lip$ uniformly w.r.t. $x$ then $S_{h}$ maps $Lip_{\gamma}$ into itself.}
\item{If $h(\cdot, y) \in BV_{p}$ uniformly w.r.t. $y$ and  $h(x, \cdot) \in Lip$ uniformly w.r.t. $x$ then $S_{h}$ maps $BV_{p}$ into itself.}
\end{enumerate}
\end{prop}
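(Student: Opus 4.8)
The plan is to handle all five cases with a single device: split each increment of $S_h(g)$ into a part in which the second argument is frozen and a part in which the first argument is frozen. For $x_1,x_2\in[a,b]$,
\[ h(x_2,g(x_2))-h(x_1,g(x_1))=[\,h(x_2,g(x_2))-h(x_1,g(x_2))\,]+[\,h(x_1,g(x_2))-h(x_1,g(x_1))\,]. \]
The first bracket is an increment of $h(\cdot,y)$ in the first variable at the frozen level $y=g(x_2)$, and is governed by the uniform-in-$y$ regularity assumed on $h(\cdot,y)$; the second bracket is an increment of $h(x_1,\cdot)$ in the second variable, hence is at most $M\,|g(x_2)-g(x_1)|$, where $M$ denotes the (uniform in $x$) Lipschitz constant of $h$ in its second argument. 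Everything then reduces to estimating the two resulting pieces in the norm of the target space and invoking the assumed membership of $g$.

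For (1) and (4) the estimate is pointwise. In (1) the first bracket is at most $L\,|x_2-x_1|$, with $L$ the first-variable Lipschitz constant, and the second is at most $M\,|g(x_2)-g(x_1)|\le M\,Lip(g)\,|x_2-x_1|$, so $Lip(S_h(g))\le L+M\,Lip(g)$. In (4) the same splitting gives first bracket $\le L\,|x_2-x_1|^{\gamma}$ and second bracket $\le M\,Lip_\gamma(g)\,|x_2-x_1|^{\gamma}$, whence $S_h(g)\in Lip_\gamma$. No partition is involved, so these two cases are immediate.

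For (2), (3) and (5) one applies the splitting termwise to a partition $t_0<\dots<t_n$, and to a finite disjoint family $\{(a_i,b_i)\}$ in case (3). The second-variable pieces behave well: their sum is $\le M\sum_i|g(t_i)-g(t_{i-1})|$, which is $\le M\,V(g,[a,b])$ in (2), can be made arbitrarily small together with $\sum_i(b_i-a_i)$ by the absolute continuity of $g$ in (3), and, after the elementary inequality
\[ (\alpha+\beta)^p\le 2^{p-1}(\alpha^p+\beta^p), \]
contributes at most $2^{p-1}M^p\,V_p(g,[a,b])$ in (5). In each case finiteness (or smallness) comes directly from $g$ belonging to the space in question.

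The main obstacle is the first-variable sum $\sum_i|h(t_i,g(t_i))-h(t_{i-1},g(t_i))|$, together with its absolute-continuity and $p$-variation analogues, in which the frozen level $g(t_i)$ changes from term to term. For a single fixed level the bound is immediate from the hypothesis on $h(\cdot,y)$, but here different summands live at different levels, so one cannot simply read off the total variation of one function. The resolution is exactly the uniformity in $y$: the relevant quantity --- total variation, modulus of absolute continuity, or total $p$-variation of $h(\cdot,y)$ --- is bounded by a constant independent of $y$, and, since each summand involves $h$ at only one second-coordinate value, the whole sum is controlled by this uniform bound irrespective of the particular values $g(t_i)$. Making this step rigorous is the delicate point, and it is cleanest to read ``$h(\cdot,y)$ lies in the given class uniformly w.r.t. $y$'' as the assertion that the corresponding variation of $x\mapsto h(x,\cdot)$, viewed as a map into $(B(\Bbb R),\|\cdot\|_\infty)$, is finite --- equivalently, that the supremum over partitions and over all choices of intermediate second-coordinate values is finite --- since this is precisely what neutralises the varying frozen level. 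Combining the bound on the first-variable sum with the bound on the second-variable sum then places $S_h(g)$ in $BV$, $AC$, or $BV_p$ respectively, completing each case.
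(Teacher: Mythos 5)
This proposition is quoted in the paper from \cite{AZ1} without any proof, so there is no argument of the authors to compare yours against; it must be judged on its own merits. Your treatment of items (1) and (4) is correct, with one caveat: item (1), as worded in the survey, assumes nothing about the dependence of $h$ on its second variable, while your bound $M\,|g(x_2)-g(x_1)|$ for the second bracket silently adds the hypothesis that $h(x,\cdot)\in Lip$ uniformly w.r.t. $x$. That hypothesis cannot be dispensed with (for $h(x,y)=\sqrt{|y|}$ and $g(x)=x$ on $[0,1]$ the literal statement of (1) fails), so you are right to use it, but you should say explicitly that you are reading (1) with the same second-variable condition as (2)--(5).

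For items (2), (3), (5) there is a genuine gap. You correctly isolate the crux --- in $\sum_i |h(t_i,u_i)-h(t_{i-1},u_i)|$, with $u_i=g(t_i)$, the frozen level changes from term to term --- but you do not overcome it: you instead replace the hypothesis by a strictly stronger one, namely finiteness of $\sup \sum_i \sup_y |h(t_i,y)-h(t_{i-1},y)|$ over all partitions, i.e. the variation of $x\mapsto h(x,\cdot)$ in the sup-norm. Under the standard reading of ``$h(\cdot,y)\in BV$ uniformly w.r.t. $y$'', which is $\sup_y V(h(\cdot,y),[a,b])<\infty$, your argument does not close, and this is not something a cleverer estimate could repair: with that reading, item (2) is in fact false. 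One can build $h$ with $\sup_y V(h(\cdot,y))\le 1$ and $\sup_x Lip(h(x,\cdot))\le 1$, together with a bounded nondecreasing $g$, such that $V(S_h(g),[a,b])=\infty$: on a block with $n$ partition points take $h(x,y)=\varepsilon\sum_j(-1)^j A_j(x)B_j(y)$, where $A_j$ are ramps in $x$ and $B_j$ are tents of half-width $\rho$ centered at equally spaced levels of gap $\Delta$; the alternating signs keep the $y$-Lipschitz constant of order $\varepsilon/\rho$ and each row variation of order $\varepsilon\rho/\Delta$, while the sum along the staircase is $n\varepsilon$, which after optimizing ($\varepsilon\sim\sqrt{MC\Delta}$) grows like $\sqrt{n}$; stacking such blocks, disjoint in $x$ and in level ranges, gives a single counterexample. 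This is in essence Ma\'ckowiak's counterexample to Lyamin's theorem, and it is exactly why the characterization of \cite{BBKM}, quoted in the paper immediately after this proposition, must control sums taken along sequences of varying levels $u_i$ rather than fixed-level variations. So what you have written is a correct proof of a different statement with a stronger hypothesis; as a proof of the proposition as it is stated (and as it will naturally be read), items (2), (3) and (5) are not established, and item (2) cannot be.
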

\noindent
Other known results are the following.

\begin{thm} (\cite{ABM}: Theorem 6.1) Let $h :[a, b] \times {\Bbb R} \rightarrow {\Bbb R}$. The operator $S_{h}$ maps $C([a,b])$ into itself if and 
only if $h$ is continuous on $[a,b] \times \Bbb R$. In this case, 
the operator $S_{h}$ is automatically bounded and continuous in the norm $\Vert \cdot \Vert_{\infty}$.
\end{thm}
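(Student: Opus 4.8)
The plan is to prove the biconditional first and then the ``moreover'' part, treating the forward implication (continuity of $h$ forcing $S_h$ to map $C([a,b])$ into itself) as the easy half. If $h$ is continuous and $g \in C([a,b])$, then $t \mapsto (t, g(t))$ is a continuous map of $[a,b]$ into $[a,b] \times {\Bbb R}$, so its composition with $h$, namely $S_h(g)(t) = h(t, g(t))$, is continuous; hence $S_h(g) \in C([a,b])$. For the converse I would first extract the ``separate'' continuity that comes for free: plugging in the constant function $g \equiv y$ shows that $S_h(g) = h(\cdot, y)$ is continuous for every fixed $y \in {\Bbb R}$, i.e. $h$ is continuous in its first variable.

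The core of the argument is to upgrade this to joint continuity, and I would argue by contradiction. Suppose $h$ fails to be continuous at some $(t_0, y_0)$. Since $[a,b] \times {\Bbb R}$ is metric, there is $\epsilon > 0$ and a sequence $(t_n, y_n) \to (t_0, y_0)$ with $|h(t_n, y_n) - h(t_0, y_0)| \geq \epsilon$ for all $n$. The goal is to manufacture a single $g \in C([a,b])$ that ``sees'' this bad behaviour, so that $S_h(g)$ is discontinuous at $t_0$, contradicting $S_h(g) \in C([a,b])$. First I would replace each $t_n$ by a point $s_n \neq t_0$: if $t_n \neq t_0$ take $s_n = t_n$, and if $t_n = t_0$ use the already-established continuity of $h(\cdot, y_n)$ at $t_0$ to pick $s_n$ within $1/n$ of $t_0$ with $|h(s_n, y_n) - h(t_0, y_n)| < \epsilon/2$. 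After this perturbation one still has $|h(s_n, y_n) - h(t_0, y_0)| \geq \epsilon/2$, with $s_n \to t_0$, $s_n \neq t_0$, and $y_n \to y_0$. Passing to a subsequence I may take the $s_n$ strictly monotone (hence distinct), and then define $g$ to equal $y_0$ at $t_0$, to equal $y_n$ at $s_n$, to interpolate linearly between consecutive $s_n$, and to be constant beyond them. Because $s_n \to t_0$ and $y_n \to y_0$, the interpolated values squeeze to $y_0$, so $g$ is continuous (in particular at $t_0$). Then $S_h(g)(s_n) = h(s_n, y_n)$, and continuity of $S_h(g)$ at $t_0$ would force $h(s_n, y_n) \to h(t_0, y_0)$, contradicting $|h(s_n, y_n) - h(t_0, y_0)| \geq \epsilon/2$.

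For the ``moreover'' part, both statements reduce to compactness. Given a bounded set $A \subseteq C([a,b])$ with $\Vert g \Vert_\infty \leq M$ for all $g \in A$, every value $(t, g(t))$ lies in the compact rectangle $K = [a,b] \times [-M, M]$; continuity of $h$ makes it bounded on $K$, say $|h| \leq L$ there, whence $\Vert S_h(g) \Vert_\infty \leq L$ for all $g \in A$, giving boundedness (this is precisely the compactness argument already used for $C_f$ in the excerpt). For continuity in $\Vert \cdot \Vert_\infty$, take $g_k \to g$ uniformly; fixing $M$ with $\Vert g_k \Vert_\infty, \Vert g \Vert_\infty \leq M$ keeps all the relevant points in $K$, where $h$ is uniformly continuous. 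Given $\eta > 0$, choose the corresponding modulus $\rho > 0$ for $h$ on $K$; once $\Vert g_k - g \Vert_\infty < \rho$ we get $|h(t, g_k(t)) - h(t, g(t))| < \eta$ uniformly in $t$, so $\Vert S_h(g_k) - S_h(g) \Vert_\infty \leq \eta$, proving continuity.

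The main obstacle, and the only genuinely delicate step, is the construction in the second paragraph: one must produce a single continuous function whose graph threads through the ``bad'' points. The subtlety is the case $t_n = t_0$, where the discontinuity lives purely in the second variable and so cannot be detected by evaluating any continuous $g$ at the single abscissa $t_0$. The device that resolves this is to trade the first-variable continuity (obtained for free from constant inputs) for a small horizontal perturbation moving $t_0$ to a nearby distinct $s_n$, thereby converting a second-variable discontinuity into one visible along a genuine sequence $s_n \to t_0$ of distinct points, where a piecewise-linear interpolant is available.
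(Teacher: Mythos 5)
Your proof is correct. Note that the paper itself offers no proof of this statement --- it is quoted verbatim from the cited reference (\cite{ABM}, Theorem 6.1) --- so there is nothing internal to compare against; your argument is the standard one found in that literature: sufficiency by composing $h$ with the continuous map $t \mapsto (t,g(t))$, necessity by using constant inputs to get continuity in $t$ and then threading a piecewise-linear continuous $g$ through a sequence of bad points (with the horizontal perturbation correctly handling the case $t_n = t_0$, which is the one genuinely delicate point), and the boundedness and norm-continuity via uniform continuity of $h$ on compact rectangles $[a,b] \times [-M,M]$.
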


\begin{thm} (\cite{ABM}: Theorem 6.4) Let $0 < \gamma \leq 1$. Let $h :[a, b] \times {\Bbb R} \rightarrow {\Bbb R}$. The operator $S_{h}$ maps 
$Lip_{\gamma}([a,b])$ into itself and is bounded  w.r.t. the norm $\Vert \cdot \Vert_{Lip_{\gamma}}$ if and only if $h$ satisfies the mixed local H\"{o}lder-Lipschitz condition 
\[\vert h(s,u) - h(t,v) \vert  \leq k(r) ({\vert s-t \vert}^{\gamma} + \vert u - v \vert) \hspace{1.5cm} (a \leq s,t \leq b, \vert u \vert, \vert v \vert \leq r).\]  
In particular, the function $h$ is then necessarily continuous on $[a,b] \times {\Bbb R}$. 
\end{thm}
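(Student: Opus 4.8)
The plan is to prove the two implications separately, treating the mixed condition as the conjunction of a \emph{uniform Hölder estimate in the first variable} and a \emph{uniform Lipschitz estimate in the second variable}, and then recombining them by the triangle inequality.

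\emph{Sufficiency.} Suppose $h$ satisfies the mixed condition and let $g \in Lip_{\gamma}([a,b])$. Since $|g(x)| \le |g(a)| + Lip_{\gamma}(g)(b-a)^{\gamma}$, I can bound $\Vert g \Vert_{\infty}$ by a quantity $r$ depending only on $\Vert g \Vert_{Lip_{\gamma}}$. For $s,t \in [a,b]$ I would estimate directly
\[ |S_{h}(g)(s) - S_{h}(g)(t)| = |h(s,g(s)) - h(t,g(t))| \le k(r)\bigl(|s-t|^{\gamma} + |g(s)-g(t)|\bigr) \le k(r)(1+Lip_{\gamma}(g))|s-t|^{\gamma}, \]
so $Lip_{\gamma}(S_{h}(g)) \le k(r)(1+Lip_{\gamma}(g)) < \infty$ and $S_{h}(g) \in Lip_{\gamma}$. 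Tracking how $r$ and $|S_{h}(g)(a)|=|h(a,g(a))|$ depend on $\Vert g \Vert_{Lip_{\gamma}}$ shows that $S_{h}$ maps bounded sets to bounded sets, i.e. it is bounded.

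\emph{Necessity, first reduction.} Assume $S_{h}$ maps $Lip_{\gamma}$ into itself and is bounded. First I would feed in the constant functions $g \equiv u$: for $|u| \le r$ these form a bounded subset of $Lip_{\gamma}$ (of norm $|u|$), so their images $h(\cdot,u)=S_{h}(g)$ are uniformly bounded in $\Vert \cdot \Vert_{Lip_{\gamma}}$ by some $c(r)$. This yields at once the uniform first-variable Hölder estimate $|h(s,u)-h(t,u)| \le c(r)|s-t|^{\gamma}$ for $|u| \le r$, together with the boundedness of $h$ on every strip $[a,b]\times[-r,r]$. Writing $|h(s,u)-h(t,v)| \le |h(s,u)-h(t,u)| + |h(t,u)-h(t,v)|$, the whole theorem reduces to the second-variable estimate $|h(t,u)-h(t,v)| \le k(r)|u-v|$, uniformly in $t\in[a,b]$ and $|u|,|v|\le r$.

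\emph{Necessity, the core estimate.} I would argue by contradiction. If this fails for some $r$, there are $t_{n}\in[a,b]$ and $u_{n},v_{n}\in[-r,r]$ with $|h(t_{n},u_{n})-h(t_{n},v_{n})| \ge n|u_{n}-v_{n}|$. Since $h$ is bounded on the strip, the left-hand side is bounded, which \emph{forces} $|u_{n}-v_{n}|\to 0$; this is the decisive observation. Setting $\delta_{n}:=|u_{n}-v_{n}|^{1/\gamma}\to 0$, I would build for each $n$ a \emph{single-feature} function $g_{n}$: equal to $u_{n}$ to the left, running linearly from $u_{n}$ to $v_{n}$ over an interval $[t_{n},t_{n}+\delta_{n}]$ (choosing the side of $t_{n}$ with room, possible since $\delta_{n}\to 0$), and equal to $v_{n}$ to the right. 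A direct computation gives $Lip_{\gamma}(g_{n}) = |u_{n}-v_{n}|/\delta_{n}^{\gamma} = 1$, so the family $\{g_{n}\}$ is bounded in $Lip_{\gamma}$ (norms $\le r+1$). Yet, using the first-variable estimate to absorb the drift in the spatial variable,
\[ Lip_{\gamma}(S_{h}(g_{n})) \ge \frac{|h(t_{n},u_{n})-h(t_{n}+\delta_{n},v_{n})|}{\delta_{n}^{\gamma}} \ge \frac{n|u_{n}-v_{n}| - c(r)\delta_{n}^{\gamma}}{\delta_{n}^{\gamma}} = n - c(r) \longrightarrow \infty, \]
so $\{S_{h}(g_{n})\}$ is unbounded in $Lip_{\gamma}$, contradicting the boundedness of $S_{h}$. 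Taking $k(r)$ to be the larger of the two constants yields the mixed condition, and the joint continuity of $h$ on $[a,b]\times\Bbb R$ is then immediate from that modulus estimate, since it shows $h(s,u)\to h(t,v)$ as $(s,u)\to(t,v)$.

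\emph{Main obstacle.} The genuine point of the argument is the necessity of the second-variable Lipschitz estimate, and within it the recognition that one must exploit \emph{boundedness} of $S_{h}$ (not merely that it maps $Lip_{\gamma}$ into itself) and the observation that the failure forces $|u_{n}-v_{n}|\to 0$. Once this is in hand, the ``single feature'' device keeps $\Vert g_{n}\Vert_{Lip_{\gamma}}$ bounded while making the difference quotient of $S_{h}(g_{n})$ diverge; the only calculation requiring care is the verification that the interpolating feature has $\gamma$-Hölder seminorm exactly $|u_{n}-v_{n}|/\delta_{n}^{\gamma}$ at every pair of scales, which I would check by splitting into the constant, linear, and spanning cases.
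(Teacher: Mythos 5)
Your proof is correct, and since the paper only quotes this result from \cite{ABM} (Theorem 6.4) without reproducing a proof, there is nothing in the survey itself to compare it against; your argument is essentially the standard one from the cited reference (direct estimate for sufficiency; for necessity, constant test functions to get the H\"older-in-$t$ estimate, then piecewise-linear ``single feature'' test functions of unit $\gamma$-seminorm together with boundedness of $S_{h}$ to force the Lipschitz-in-$u$ estimate). The only cosmetic slip is that the negation of the second-variable estimate should be stated with strict inequality, $\vert h(t_{n},u_{n})-h(t_{n},v_{n})\vert > n\vert u_{n}-v_{n}\vert$, which is what guarantees $u_{n}\neq v_{n}$ so that $\delta_{n}$ is well defined and positive.
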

 
\begin{thm} (\cite{BBKM}: Theorem 3.8) \label{THME}
Suppose that $h:[a,b] \times {\Bbb R} \rightarrow \Bbb R$ is a given function. The following conditions are equivalent:
\begin{itemize}
\item[(i)]{the non-autonomous superposition operator $S_{h}$ maps the space $BV([a,b])$ into itself and is locally bounded;}
\item[(ii)]{for every $r>0$ there exists a constant $M_r >0$ such that for every $k \in \Bbb N$, every finite partition $a=t_0< \cdots < t_k=b$ 
of the interval $[a,b]$ and every finite sequence $u_0,u_1,..., u_k \in [-r,r] $ with $\sum_{i=1}^k \vert u_i - u_{i-1} \vert \leq r,$ the following inequalities 
hold \[ \sum_{i=1}^k \vert h(t_i,u_i) -h(t_{i-1}, u_{i}) \vert \leq M_r \text{ and } \sum_{i=1}^k \vert h(t_{i-1},u_i) -h(t_{i-1}, u_{i-1}) \vert \leq M_r .\]}
\end{itemize}
\end{thm}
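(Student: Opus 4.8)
The plan is to prove the two implications separately, the forward one (ii)$\Rightarrow$(i) being a direct estimate and the converse (i)$\Rightarrow$(ii) requiring the construction of suitable test functions in $BV([a,b])$. For (ii)$\Rightarrow$(i) I would fix $g \in BV([a,b])$ with $\Vert g \Vert_{BV} \le r$. Since $\vert g(t) \vert \le \vert g(a) \vert + V(g,[a,b]) = \Vert g \Vert_{BV} \le r$, the function takes values in $[-r,r]$, and for every partition $a = t_0 < \cdots < t_k = b$ the numbers $u_i := g(t_i)$ satisfy $\sum_{i=1}^k \vert u_i - u_{i-1} \vert \le V(g,[a,b]) \le r$; thus the hypotheses of (ii) apply to $(t_i,u_i)$. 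The triangle-inequality splitting
\[ \vert h(t_i,u_i) - h(t_{i-1},u_{i-1}) \vert \le \vert h(t_i,u_i) - h(t_{i-1},u_i) \vert + \vert h(t_{i-1},u_i) - h(t_{i-1},u_{i-1}) \vert \]
then gives $\sum_{i=1}^k \vert S_h(g)(t_i) - S_h(g)(t_{i-1}) \vert \le 2M_r$, and taking the supremum over all partitions yields $V(S_h(g),[a,b]) \le 2M_r$. To control the full norm I would bound the base point by applying (ii) to the trivial partition $a = t_0 < t_1 = b$ with $u_0 = 0$, $u_1 = u$, which forces $\vert h(a,u) - h(a,0) \vert \le M_r$ and hence $\vert S_h(g)(a) \vert \le \vert h(a,0) \vert + M_r$; since $h(a,0)$ is a fixed real number, $\Vert S_h(g) \Vert_{BV} \le \vert h(a,0) \vert + 3M_r$, which proves both invariance and local boundedness.

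For the converse I would argue by contradiction: assuming (ii) fails, I would produce a sequence $(g_n)$ bounded in $BV([a,b])$ whose images $S_h(g_n)$ have variations tending to infinity, contradicting the local boundedness asserted in (i). If (ii) fails, there is $r>0$ such that for every $M$ some admissible partition $a = t_0 < \cdots < t_k = b$ and sequence $u_0,\dots,u_k \in [-r,r]$ with $\sum \vert u_i - u_{i-1} \vert \le r$ make one of the two sums exceed $M$. To each such configuration I would associate the step function $g$ equal to $u_i$ on $(t_{i-1},t_i]$ with $g(a)=u_0$; then $V(g,[a,b]) = \sum \vert u_i - u_{i-1} \vert \le r$ and $\vert g(a) \vert \le r$, so $\Vert g \Vert_{BV} \le 2r$ uniformly in the configuration.

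The heart of the matter is to bound $V(S_h(g),[a,b])$ below by a quantity comparable to the two sums of (ii). Here I would refine the partition $\{t_i\}$ by inserting, immediately to the right of each node $t_{i-1}$, an auxiliary point $t_{i-1}+\delta$ at which $g$ has already jumped to $u_i$. Evaluating $S_h(g)$ along the refined partition produces, for each $i$, the two increments $\vert h(t_{i-1}+\delta,u_i) - h(t_{i-1},u_{i-1}) \vert$ and $\vert h(t_i,u_i) - h(t_{i-1}+\delta,u_i) \vert$, which, as $\delta \to 0^+$, should tend to the \emph{vertical} term $\vert h(t_{i-1},u_i) - h(t_{i-1},u_{i-1}) \vert$ and the \emph{horizontal} term $\vert h(t_i,u_i) - h(t_{i-1},u_i) \vert$ respectively, thereby exhibiting both sums inside $V(S_h(g))$.

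I expect the main obstacle to be precisely this passage to the limit. For a general $h$ the sections $h(\cdot,u)$ need not be continuous, so $h(t_{i-1}+\delta,u_i)$ converges only to the right limit $h(t_{i-1}^+,u_i)$, which may differ from the value $h(t_{i-1},u_i)$ occurring in (ii); the discrepancy is exactly the jump of $h(\cdot,u_i)$ at the fixed node $t_{i-1}$, and these jump terms must be accounted for. The saving structural fact is that (i) forces every section $h(\cdot,u) = S_h(c_u)$, with $c_u \equiv u$, to belong to $BV([a,b])$ and hence to be regulated, so each has only countably many discontinuities and total jump at most $M_r$. Reconciling the exact values appearing in (ii) with the one-sided limits produced by the construction, and showing that the failure of (ii) still forces $V(S_h(g_n)) \to \infty$ despite these jump contributions, is the delicate point on which the whole converse rests.
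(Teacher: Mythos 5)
A preliminary remark: the survey itself does not prove this theorem; it quotes it from \cite{BBKM}, so I compare your attempt with the argument given there. Your forward implication (ii)$\Rightarrow$(i) is correct and essentially the standard one: for $\Vert g\Vert_{BV}\le r$ the values $u_i=g(t_i)$ form an admissible configuration in (ii), the triangle-inequality splitting gives $V(S_h(g),[a,b])\le 2M_r$, and the trivial partition with $u_0=0$, $u_1=g(a)$ controls the base point, so $\Vert S_h(g)\Vert_{BV}\le \vert h(a,0)\vert +3M_r$.

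The converse contains a genuine gap --- precisely the one you flag and leave open --- and it cannot be repaired along the lines you sketch. Your construction only evaluates $S_h(g)$ at points $t_{i-1}+\delta$ at which $g$ has the value $u_i$, so letting $\delta\to 0^{+}$ you can reach at best the one-sided limits $h(t_{i-1}^{+},u_i)$, never the values $h(t_{i-1},u_i)$ that occur in (ii). The error $\sum_{i}\vert h(t_{i-1}^{+},u_i)-h(t_{i-1},u_i)\vert$ involves $k$ different sections of $h$ at $k$ different nodes; the regulated-sections observation bounds each single term by $K_r$ (the local bound from (i)), hence the whole sum only by $kK_r$, which is useless as $k\to\infty$. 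Worse, a uniform bound on this error is essentially the first inequality of (ii) itself, applied to the refined partition $t_0<t_0+\delta<t_1<t_1+\delta<\cdots$ with the doubled sequence $u_1,u_1,u_2,u_2,\dots$; under your contradiction hypothesis ((i) holds, (ii) fails) you have no tool to absorb it, so the argument is circular. The correct route --- the one taken in \cite{BBKM} --- avoids limits entirely by choosing step functions that attain the prescribed values \emph{at} the nodes. A single function cannot satisfy both $g(t_i)=u_i$ and $g(t_i)=u_{i+1}$, so one splits the indices by parity: let $g_{\mathrm{odd}}$ equal $u_i$ on the pairwise disjoint closed intervals $[t_{i-1},t_i]$ for odd $i$, with a single jump in each complementary gap; then $\Vert g_{\mathrm{odd}}\Vert_{BV}\le 2r$ and $V(S_h(g_{\mathrm{odd}}))\ge \sum_{i\ \mathrm{odd}}\vert h(t_i,u_i)-h(t_{i-1},u_i)\vert$, and together with the analogous $g_{\mathrm{even}}$ this bounds the first sum in (ii) by $2K_r$, with no limiting procedure and no contradiction argument. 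The second sum then follows from the first via $\vert h(t_{i-1},u_i)-h(t_{i-1},u_{i-1})\vert\le \vert h(t_{i-1},u_i)-h(t_i,u_i)\vert+\vert h(t_i,u_i)-h(t_{i-1},u_{i-1})\vert$, because the last terms are exactly the node increments of $S_h(g)$ for your own step function $g$ (which does satisfy $g(t_i)=u_i$ at every node) and are therefore dominated by $K_r$. This yields (ii) directly with $M_r=3K_r$.
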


In \cite{M}, the author presents necessary and sufficient conditions for the continuity of a non-autonomous superposition operator in the $BV([a,b])$ case:

\begin{thm} (\cite{M}: Theorem 10)
Suppose that $h:[a,b] \times {\Bbb R} \rightarrow \Bbb R$ is a function such that the superposition operator $S_{h}$ maps the space $BV([a,b])$ into itself. 
Let $x \in BV([a,b])$ be fixed. The following conditions are equivalent:
\begin{itemize}
\item[(i)]{the superposition operator $S_{h}$ is continuous at $x$;}
\item[(ii)]{for each $t \in [a,b]$, the function $u \in {\Bbb R}\mapsto h(t,u)-h(t,x(t))$ is continuous at $u = x(t)$ and for every $\epsilon > 0$ there exists $\delta > 0$ 
such that, for every $k \in \Bbb N$, every partition $a=t_0 <...< t_k =b$ of the interval $[a,b]$, and every finite sequence $u_0, u_1,...,u_k\in  [-\delta, \delta]$ 
with $\sum _{i=1}^k \vert u_i - u_{i-1}\vert  \leq \delta$, we have \[ \sum_{i=1}^k \vert [h(t_i ,u_i +x_i)-h(t_{i-1},u_i+x_{i-1})]-[h(t_i,x_i)-h(t_{i-1},x_{i-1})]  \vert \leq \epsilon , \] 
and  
\[ \sum_{i=1}^k \vert {h(t_{i-1} ,u_{i} +x_{i-1})-h(t_{i-1},u_{i-1}+x_{i-1})}  \vert \leq \epsilon , \] 
where $x_i = x(t_i), i\in \{0,...,k\}$.}
\end{itemize}
\end{thm}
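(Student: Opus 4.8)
The plan is to phrase everything in terms of the increment $u=y-x$ and to reduce the $BV$-continuity of $S_h$ at $x$ to a statement about the auxiliary function
\[ \Phi(t,s) = h(t,x(t)+s) - h(t,x(t)), \qquad (t,s) \in [a,b]\times{\Bbb R}, \]
so that $S_h(x+u)(t)-S_h(x)(t) = \Phi(t,u(t))$. Since $S_h$ maps $BV([a,b])$ into itself, both $S_h(x+u)$ and $S_h(x)$ lie in $BV([a,b])$, hence so does $v_u := \Phi(\cdot,u(\cdot))$; moreover $S_h$ is continuous at $x$ precisely when $\|v_u\|_{BV}\to 0$ as $\|u\|_{BV}\to 0$. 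The computational backbone is the elementary identity, valid for any partition $a=t_0<\cdots<t_k=b$ with $u_i=u(t_i)$ and $x_i=x(t_i)$,
\[ v_u(t_i)-v_u(t_{i-1}) = A_i + B_i, \]
where $A_i = [h(t_i,u_i+x_i)-h(t_{i-1},u_i+x_{i-1})]-[h(t_i,x_i)-h(t_{i-1},x_{i-1})]$ and $B_i = h(t_{i-1},u_i+x_{i-1})-h(t_{i-1},u_{i-1}+x_{i-1})$ are exactly the summands appearing in the two sums of condition (ii). I would verify this identity first, by expanding $v_u(t_i)-v_u(t_{i-1}) = \Phi(t_i,u_i)-\Phi(t_{i-1},u_{i-1})$ and inserting the intermediate term $\Phi(t_{i-1},u_i)$, which splits the increment into $A_i=\Phi(t_i,u_i)-\Phi(t_{i-1},u_i)$ and $B_i=\Phi(t_{i-1},u_i)-\Phi(t_{i-1},u_{i-1})$.

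For (ii)$\Rightarrow$(i) I would argue directly. Fix $\epsilon>0$ and take $\delta$ from (ii). If $\|u\|_{BV}\le\delta$, then for every $t$ one has $|u(t)|\le |u(a)|+V(u)=\|u\|_{BV}\le\delta$ and, for any partition, $\sum_i|u_i-u_{i-1}|\le V(u)\le\delta$; thus the values $\{u_i\}$ are admissible sequences for (ii). Using the identity and the triangle inequality,
\[ V(v_u) = \sup_P \sum_i |A_i+B_i| \le \sup_P\sum_i|A_i| + \sup_P\sum_i|B_i| \le 2\epsilon. \]
The base point is controlled by the pointwise continuity hypothesis at $t=a$, since $|v_u(a)|=|\Phi(a,u(a))|\to 0$ as $u(a)\to 0$. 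Hence $\|v_u\|_{BV}$ is small, i.e.\ $S_h$ is continuous at $x$.

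The substantial direction is (i)$\Rightarrow$(ii). The pointwise continuity at an arbitrary $t_0$ I would obtain by testing $S_h$ against a single spike $u=s\,\chi_{\{t_0\}}$: such a $u$ satisfies $\|u\|_{BV}=2|s|$, while $v_u$ is supported at $t_0$ with value $\Phi(t_0,s)$, whence $\|v_u\|_{BV}\ge 2|\Phi(t_0,s)|$; continuity of $S_h$ at $x$ then forces $\Phi(t_0,s)\to 0$ as $s\to 0$, which is exactly continuity of $h(t_0,\cdot)$ at $x(t_0)$, and this is why the requirement holds for \emph{every} $t_0$. For the two sum conditions I would argue by contradiction: if, say, the $B$-sum condition fails, there are $\epsilon_0>0$ and, for each $n$, a partition together with a sequence $u_i^{(n)}\in[-1/n,1/n]$ with $\sum_i|u_i^{(n)}-u_{i-1}^{(n)}|\le 1/n$ and $\sum_i|B_i^{(n)}|>\epsilon_0$. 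From these data I would manufacture genuine $BV$ perturbations $u^{(n)}$ with $\|u^{(n)}\|_{BV}\to 0$ but $\|v_{u^{(n)}}\|_{BV}\not\to 0$, contradicting (i). The idea is to insert, in a tiny one-sided neighbourhood of each partition point $t_{i-1}$, a short step pattern oscillating between the values $u_{i-1}^{(n)}$ and $u_i^{(n)}$, so that the variation accumulated by $v_{u^{(n)}}$ over the inserted points reproduces $\sum_i|B_i^{(n)}|$ up to a controllable error; the first sum is treated symmetrically, now keeping the perturbation constant equal to $u_i^{(n)}$ across each interval $[t_{i-1},t_i]$ and concentrating the jumps at the partition points, so that the recovered increments of $v_{u^{(n)}}$ approximate the $A_i^{(n)}$.

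The main obstacle is precisely this realization step. Because $h$ need not be continuous in its first variable, one cannot simply let the inserted auxiliary points collapse onto the partition points and pass to a limit. The remedy I would use is the extra regularity forced by the hypothesis: since $S_h$ sends every constant function into $BV([a,b])$, each map $t\mapsto h(t,c)$ lies in $BV([a,b])$ and therefore possesses one-sided limits everywhere; approaching each partition point strictly from one side and exploiting these one-sided limits lets me make the error between the realized variation and the target sum $\sum_i|B_i^{(n)}|$ (respectively $\sum_i|A_i^{(n)}|$) smaller than $\epsilon_0/2$, while the smallness of the $u_i^{(n)}$ and of $\sum_i|u_i^{(n)}-u_{i-1}^{(n)}|$ keeps $\|u^{(n)}\|_{BV}\to 0$. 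Making these error estimates uniform enough to preserve the fixed lower bound $\epsilon_0/2$ on $\|v_{u^{(n)}}\|_{BV}$ is the delicate, bookkeeping-heavy heart of the argument.
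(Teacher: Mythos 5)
The survey itself contains no proof of this theorem (it is quoted from Ma\'ckowiak's paper \cite{M}), so your proposal stands on its own merits. Much of it is correct: the reduction to $\Phi(t,s)=h(t,x(t)+s)-h(t,x(t))$, the identity $v_u(t_i)-v_u(t_{i-1})=A_i+B_i$, the direction (ii)$\Rightarrow$(i), and the spike argument giving the pointwise continuity of $h(t_0,\cdot)$ at $x(t_0)$ for every $t_0$ all check out. The gap is in the realization step for the two sums in (i)$\Rightarrow$(ii), and it is exactly the point you flag but do not resolve. Condition (ii) involves the values of $h(t_{i-1},\cdot)$ \emph{at} the partition points, whereas any test function whose oscillation is inserted strictly to one side of $t_{i-1}$ only ever samples $\Phi$ at nearby points, so what your construction recovers are the one-sided limits $\Phi(t_{i-1}\pm,c)$, not the values $\Phi(t_{i-1},c)$. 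The $BV$-ness of $t\mapsto\Phi(t,c)$ (which you correctly extract from the hypothesis) guarantees that these one-sided limits exist, but it gives no control whatsoever on the jump discrepancies $\vert\Phi(t_{i-1}\pm,u_i^{(n)})-\Phi(t_{i-1},u_i^{(n)})\vert$, let alone on their sum over $i$ uniformly in $n$: the points $t_{i-1}$ are handed to you by the failure of (ii), may sit exactly at discontinuities of the relevant maps $\Phi(\cdot,u_i^{(n)})$, and these maps change with $n$. So the claim that one can ``make the error smaller than $\epsilon_0/2$'' by approaching from one side is not justified, and the contradiction argument as proposed does not close.

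There is a repair that stays inside your framework but requires a different idea: compare \emph{two} test functions instead of approximating with one. Given admissible data $(t_i,u_i)$ with $\vert u_i\vert\le\delta$ and $\sum_i\vert u_i-u_{i-1}\vert\le\delta$, let $w$ be the step function equal to $u_i$ on $(t_{i-1},t_i]$ (so $w(t_i)=u_i$) and $z$ the step function equal to $u_i$ on $[t_{i-1},t_i)$ (so $z(t_{i-1})=u_i$); both have $BV$ norm at most $2\delta$. Their images $v_w=S_h(x+w)-S_h(x)$ and $v_z=S_h(x+z)-S_h(x)$ agree everywhere except on the finite set $\{t_0,\dots,t_{k-1}\}$, where $(v_z-v_w)(t_{i-1})=\Phi(t_{i-1},u_i)-\Phi(t_{i-1},u_{i-1})=B_i$. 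Hence $\sum_i\vert B_i\vert\le V(v_z-v_w)\le V(v_z)+V(v_w)$, and both variations are made small by continuity of $S_h$ at $x$; no one-sided limits enter because the two functions differ only at the partition points themselves. The $A$-sum then follows from your own identity: $\sum_i\vert A_i\vert\le\sum_i\vert A_i+B_i\vert+\sum_i\vert B_i\vert\le V(v_w)+\sum_i\vert B_i\vert$. This yields a direct (not by contradiction) proof of (i)$\Rightarrow$(ii), with no bookkeeping-heavy limiting construction at all.
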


The following result is a special case of Theorem 3.8 in \cite{BBKM} (when $X=BV_{\varphi}$ with the Young function $\varphi(t) = t^{p}$). 

\begin{thm} 
Let $p \geq 1$. Suppose that $h:[a,b] \times {\Bbb R} \rightarrow \Bbb R$ is a given function. Consider the following conditions:
\begin{itemize}
\item[({\it a})]{for every $r>0$ there exists a constant $M_r >0$ such that for every $k \in \Bbb N$, every finite partition $a=t_0< \cdots < t_k=b$ 
of the interval $[a,b]$ and every finite sequence $u_0,u_1,..., u_k \in [-r,r] $ with $\sum_{i=1}^k {\vert u_i - u_{i-1} \vert}^{p} \leq r$ the following inequalities hold 
\[ \sum_{i=1}^k {\vert h(t_i,u_i) -h(t_{i-1}, u_{i}) \vert}^{p} \leq M_r \text{ and } \sum_{i=1}^{k} {\vert h(t_{i-1},u_i) - h(t_{i-1}, u_{i-1}) \vert}^{p} \leq M_r.\]}
\item[({\it b})]{the non-autonomous superposition operator $S_{h}$ maps the space $BV_{p}([a,b])$ into itself.}
\end{itemize}
Then, $(a)$ implies $(b)$. Moreover, $S_{h}$ is locally bounded. 
\end{thm}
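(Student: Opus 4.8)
The plan is to establish the implication $(a)\Rightarrow(b)$ directly, by estimating the $p$-variation of $S_h(g)$ partition by partition, and then to obtain local boundedness by verifying that all the estimates can be made uniform on norm-bounded sets. Since the statement is precisely the $BV_p$ analogue of Theorem \ref{THME} (the case $p=1$), the argument runs parallel to the one in \cite{BBKM}, specialised to the Young function $\varphi(t)=t^p$.

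First I would fix $g \in BV_p([a,b])$. As $BV_p([a,b]) \subset B([a,b])$, the function $g$ is bounded; I set $r_0 = \Vert g \Vert_{\infty}$, $R = V_p(g,[a,b])$ and $r = \max\{r_0, R\}$. Given an arbitrary partition $a = t_0 < \cdots < t_k = b$, for each $i$ I would use the splitting
\[ h(t_i, g(t_i)) - h(t_{i-1}, g(t_{i-1})) = \big[h(t_i, g(t_i)) - h(t_{i-1}, g(t_i))\big] + \big[h(t_{i-1}, g(t_i)) - h(t_{i-1}, g(t_{i-1}))\big], \]
which separates the increment into a part where only the first variable moves and a part where only the second moves. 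Writing $u_i := g(t_i)$, applying the triangle inequality together with the convexity estimate $(\alpha+\beta)^p \leq 2^{p-1}(\alpha^p+\beta^p)$ (valid for $p\geq 1$, $\alpha,\beta\geq 0$), and summing over $i$, I obtain
\[ \sum_{i=1}^k \big| h(t_i, g(t_i)) - h(t_{i-1}, g(t_{i-1})) \big|^p \leq 2^{p-1}\Big( \sum_{i=1}^k |h(t_i, u_i) - h(t_{i-1}, u_i)|^p + \sum_{i=1}^k |h(t_{i-1}, u_i) - h(t_{i-1}, u_{i-1})|^p \Big). \]

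The key step is then to notice that the sequence $(u_i)$ satisfies exactly the hypotheses of $(a)$ for this value of $r$: indeed $u_i \in [-r_0, r_0] \subseteq [-r,r]$, and $\sum_{i=1}^k |u_i - u_{i-1}|^p = \sum_{i=1}^k |g(t_i) - g(t_{i-1})|^p \leq V_p(g,[a,b]) = R \leq r$. Consequently both sums on the right are bounded by $M_r$, so that each partition sum for $S_h(g)$ is at most $2^{p-1}(M_r + M_r) = 2^p M_r$. Taking the supremum over all partitions gives $V_p(S_h(g),[a,b]) \leq 2^p M_r < \infty$, and since $S_h(g)(a) = h(a,g(a)) \in {\Bbb R}$, this shows $S_h(g) \in BV_p([a,b])$, which is $(b)$.

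For local boundedness, I would take a set $A$ that is bounded in the $BV_p$-norm, say $\Vert g \Vert_{BV_p} \leq N$ for all $g \in A$. Then $|g(a)| \leq N$ and $|g(t)| \leq |g(a)| + V_p(g)^{1/p} \leq 2N$, while $V_p(g) \leq N^p$, so a single $r = \max\{2N, N^p\}$ serves for every $g \in A$, and the estimate above gives $V_p(S_h(g)) \leq 2^p M_r$ uniformly. The only point requiring care is the uniform bound on the value $|S_h(g)(a)| = |h(a,g(a))|$, because $(a)$ controls only differences; here I would chain the second inequality of $(a)$ applied with the trivial partition $t_0=a$, $t_1=b$ (which forces $|h(a,v)-h(a,w)| \leq M_r^{1/p}$ whenever $|v-w| \leq r^{1/p}$ and $v,w \in [-r,r]$) along a path of at most $\lceil N/r^{1/p}\rceil$ steps from $0$ to $g(a)$, obtaining $|h(a,g(a))| \leq |h(a,0)| + \lceil N/r^{1/p}\rceil M_r^{1/p}$, a bound independent of $g \in A$. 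Combining the two uniform estimates bounds $\Vert S_h(g) \Vert_{BV_p}$ on $A$. I expect this final chaining estimate for the value at $a$ to be the only genuinely delicate point; the variation estimate itself is just the convexity splitting fed into $(a)$.
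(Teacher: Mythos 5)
Your proof is correct. Note that the paper contains no argument of its own for this theorem: it is obtained there as the special case $\varphi(t)=t^{p}$ of Theorem 3.8 of \cite{BBKM}, the general result for Young-function spaces $BV_{\varphi}$. Your direct proof is the natural specialisation of that argument and all its steps check out: writing $u_i=g(t_i)$, the splitting of $h(t_i,u_i)-h(t_{i-1},u_{i-1})$ into a first-variable and a second-variable increment, the convexity bound $(\alpha+\beta)^{p}\leq 2^{p-1}(\alpha^{p}+\beta^{p})$, and the observation that $(u_i)$ meets the constraints of condition $(a)$ with $r=\max\{\Vert g\Vert_{\infty},V_{p}(g)\}$ yield $V_{p}(S_{h}(g))\leq 2^{p}M_{r}$, hence $(b)$. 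Your boundedness argument is also sound and in fact delivers more than the stated conclusion: you show that $S_{h}$ maps $BV_{p}$-norm-bounded sets into bounded sets, which implies local boundedness, and the chaining estimate for $\vert h(a,g(a))\vert$ correctly handles the one quantity the variation bound does not control, since $(a)$ only constrains differences of $h$. The trade-off between the two routes: yours is self-contained and elementary, with explicit constants; the paper's citation buys the statement for all Young functions at once, at the price of importing the machinery of \cite{BBKM}. One trivial repair: your $r$ (and likewise $\max\{2N,N^{p}\}$ in the boundedness part) can be $0$ when $g\equiv 0$ (resp.\ $N=0$), whereas $(a)$ is stated for $r>0$; just take the maximum with $1$, which is harmless because enlarging $r$ only weakens the constraints in $(a)$.
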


In \cite{BBKM}, the authors provide the following interesting example of an operator $S_{h}$ mapping $BV$ 
into itself without being bounded or continuous. They take 
\[h(t, u) = \left\{ \begin{array}{ll}
\frac{1}{u}  & \mbox{if } u \not=0\\ 
0 & otherwise.\\
\end{array}
\right.\]

As far as we know, up to now, no characterisation of the functions $h$ is known for the associated operator $S_{h}$ 
to map $BV_{p}([a,b])$ into itself, with $p >1$. \\

\indent
Other interesting results concerning the operator $S_h$ on the spaces mentioned above and on other spaces like, for instance,  Sobolev spaces 
and Besov spaces, also in higher dimensions, can be found, for example, in \cite{AZ1}, \cite{BLS1}, \cite{BLS2}. \\

\paragraph{Superposition operators on some spaces of Baire functions}

\begin{thm} Let $h: [0,1] \times {\Bbb R} \rightarrow {\Bbb R}$.
Then, the following statements are equivalent.
\begin{enumerate}
\item{For every positive function $g \in {\mathcal B}_{1}([0,1])$, for every positive $\epsilon$,  there exists a positive function 
$\delta$ on $[0,1]$, of Baire class one, such that $\vert t - s \vert < \min \{\delta(t), \delta(s)\}$ implies 
$\vert h(t, g(t)) - h (s, g(s)) \vert <  \epsilon$. }
\item{For every $g \in {\mathcal B}_{1}([0,1])$, the function $h(\cdot, g(\cdot)) \in {\mathcal B}_{1}([0,1])$.}
\end{enumerate}
\end{thm}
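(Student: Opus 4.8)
The plan is to read both statements as instances of the characterization of Baire class one maps in Theorem \ref{THMLEC0}, applied not to $h$ itself but to the one-variable function
$F_g := S_h(g) = h(\cdot, g(\cdot)) : [0,1] \to {\Bbb R}$
attached to each fixed $g \in {\mathcal B}_1([0,1])$. The crucial observation is that, for a fixed $g$, the $\epsilon$--$\delta$ clause in statement (1) is \emph{verbatim} condition (1) of Theorem \ref{THMLEC0} for the map $F_g$, while the membership $F_g \in {\mathcal B}_1([0,1])$ in statement (2) is condition (2) of that theorem. Since $[0,1]$ (with the Euclidean metric) and ${\Bbb R}$ are complete separable metric spaces, Theorem \ref{THMLEC0} applies to each $F_g$, so the whole proof reduces to a per-$g$ translation between the two conditions.

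First I would prove (1) $\Rightarrow$ (2). Fix $g \in {\mathcal B}_1([0,1])$. By (1), for every $\epsilon > 0$ there is a positive function $\delta$ on $[0,1]$ with $\vert F_g(t) - F_g(s) \vert < \epsilon$ whenever $\vert t - s \vert < \min\{\delta(t), \delta(s)\}$; this is exactly condition (1) of Theorem \ref{THMLEC0} for $F_g$, so $F_g = h(\cdot, g(\cdot))$ is of Baire class one, which is (2). Conversely, for (2) $\Rightarrow$ (1), fix $g \in {\mathcal B}_1([0,1])$; by (2) the map $F_g$ is Baire one, so Theorem \ref{THMLEC0} furnishes, for every $\epsilon > 0$, a positive gauge $\delta$ with the required property. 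To recover the \emph{full} strength of statement (1) I would then invoke Remark \ref{RMKLEC} (Corollary 33 of \cite{LEC}) to arrange that this $\delta$ can be taken of Baire class one.

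The only genuinely delicate point is this last upgrade: the raw characterization in Theorem \ref{THMLEC0} only produces a positive gauge $\delta$, whereas statement (1) explicitly demands a \emph{Baire one} gauge, so the direction (2) $\Rightarrow$ (1) rests essentially on Remark \ref{RMKLEC}. A second thing I would verify carefully is that the equivalence is carried out for each individual $F_g$, so that the family of test functions $g$ entering (1) and the family entering (2) are the same; since the argument is a pointwise-in-$g$ reformulation, one must ensure both conditions are quantified over the identical class of $g \in {\mathcal B}_1([0,1])$ before concluding the equivalence. Modulo these two observations, no estimate beyond the cited characterization is required.
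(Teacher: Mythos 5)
Your proposal is correct and is essentially the paper's own proof: the paper likewise applies Theorem \ref{THMLEC0} together with Remark \ref{RMKLEC} to the map $f(x)=h(x,g(x))$ for each fixed $g\in{\mathcal B}_{1}([0,1])$, obtaining exactly the per-$g$ equivalence you describe (including the upgrade of the gauge $\delta$ to Baire class one via Lecomte's result). Nothing further is needed.
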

\begin{proof} The proof follows from Theorem \ref{THMLEC0} and Remark \ref{RMKLEC} applied, for any given $g \in {\mathcal B}_{1}([0,1])$, 
to the map, clearly depending on $g$, $f(x) = h(x, g(x))$. More precisely, for any $g \in  {\mathcal B}_{1}([0,1])$, the following are equivalent:
\begin{enumerate}
\item{for every positive $\epsilon$,  there exists a positive Baire one function $\delta$ on $[0,1]$ such that $\vert t - s \vert < \min \{\delta(t), \delta(s)\}$ implies 
$\vert h(t, g(t)) - h (s, g(s)) \vert <  \epsilon$; }
\item{the function $h(\cdot, g(\cdot)) \in {\mathcal B}_{1}([0,1])$.}
\end{enumerate}
\end{proof}

\begin{prop} \label{PropBaire}
If $h: [0,1] \times {\Bbb R} \rightarrow {\Bbb R}$ is continuous then, for every $g \in {\mathcal B}_{1}([0,1])$, the function 
$h(\cdot, g(\cdot))$ is  in ${\mathcal B}_{1}([0,1])$. Hence, the non-autonomous superposition operator $S_{h}$ maps the space ${\mathcal B}_{1}([0,1])$ into itself. 
\end{prop}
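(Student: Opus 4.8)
The plan is to argue directly from the definition of a Baire one function as a pointwise limit of continuous functions, rather than verifying condition (1) of the equivalence theorem stated just above; the direct route avoids having to control the (possibly unbounded) values of $g$ by a positive $\delta$-function.

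First I would use that $g \in {\mathcal B}_{1}([0,1])$ to fix a sequence $\{g_{k}\}_{k \in {\Bbb N}}$ of continuous functions $g_{k}:[0,1] \rightarrow {\Bbb R}$ with $g_{k}(x) \rightarrow g(x)$ for every $x \in [0,1]$. Next, for each $k$ I set $f_{k}(x) = h(x, g_{k}(x))$ and observe that $f_{k}$ is continuous: the map $x \mapsto (x, g_{k}(x))$ is continuous from $[0,1]$ into $[0,1] \times {\Bbb R}$ since both of its coordinates are continuous, and postcomposing with the continuous function $h$ preserves continuity.

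The core step is to check that $f_{k} \rightarrow h(\cdot, g(\cdot))$ pointwise. Fixing $x \in [0,1]$, the reals $g_{k}(x)$ converge to $g(x)$, so the points $(x, g_{k}(x))$ converge to $(x, g(x))$ in $[0,1] \times {\Bbb R}$; continuity of $h$ at that single point then yields $f_{k}(x) = h(x, g_{k}(x)) \rightarrow h(x, g(x))$. Since only continuity of $h$ at each individual limit point $(x, g(x))$ is used, no uniform bound on the $g_{k}$ is required, which is exactly why the direct approach sidesteps the unboundedness issue.

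Finally, having exhibited $h(\cdot, g(\cdot))$ as the pointwise limit of the continuous functions $f_{k}$, I conclude that $h(\cdot, g(\cdot)) \in {\mathcal B}_{1}([0,1])$; as $g$ was arbitrary, the operator $S_{h}$ maps ${\mathcal B}_{1}([0,1])$ into itself. I do not expect any genuine obstacle here: the only delicate point is the observation, just noted, that pointwise continuity of $h$ at each limit point suffices and no joint uniform control over the approximating sequence is needed.
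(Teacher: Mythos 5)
Your proof is correct and is essentially the paper's argument: the paper disposes of the proposition in one line by citing the fact that the composition of a continuous function with a Baire one function (here the map $x \mapsto (x,g(x))$, which is Baire one into $[0,1]\times{\Bbb R}$) is Baire one, and your construction of the approximating sequence $f_k(x)=h(x,g_k(x))$ is precisely the standard proof of that fact, written out in full. No gap; the observation that only pointwise continuity of $h$ at each limit point $(x,g(x))$ is needed is exactly right.
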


\begin{proof}
This follows from the fact that the composition of a continuous function with a Baire one function is a Baire one function.  
\end{proof}

\begin{rmk}
Proposition \ref{PropBaire} cannot be reverted. A function $ h: [0, 1] \times \nobreak {\Bbb R} \rightarrow \nobreak {\Bbb R}$ need not be of Baire class one to generate a 
superposition operator $S_{h}$ in ${\mathcal B}_{1}$. For example, the function $h: = \chi_{\{0\} \times {\Bbb Q}}$ has the property that 
$h(t, g(t)) = \chi_{\{0\}}(t)$  if $g(0) \in {\Bbb Q}$, and $h(t,g(t)) \equiv 0$ if $g(0) \notin {\Bbb Q}$, therefore $S_{h}$ maps ${\mathcal B}_{1}$ into itself. 
Since the restriction $h(0, \cdot)$ is a Dirichlet function, $h$ cannot be of Baire class one, let alone continuous. 
\end{rmk}

\section{A type of linear operators: right composition operators}

\begin{defn}
Let $I$ and $J$ be compact intervals. Let $f:J \rightarrow I$. The operator \[g \mapsto g \circ f ,\] where $g: I  \rightarrow \Bbb R$ is an 
arbitrary function on $I$,  is the  {\it right composition operator} generated by $f$. We, hereby, denote it by $T_f$. Hence, for each function  
 $g: I \rightarrow \Bbb R$, $T_{f}(g): J \rightarrow {\Bbb R}$ is defined as $T_{f}(g)(\cdot) = g(f(\cdot))$.
\end{defn}

\noindent
As in the case of non-linear operators, we are interested in finding conditions on $f$ in order for $T_{f}$ to map a space of functions $X$ into itself.

\hfill 
\paragraph{Right composition operators on Lipschitz functions and some spaces of functions of bounded variation}
\hfill \break

In \cite{JOS} right $BV$ compositors are completely characterised. 
\begin{defn} (\cite{JOS})
Without loss of generality, take $[a,b] = [0,1]$. For a positive integer $N$, let \[J_{N} = \{X \subseteq [0,1]: X \text{ can be expressed as a union of 
$N$ intervals}\}\] (where the intervals may be open or closed at either end and  singletons are allowed as degenerate closed intervals). Since 
any interval is a union of two subintervals,  $J_{N} \subseteq J_{N+1}$. A function $f: [0,1] \rightarrow {\Bbb R}$ is said to be of $N$-bounded 
variation if $f^{-1}([c, d]) \in J_{N}$ for all $[c, d] \subset {\Bbb R}$. These functions are also called {\it pseudo-monotone} functions (see \cite{ABK}). 
Clearly, every monotone function is pseudo-monotone, indeed it belongs to $BV([0,1])$. Let $BV(N)$ be the set of all functions $f:[0,1] \rightarrow [0,1]$ 
of $N$-bounded variation, and $BV'(N)$ the set of all bounded functions $f:[0,1] \rightarrow {\Bbb R}$ of $N$-bounded variation.
\end{defn}

\begin{rmk}
Clearly, for every $N \in {\Bbb N}$, the following inclusion holds:
\[BV(N) \subseteq BV([0,1]). \]
The inclusion is strict as Example \ref{EXMPJO} shows. 
\end{rmk}

\begin{lem} (\cite{JOS}: Lemma 1) \label{LEMJO}
Every function in $BV'(N)$ is of bounded variation.
\end{lem}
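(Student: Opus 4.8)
The plan is to bound the Jordan variation of an arbitrary $f \in BV'(N)$ by a constant depending only on $N$ and on a bound for $|f|$, uniformly over all partitions; this is more than enough to conclude $f \in BV([0,1])$. Since $f$ is bounded, I first fix $M > 0$ with $|f(x)| \le M$ for all $x \in [0,1]$, so that $\{x : f(x) \ge y\} = f^{-1}([y, M])$ and $\{x : f(x) \le y\} = f^{-1}([-M, y])$ for every $y \in {\Bbb R}$. Both are preimages of \emph{closed} intervals, hence by the definition of $N$-bounded variation each is a union of at most $N$ intervals and so has at most $N$ connected components. It is essential here that the hypothesis applies to closed intervals, which is exactly why the bound $M$ on $|f|$ is used to rewrite the two half-line conditions as preimages of $[y,M]$ and $[-M,y]$.

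Next I fix a partition $0 = t_0 < t_1 < \cdots < t_m = 1$ and write $v_i = f(t_i)$. The elementary layer-cake identity
\[ \sum_{i=1}^{m} |v_i - v_{i-1}| = \int_{\Bbb R} c(y)\, dy, \]
where $c(y)$ denotes the number of indices $i$ for which $y$ lies strictly between $v_{i-1}$ and $v_i$, reduces the task to a uniform pointwise bound on $c(y)$. Since only the finitely many values $v_0, \ldots, v_m$ occur, for all but finitely many $y$ none of the $v_i$ equals $y$; for such $y$ the quantity $c(y)$ is exactly the number of sign changes of the sequence $(\mathrm{sgn}(v_i - y))_{i=0}^{m}$, that is, the number of maximal constant-sign blocks minus one.

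The heart of the argument is to bound the number of these blocks via the $N$-bounded variation of $f$. A maximal run of consecutive sample points with $v_i > y$ (a ``$+$-block'') consists of points lying in $\{f \ge y\}$, and two consecutive $+$-blocks are separated by at least one sample point $t_j$ with $v_j < y$, i.e.\ $t_j \notin \{f \ge y\}$. Since a point of $[0,1]$ lying strictly between two points of $\{f \ge y\}$ but not belonging to $\{f \ge y\}$ forces those two points into different connected components, distinct $+$-blocks inject into distinct components of $\{f \ge y\}$; hence there are at most $N$ of them. The symmetric argument applied to $\{f \le y\}$ bounds the number of ``$-$-blocks'' by $N$. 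Consequently the total number of blocks is at most $2N$, and $c(y) \le 2N - 1$ for all but finitely many $y$.

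Finally, since $c(y) = 0$ for $y \notin [-M, M]$, integrating the bound just obtained gives
\[ \sum_{i=1}^{m} |v_i - v_{i-1}| \le (2N - 1)\cdot 2M, \]
a bound independent of the chosen partition. Taking the supremum over all partitions yields $V(f,[0,1]) \le 2(2N-1)M < \infty$, so $f \in BV([0,1])$. The step I expect to be the main obstacle is the combinatorial identification of same-sign blocks with distinct components of the closed-interval preimages; the only technical care needed is to exclude the finitely many levels $y$ that coincide with some $v_i$, which is harmless since they form a null set and do not affect the integral.
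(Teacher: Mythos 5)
Your proof is correct. Note that the paper itself offers no argument for this lemma: it is stated with a citation to Josephy \cite{JOS}, so there is nothing in the text to compare against line by line; your write-up is a legitimate self-contained proof of the cited fact. The structure is sound: the layer-cake identity $\sum_{i=1}^{m}|v_i-v_{i-1}|=\int_{\Bbb R} c(y)\,dy$ is valid (each term is the length of the interval between $v_{i-1}$ and $v_i$, and Tonelli applies to the finite sum of indicators), and the key combinatorial step is handled correctly: since $|f|\le M$, the sets $\{f\ge y\}=f^{-1}([y,M])$ and $\{f\le y\}=f^{-1}([-M,y])$ are preimages of closed intervals, hence have at most $N$ connected components each; two distinct $+$-blocks of the sample sequence are separated by a sample point outside $\{f\ge y\}$, and a connected subset of ${\Bbb R}$ containing points on both sides of that separator would contain it, so distinct blocks indeed meet distinct components. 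This gives at most $N$ blocks of each sign, hence $c(y)\le 2N-1$ off the finite exceptional set $\{v_0,\dots,v_m\}$, and since $c$ vanishes outside $[-M,M]$ you get the partition-independent bound $V(f,[0,1])\le 2M(2N-1)$, which is what the lemma needs (and a little more, since it is an explicit quantitative bound in terms of $N$ and $M$). Josephy's own argument is of the same crossing-counting flavor, so your proof can be regarded as a clean continuous (Banach-indicatrix style) formulation of it rather than a genuinely different strategy.
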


The converse of Lemma \ref{LEMJO} does not hold as the following example shows.

\begin{exmp} (\cite{ABK}: Example 2.1.2) \label{EXMPJO} 
Let 
\[f(x) = \left\{ \begin{array}{ll}
x^{2} {\sin}^{2}(\frac{1}{x})  & \mbox{if } 0< x \leq 1\\ 
0 & \mbox{if } x =0.\\
\end{array}
\right.\]
Then, $f$ is in $BV([0,1])$ since $f'$ exists and is bounded. But $f$ is not pseudo-monotone as $f^{-1}(\{0\})= \{0\} \cup \{\frac{1}{n\pi}: n \in {\Bbb N}\}$.   
\end{exmp}

\begin{thm} (\cite{JOS}: Theorem 3; \cite{ABK}: Theorem 2.1.4)  For $f: [0,1] \rightarrow [0,1]$ , the composition $g \circ f$ belongs to $BV([0,1])$ 
for all $g \in BV'([0,1])$ if and only if $f \in BV'(N)$ for some $N$.  
Moreover, if $f \in BV^{'}(N)$, then $T_{f}$ is bounded. 
\end{thm}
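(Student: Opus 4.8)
The plan is to prove the two implications separately, relying throughout on the reformulation that a set $X \subseteq [0,1]$ belongs to $J_{N}$ exactly when it has at most $N$ connected components (each connected subset of $\Bbb R$ being an interval, possibly a single point); thus $f \in BV'(N)$ means that $f^{-1}([c,d])$ has at most $N$ components for every $[c,d] \subset \Bbb R$.

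For the sufficiency, together with the boundedness clause, I would assume $f \in BV'(N)$ and fix $g \in BV([0,1])$. Writing $g$ as a difference of two nondecreasing functions (minimal Jordan decomposition), it suffices to bound $V(g\circ f)$ when $g$ is nondecreasing. Fix a partition $0 = t_{0} < \cdots < t_{m} = 1$ and set $u_{i} = f(t_{i}) \in [0,1]$. Since $g$ is nondecreasing, $|g(u_{i}) - g(u_{i-1})|$ is the $dg$-measure of the interval with endpoints $u_{i-1}, u_{i}$, so summing and exchanging sum with integral gives $\sum_{i} |g(u_{i}) - g(u_{i-1})| = \int_{[0,1]} c(s)\, dg(s)$, where $c(s)$ counts the indices $i$ for which $s$ lies between $u_{i-1}$ and $u_{i}$. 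The decisive step is the bound $c(s) \leq 2N$ for every level $s$: because $f \in BV'(N)$, the set $\{t : f(t) \geq s\} = f^{-1}([s,1])$ is a union of at most $N$ intervals, so along the increasing nodes $t_{0} < \cdots < t_{m}$ the sequence whose $i$-th term equals $1$ when $f(t_{i}) \geq s$ and $0$ otherwise has at most $N$ blocks of consecutive $1$'s, hence at most $2N$ sign changes; and $c(s)$ is precisely the number of these changes. Consequently $V(g\circ f) \leq 2N\, V(g)$, which shows $g \circ f \in BV([0,1])$ and, recombining the two Jordan parts and using $\|g\|_{\infty} \leq \|g\|_{BV}$, that $\|T_{f}g\|_{BV} \leq (1 + 2N)\|g\|_{BV}$, so $T_{f}$ is bounded.

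For the necessity I would first note that the hypothesis already makes each preimage have finitely many components: applying it to $g = \chi_{[c,d]} \in BV([0,1])$ gives $\chi_{f^{-1}([c,d])} = g \circ f \in BV([0,1])$, and the indicator of a set is of bounded variation only if that set is a finite union of intervals. The essential point, however, is the \emph{uniformity} of the bound over all $[c,d]$, and for this I would invoke the Closed Graph Theorem. The map $T_{f} : BV([0,1]) \to BV([0,1])$ is linear and, by hypothesis, defined on all of $BV([0,1])$; since convergence in the $BV$-norm forces pointwise convergence, whenever $g_{n} \to g$ and $T_{f}g_{n} \to h$ in $BV$ one has $h(x) = \lim_{n} g_{n}(f(x)) = g(f(x)) = (T_{f}g)(x)$, so the graph of $T_{f}$ is closed. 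As $BV([0,1])$ is a Banach space, $T_{f}$ is bounded, say $\|g\circ f\|_{BV} \leq C\|g\|_{BV}$ for every $g$. Testing this on $g = \chi_{[c,d]}$: if $f^{-1}([c,d])$ has $k$ components, picking $r_{1} < s_{1} < \cdots < s_{k-1} < r_{k}$ with $f(r_{j}) \in [c,d]$ and $f(s_{j}) \notin [c,d]$ makes $g\circ f$ alternate between $1$ and $0$, whence $V(g\circ f) \geq 2(k-1)$; since $\|g\|_{BV} \leq 3$, this gives $2(k-1) \leq 3C$, a bound independent of $[c,d]$. Hence $f \in BV'(N)$ with $N = \lfloor 3C/2 \rfloor + 1$.

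The main obstacle is precisely this step from finiteness to \emph{uniform} finiteness in the necessity: the pointwise hypothesis supplies no quantitative estimate, and it is the completeness of $BV([0,1])$, through the Closed Graph Theorem, that produces a single constant $C$ controlling every indicator test at once. On the sufficiency side the one delicate point is the crossing count $c(s) \leq 2N$, which is exactly where the pseudo-monotonicity $f \in BV'(N)$ enters; the surrounding Stieltjes representation and interchange of sum and integral are routine, but need mild care at the jumps of $g$.
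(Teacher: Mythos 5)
First, a point of order: the paper contains no proof of this statement at all --- it is a survey item, quoted with attribution to \cite{JOS} (Theorem 3) and \cite{ABK} (Theorem 2.1.4), and the text immediately moves on. So there is no internal proof to compare against; your proposal has to be judged on its own merits and against the cited sources.

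On those terms, your proof is correct, and the necessity half is genuinely different in spirit from the classical treatment. Your sufficiency argument is essentially the standard crossing-count (Banach-indicatrix style) argument: Jordan decomposition, then the bound on the number of times a level $s$ is crossed, which is exactly where pseudo-monotonicity enters. The one imprecision is the claim that $c(s)$ is \emph{precisely} the number of sign changes of the sequence $\chi_{\{f(t_i)\geq s\}}$: if ``lies between'' means membership in the closed interval with endpoints $u_{i-1},u_i$, this fails when $s$ coincides with some node $u_j$ (no sign change, yet $s$ is ``between''). The clean repair is to use the half-open intervals $\bigl(\min\{u_{i-1},u_i\},\max\{u_{i-1},u_i\}\bigr]$, for which membership of $s$ is \emph{equivalent} to a sign change; moreover, for nondecreasing $g$ the interval function $(\alpha,\beta]\mapsto g(\beta)-g(\alpha)$ is finitely additive, and finite additivity over the atoms determined by $u_0,\dots,u_m$ is all you need --- this avoids invoking a Lebesgue--Stieltjes measure, which would require a right-continuous modification of $g$. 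With that reading, your constant $V(g\circ f)\leq 2N\,V(g)$ and the operator bound $(1+2N)\|g\|_{BV}$ are exactly right. For necessity, the literature obtains the uniform $N$ by elementary, constructive means (manufacturing a single bad $g\in BV$ from intervals whose preimages have more and more components), whereas you get uniformity softly from the Closed Graph Theorem on the Banach space $BV([0,1])$ and then test the operator norm on indicators $\chi_{[c,d]}$. Your route is shorter and rigorous (convergence in $\|\cdot\|_{BV}$ dominates uniform convergence, so the graph is closed), and it has the bonus of showing that $T_f$ is automatically bounded \emph{whenever} it maps $BV([0,1])$ into itself, which subsumes the theorem's final clause; what it gives up is constructive content, and it leans on completeness of $BV([0,1])$, which the elementary argument does not need.
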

\noindent
All previous results are proved, in \cite{ABK} and \cite{JOS}, on the unit interval $[0,1]$, but, of course, it is the same thing if we work on any interval 
$[a,b]$. In this general setting, we also prove the following results:

\begin{thm} \label{THMLIPR}
The following conditions are equivalent:
\begin{enumerate}
\item[({\it a})]{The function $f: [a,b] \rightarrow [a,b]$ satisfies a Lipschitz condition on $[a,b]$.}
\item[({\it b})]{The operator $T_f$ maps the space $Lip([a,b])$ into itself.}
\end{enumerate}
Moreover, the operator $T_f$ is automatically bounded.
\end{thm}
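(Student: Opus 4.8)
The plan is to prove the two implications separately and then read off boundedness from the estimate produced in the forward direction.

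For the implication (\textit{a}) $\Rightarrow$ (\textit{b}) I would argue directly from the definitions. Suppose $f$ is Lipschitz on $[a,b]$ with constant $L = Lip(f)$, and let $g \in Lip([a,b])$ be arbitrary. Since $f$ maps $[a,b]$ into $[a,b]$, the composition $g \circ f$ is well defined on $[a,b]$, and for all $x, y \in [a,b]$ one has
\[ |g(f(x)) - g(f(y))| \le Lip(g)\, |f(x) - f(y)| \le L\, Lip(g)\, |x - y|. \]
Hence $g \circ f$ is Lipschitz with $Lip(g \circ f) \le L\, Lip(g)$, so $T_f(g) = g \circ f \in Lip([a,b])$.

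The converse (\textit{b}) $\Rightarrow$ (\textit{a}) rests on a single observation: the identity map $\iota(t) = t$ is Lipschitz on $[a,b]$ (with constant $1$), hence $\iota \in Lip([a,b])$. Applying the hypothesis to this particular element, $T_f(\iota) = \iota \circ f = f$ must then lie in $Lip([a,b])$; that is, $f$ satisfies a Lipschitz condition. This is the crux of the argument: the identity serves as a \emph{test function} that recovers $f$ itself. I expect this to be the only step requiring any insight, and it turns out to be immediate once one thinks of probing $T_f$ with $\iota$; the remaining steps are routine estimates with no real obstacle.

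Finally, for boundedness I would note that $T_f$ is linear, so it suffices to exhibit a constant $C$ with $\|T_f(g)\|_{Lip} \le C\, \|g\|_{Lip}$ for all $g \in Lip([a,b])$, writing $\|g\|_{Lip} = |g(a)| + Lip(g)$. Setting $L = Lip(f)$, the computation from the first part gives $Lip(g \circ f) \le L\, Lip(g)$. For the pointwise term, since $f(a) \in [a,b]$ I would control $g(f(a))$ through $g(a)$ via
\[ |g(f(a))| \le |g(a)| + |g(f(a)) - g(a)| \le |g(a)| + Lip(g)\, |f(a) - a| \le |g(a)| + (b-a)\, Lip(g). \]
Combining the two bounds and using $|g(a)| \le \|g\|_{Lip}$ and $Lip(g) \le \|g\|_{Lip}$ yields
\[ \|T_f(g)\|_{Lip} = |g(f(a))| + Lip(g \circ f) \le \bigl(1 + (b-a) + L\bigr)\,\|g\|_{Lip}, \]
so $T_f$ is a bounded linear operator and, in particular, maps bounded sets into bounded sets.
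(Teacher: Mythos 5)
Your proposal is correct and follows essentially the same route as the paper: the forward implication and boundedness via the inequality $Lip(g\circ f)\leq Lip(g)\,Lip(f)$, and the converse by testing $T_f$ on the identity map. The only difference is that you spell out the control of the evaluation term $\vert g(f(a))\vert$ in the norm estimate, which the paper leaves implicit.
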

 \begin{proof}
The implication $(a) \Rightarrow (b)$ and the boundedness of $T_{f}$ follow from the inequality $Lip(g \circ f) \leq  Lip(g) Lip(f)$.\\
The implication $(b) \Rightarrow (a)$ follows from the fact that the identity $f(x) = x$ is Lipschitz continuous. 
 \end{proof}
 
\begin{prop} 
Let $0 < \gamma < 1$. If  the function $f: [a,b]\rightarrow [a,b]$ satisfies a Lipschitz condition on $[a,b]$ then the operator $T_f$ maps the space 
$Lip_{\gamma}([a,b])$ into itself. Moreover, in this case the operator $T_f$ is automatically bounded.
\end{prop}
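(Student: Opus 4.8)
The plan is to proceed exactly as in the proof of Theorem \ref{THMLIPR}, replacing the Lipschitz submultiplicativity $Lip(g \circ f) \leq Lip(g)\,Lip(f)$ by its H\"older analogue. The single computation driving everything is the estimate
\[ Lip_{\gamma}(g \circ f) \leq Lip(f)^{\gamma}\, Lip_{\gamma}(g), \]
which I would obtain by writing, for arbitrary $x \neq y$ in $[a,b]$,
\[ \frac{\vert g(f(x)) - g(f(y)) \vert}{\vert x - y \vert^{\gamma}} \leq Lip_{\gamma}(g)\, \frac{\vert f(x) - f(y) \vert^{\gamma}}{\vert x - y \vert^{\gamma}} \leq Lip_{\gamma}(g)\, Lip(f)^{\gamma}, \]
where the first inequality is the $\gamma$-H\"older bound on $g$ and the second uses $\vert f(x) - f(y) \vert \leq Lip(f)\,\vert x - y \vert$ raised to the power $\gamma$. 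Taking the supremum over all $x \neq y$ yields the claimed inequality.

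From this the action statement follows at once: if $g \in Lip_{\gamma}([a,b])$, then $Lip_{\gamma}(g \circ f) < +\infty$, so $T_f(g) = g \circ f \in Lip_{\gamma}([a,b])$, and hence $T_f$ maps the space into itself.

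For boundedness, which (since $T_f$ is linear) amounts to a norm estimate $\Vert T_f(g) \Vert_{Lip_{\gamma}} \leq C\, \Vert g \Vert_{Lip_{\gamma}}$, the only extra ingredient is control of the base-point term $\vert g(f(a)) \vert$. Since $f(a) \in [a,b]$, I would bound
\[ \vert g(f(a)) \vert \leq \vert g(a) \vert + Lip_{\gamma}(g)\, \vert f(a) - a \vert^{\gamma} \leq \vert g(a) \vert + (b-a)^{\gamma}\, Lip_{\gamma}(g), \]
and then combine this with the previous inequality to obtain
\[ \Vert g \circ f \Vert_{Lip_{\gamma}} = \vert g(f(a)) \vert + Lip_{\gamma}(g \circ f) \leq \vert g(a) \vert + \big[ (b-a)^{\gamma} + Lip(f)^{\gamma} \big] Lip_{\gamma}(g) \leq C\, \Vert g \Vert_{Lip_{\gamma}}, \]
with $C = \max\{ 1,\, (b-a)^{\gamma} + Lip(f)^{\gamma} \}$. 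This exhibits a finite operator norm, so $T_f$ sends bounded sets to bounded sets.

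I expect no serious obstacle: the argument is genuinely routine once the H\"older-composition inequality is in place. The one point worth flagging is that, in contrast to Theorem \ref{THMLIPR}, this proposition is stated as a one-way implication, and I would not attempt a converse. Testing $T_f$ on the identity $g(x) = x \in Lip_{\gamma}([a,b])$ only forces $f = T_f(g) \in Lip_{\gamma}([a,b])$, that is, $f$ to be $\gamma$-H\"older, which for $\gamma < 1$ is strictly weaker than Lipschitz; thus the Lipschitz hypothesis on $f$ is sufficient but not necessary.
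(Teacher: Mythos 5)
Your proof is correct and takes essentially the same approach as the paper, whose entire proof consists of citing the single estimate $Lip_{\gamma}(g \circ f) \leq Lip_{\gamma}(g)\,{Lip(f)}^{\gamma}$; you merely fill in the derivation of that inequality and the explicit operator-norm bound (including the base-point term) that the paper leaves implicit. Your closing remark, that testing on the identity only forces $f \in Lip_{\gamma}([a,b])$ so the Lipschitz hypothesis is sufficient but not necessary, is a correct observation consistent with the proposition being stated as a one-way implication.
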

\begin{proof}
The estimate $Lip_{\gamma}(g \circ f) \leq  Lip_{\gamma}(g) {Lip(f)}^{\gamma}$ implies that the operator $T_f$ maps the space $Lip_{\gamma}([a,b])$ into itself and it is bounded. 
\end{proof}

\begin{prop} 
If the function $f: [a,b]\rightarrow [a,b]$ is absolutely continuous and non-decreasing, then the operator $T_f$ maps the space $AC([a,b])$ into itself.
Moreover, in this case the operator $T_f$ is automatically bounded.
\end{prop}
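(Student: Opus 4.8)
The plan is to prove the two assertions separately, relying throughout on the classical $\varepsilon$--$\delta$ definition of absolute continuity and on the equivalence of the $AC$- and $BV$-norms recalled above. For the action statement $T_f(AC([a,b])) \subseteq AC([a,b])$, I would fix $g \in AC([a,b])$ and $\varepsilon > 0$ and chase the definition. First, since $g$ is absolutely continuous, choose $\eta > 0$ so that for any finite family of non-overlapping subintervals $\{[u_j, v_j]\}$ of $[a,b]$ with $\sum_j (v_j - u_j) < \eta$ one has $\sum_j \vert g(v_j) - g(u_j) \vert < \varepsilon$. Next, since $f$ is absolutely continuous, choose $\delta > 0$ so that $\sum_i (y_i - x_i) < \delta$ forces $\sum_i \vert f(y_i) - f(x_i) \vert < \eta$.

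Now take any non-overlapping family $\{[x_i, y_i]\}$ in $[a,b]$ with $\sum_i(y_i - x_i) < \delta$, ordered so that $x_1 < y_1 \le x_2 < y_2 \le \cdots$. Here is the crucial point: because $f$ is non-decreasing, $f(x_1) \le f(y_1) \le f(x_2) \le f(y_2) \le \cdots$, so the image intervals $[f(x_i), f(y_i)]$ are again non-overlapping (allowing degenerate singletons), and their total length $\sum_i (f(y_i) - f(x_i)) = \sum_i \vert f(y_i) - f(x_i) \vert$ is $< \eta$. Applying the absolute continuity of $g$ to this image family gives $\sum_i \vert g(f(y_i)) - g(f(x_i)) \vert < \varepsilon$, which is precisely the absolute continuity of $g \circ f = T_f(g)$.

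For boundedness, since $T_f$ is linear it suffices to produce a constant $C$ with $\Vert T_f(g) \Vert_{BV} \le C \Vert g \Vert_{BV}$. I would estimate the variation directly: for any partition $a = t_0 < \cdots < t_n = b$, monotonicity gives $f(a) = f(t_0) \le \cdots \le f(t_n) = f(b)$, so the points $f(t_i)$, after discarding repetitions, partition a subinterval of $[a,b]$, whence $\sum_i \vert g(f(t_i)) - g(f(t_{i-1})) \vert \le V(g, [a,b])$; taking the supremum over partitions yields $V(T_f(g), [a,b]) \le V(g,[a,b])$. Combining this with $\vert g(f(a)) \vert \le \vert g(a) \vert + V(g,[a,b])$ produces $\Vert T_f(g) \Vert_{BV} \le 2 \Vert g \Vert_{BV}$, so $T_f$ is bounded.

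The argument is essentially routine; the one place that genuinely requires care---and where the hypotheses are used in an essential way---is the preservation of the non-overlapping property under $f$. This is exactly what monotonicity of $f$ buys us: it guarantees that images of disjoint intervals stay disjoint, so that the $\varepsilon$--$\delta$ condition for $g$ can be applied to the image family. Without monotonicity the composition of two absolutely continuous functions need not be absolutely continuous (the $\sqrt{x}$ example recalled earlier already exhibits this failure at the level of $BV$), so this step is precisely where the proposition would otherwise break down.
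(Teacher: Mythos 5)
Your proof is correct and follows essentially the same route as the paper: the action on $AC([a,b])$ is established by the identical chained $\varepsilon$--$\delta$ argument, with monotonicity of $f$ guaranteeing that images of non-overlapping intervals remain non-overlapping, and the boundedness rests on the same key inequality $V(g \circ f, [a,b]) \leq V(g, [a,b])$. The only cosmetic difference is that you package this inequality as an explicit operator-norm bound $\Vert T_f(g) \Vert_{BV} \leq 2 \Vert g \Vert_{BV}$, whereas the paper uses it to verify sequential continuity $\Vert T_f(g_n) - T_f(g) \Vert_{AC} \rightarrow 0$; for the linear operator $T_f$ these amount to the same thing.
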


\begin{proof}
Let $g \in AC([a,b])$ and let $\epsilon >0$. Then, there exists $\delta >0$ such that for all collections $\{ [a_1,b_1],\dots ,[a_n,b_n]\}$ of pairwise 
non-overlapping subintervals of $[a,b]$, the condition \[ \sum_{k=1}^{n} (b_k-a_k) < \delta \] implies that \[ \sum_{k=1}^n \vert g(b_k)-g(a_k) \vert <\epsilon .\] As $f \in AC([a,b])$ 
there exists $\nu >0$ such that, for all collections $\{ [a'_1,b'_1],\dots ,[a'_n,b'_n]\}$ of pairwise non-overlapping subintervals of $[a,b]$, the condition \[ \sum_{k=1}^{n} (b'_k-a'_k) < \nu \] 
implies that \[ \sum_{k=1}^n \vert f(b'_k)-f(a'_k) \vert < \delta \hspace{0.6cm}(\bullet).\] 
We may assume that the intervals $[a_{k}, b_{k}]$ are all non-degenerated and that  $a \leq a'_1< b'_1< \dots < a'_n < b'_n \leq b$ and, as $f$ is non-decreasing, 
we have $a \leq f(a'_1) \leq f(b'_1) \leq \dots \leq f(a'_n) \leq f(b'_n) \leq b$. Hence $\{ [f(a'_1),f(b'_1)],\dots ,[f(a'_n), f(b'_n)]\}$ is a collection of pairwise non-overlapping 
subintervals of $[a,b]$ satisfying condition ($\bullet$), and then it follows that \[ \sum_{k=1}^n \vert g(f(b'_k))-g(f(a'_k)) \vert < \epsilon.\] As $\epsilon$ is arbitrary $g \circ f$ 
is absolutely continuous, that is $g \circ f \in AC([a,b]).$ \\
Next, we prove the continuity (boundedness).  Let $\{g_{n}\}_{n \in {\Bbb N}}$ and $g$ be functions in $AC([a,b])$ with 
\[\lim_{n \rightarrow + \infty} {\Vert g_{n} - g \Vert}_{AC} =0.\]
As 
\begin{align*}
 {\Vert T_{f}(g_{n})  - T_{f}(g) \Vert}_{AC} & =   {\Vert g_{n} \circ f  - g \circ f \Vert}_{BV} \\ 
 & =   \vert (g_{n} \circ f)(a) - (g \circ f)(a) \vert + V((g_{n} -g) \circ f, [a,b])\\
 & \leq   \vert (g_{n} \circ f)(a) - (g \circ f)(a) \vert + V(g_{n} -g, [a,b])\\
 & \leq  \vert (g_{n} \circ f)(a) - (g \circ f)(a) \vert + {\Vert g_{n} - g \Vert}_{AC}\\
 \end{align*}
 and 
 \[\lim_{n \rightarrow + \infty} \vert (g_{n} \circ f)(a) - (g \circ f)(a) \vert = 0,\] 
we have 
\[\lim_{n \rightarrow + \infty} {\Vert T_{f}(g_{n})  - T_{f} (g) \Vert}_{AC}  =0.\]
Hence, the thesis. 
\end{proof}

So, we have a condition, both necessary and sufficient, for the operator $T_f$ to map  $Lip([a,b])$ into itself, and we have shown sufficient conditions 
for the operator $T_f$ to map  $Lip_{\gamma}([a,b])$ and $AC([a,b])$ into themselves. 
\hfill \break

\paragraph{Right composition operators on some spaces of functions}

\begin{defn} (\cite{Z})
A function $f: {\Bbb R} \rightarrow {\Bbb R}$ is called $k-$continuous if for every positive function $\epsilon$ there is a positive function $\delta$ such that 
for any $x,y \in {\Bbb R}$, $\vert x -y \vert < \min \{\delta(x), \delta(y)\}$ implies $\vert f(x) - f(y) \vert  <  \min \{ \epsilon(f(x)) , \epsilon(f(y))\}.$
\end{defn}
 \noindent
Note that every continuous function is $k-$continuous. 

\begin{prop} (\cite{FC3}: Lemma 3.3) \label{propk}
The following properties hold:
\begin{enumerate}
\item{If $f$ and $g$ are $k-$continuous functions, then so is the sum $f+g$.}
\item{If $f$ and $g$ are $k-$continuous functions, then so is the product $fg$.}
\end{enumerate}
\end{prop}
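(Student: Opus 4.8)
The plan is to prove the two items in parallel, deriving the product from the same gauge construction used for the sum. Fix $k$-continuous functions $f,g:\mathbb{R}\to\mathbb{R}$ and an arbitrary positive function $\epsilon$, the prescribed target for $f+g$. The starting estimate is the triangle inequality
\[
|(f+g)(x)-(f+g)(y)|\le |f(x)-f(y)|+|g(x)-g(y)|,
\]
so it suffices to force both summands below $\tfrac12\,\epsilon\big((f+g)(x)\big)$ on the pairs selected by the gauge. Two simplifications guide the construction. First, the selection condition $|x-y|<\min\{\delta(x),\delta(y)\}$ is symmetric, so it is enough to secure the one-sided bound $|(f+g)(x)-(f+g)(y)|<\epsilon\big((f+g)(x)\big)$ for every selected ordered pair; interchanging $x$ and $y$ then promotes it to the required minimum. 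Second, taking $\epsilon$ constant in the definition of $k$-continuity shows that $f$, $g$, and hence their sum $f+g$, are of Baire class one, so by Theorem \ref{THMLEC0} each of $f$ and $g$ admits, for every constant level $c>0$, a positive gauge $\delta^{f}_{c}$, respectively $\delta^{g}_{c}$, forcing $|f(x)-f(y)|<c$, respectively $|g(x)-g(y)|<c$, on its selected pairs; by Remark \ref{RMKLEC} these gauges may even be chosen Baire one.

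I would then build the gauge for $f+g$ pointwise. At a point $x$ set $c_x:=\tfrac12\,\epsilon\big((f+g)(x)\big)>0$ and let
\[
\delta(x):=\min\big\{\delta^{f}_{c_x}(x),\,\delta^{g}_{c_x}(x)\big\}.
\]
If a selected pair $(x,y)$ is governed by the \emph{smaller} of the two levels $c_x,c_y$, say $c_x\le c_y$, then one reads off $|f(x)-f(y)|<c_x$ and $|g(x)-g(y)|<c_x$, whence $|(f+g)(x)-(f+g)(y)|<\epsilon\big((f+g)(x)\big)$, which is exactly the one-sided bound sought.

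The main obstacle is precisely this last clause: the estimate coming from the level-$c_x$ gauges is only available when \emph{both} endpoints are controlled at level $c_x$, whereas the pointwise definition controls $y$ only at its own, possibly larger, level $c_y$. In other words, the target $\epsilon$ is evaluated at $(f+g)(x)=f(x)+g(x)$ and therefore varies from point to point in a way not visible to the individual gauges of $f$ and $g$; one cannot split ``$\tfrac12\,\epsilon$'' into a function of $f(x)$ alone and a function of $g(x)$ alone, and the difficulty concentrates exactly on the points where $\epsilon\circ(f+g)$ is small. I would resolve this by reconciling the two endpoints: shrink $\delta$ further wherever $\epsilon\circ(f+g)$ is small, so that a pair can be selected only when the two levels $c_x,c_y$ are comparable and the smaller one still governs both endpoints — concretely, by replacing $c_x$ in the definition above with an infimum of $\tfrac12\,\epsilon\circ(f+g)$ over a small neighbourhood of $x$ and verifying, through the Baire one regularity of $f$ and $g$, that the resulting gauge is still strictly positive. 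This reconciliation is the only delicate point; granting it, the one-sided bound holds on all selected pairs and the symmetrization completes the sum.

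For the product I would begin from the bilinear estimate
\[
|f(x)g(x)-f(y)g(y)|\le |f(x)|\,|g(x)-g(y)|+|g(y)|\,|f(x)-f(y)|.
\]
On the neighbourhoods cut out by the gauge the coefficients $|f(x)|$ and $|g(y)|$ are controlled by the (finite) local values of $f$ and $g$, again by their Baire one regularity, so the identical pointwise construction applies after dividing the local level by these coefficients; the same endpoint reconciliation is the crux. Together with the elementary observation that constants and the identity are $k$-continuous, the two items exhibit the $k$-continuous functions as a family closed under addition and multiplication.
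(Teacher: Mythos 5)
Your diagnosis of the difficulty is exactly right, but the repair you propose cannot work, and the obstruction is structural rather than technical. The concrete objection first: the corrected level you suggest, the infimum of $\frac{1}{2}\,\epsilon\circ(f+g)$ over a neighbourhood of $x$, can vanish. The tolerance $\epsilon$ in the definition of $k$-continuity is an \emph{arbitrary} positive function on the range; it need not be continuous, Baire one, or locally bounded below, so $\epsilon\circ(f+g)$ may have infimum $0$ on every neighbourhood of some point. This happens already in the easiest possible case: take $f$ the identity, $g\equiv 0$ (both continuous, hence $k$-continuous), and $\epsilon(t):=\vert t-t_{0}\vert$ for $t\neq t_{0}$, $\epsilon(t_{0}):=1$; then the proposed infimum at $x=t_{0}$ is $0$, so the ``gauge'' is not a gauge. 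The Baire one regularity of $f$ and $g$ cannot restore positivity, because that regularity says nothing whatsoever about $\epsilon$.

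Second, and decisively: your construction uses the hypothesis on $f$ and $g$ only through their Baire-one-ness. The gauges $\delta^{f}_{c},\delta^{g}_{c}$ are constant-tolerance gauges, which Theorem \ref{THMLEC0} supplies for \emph{every} Baire one function; $k$-continuity is invoked solely to conclude that $f$ and $g$ are Baire one, and never again. Hence, if your scheme (with any repair remaining inside it) could be completed, it would prove the stronger statement ``$f,g$ Baire one implies $f+g$ is $k$-continuous''; taking $g\equiv 0$, every Baire one function would be $k$-continuous. That is false: the Riemann function of Example \ref{EXMPZ} is Baire one but is not a right Baire one compositor, hence is not $k$-continuous by Theorem \ref{theo2}. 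So the endpoint mismatch you flagged is not a removable technicality; it is precisely the point where the full strength of $k$-continuity must enter, through gauges adapted to non-constant, range-indexed tolerances $\epsilon(f(\cdot))$, $\epsilon(g(\cdot))$, or equivalently through the right-compositor and $F_{\sigma}$-preimage characterisations of Theorems \ref{theo1} and \ref{theo2}. (Note that the paper itself offers no proof to compare against: it cites Lemma 3.3 of \cite{FC3}.) Any correct argument has to convert the coupled quantity $\epsilon\bigl((f+g)(x)\bigr)$, which depends jointly on $f(x)$ and $g(x)$, into tolerances that the separate hypotheses on $f$ and on $g$ can see; your construction never does this, and the same objection applies verbatim to your treatment of the product.
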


In \cite{KM} and \cite{Z}, right Baire compositors are studied. In \cite{Z}, right Baire one compositors are characterised as follows:
\begin{thm} (\cite{Z}: Theorem 1) \label{theo1}
Let $f: {\Bbb R}  \rightarrow {\Bbb R}$ be a function. Then the following statements are equivalent.
\begin{enumerate}
\item{For any closed subset $A$ of ${\Bbb R}$, $f^{-1}(A)$  is an $F_{\sigma}$ set.}
\item{For any $F_{\sigma}$ set $A$, $f^{-1}(A)$ is an $F_{\sigma}$ set.}
\item{For every positive Baire class one function $\epsilon(\cdot)$, there is a positive function $\delta(\cdot)$ on ${\Bbb R}$ such that 
$\vert x -y \vert < \min \{\delta(x), \delta(y)\}$ implies $\vert f(x) - f(y) \vert  <  \min \{ \epsilon(f(x)) , \epsilon(f(y))$.}
\item{$f$ is a right Baire one compositor. }
\end{enumerate}
\end{thm}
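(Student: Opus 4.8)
\emph{Plan.} I would prove the four statements equivalent by establishing the cycle $(1)\Rightarrow(2)\Rightarrow(3)\Rightarrow(4)\Rightarrow(1)$, the two ``topological'' links and the implication $(3)\Rightarrow(4)$ being routine, while the implication $(2)\Rightarrow(3)$ carries all the difficulty.

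First, $(1)\Rightarrow(2)$ is immediate: if $A=\bigcup_{n}C_{n}$ with each $C_{n}$ closed, then $f^{-1}(A)=\bigcup_{n}f^{-1}(C_{n})$ is a countable union of $F_{\sigma}$ sets, hence $F_{\sigma}$. For $(4)\Rightarrow(1)$, I would fix a closed set $A\subseteq{\Bbb R}$ and use that the characteristic function $\chi_{A}$ is Baire one, being the pointwise limit of the continuous functions $x\mapsto\max\{0,\,1-n\,d(x,A)\}$; since $f$ is a right Baire one compositor, $\chi_{A}\circ f=\chi_{f^{-1}(A)}$ is Baire one, and therefore $f^{-1}(A)=(\chi_{f^{-1}(A)})^{-1}((\tfrac{1}{2},\tfrac{3}{2}))$ is the preimage of an open set under a Baire one map, i.e. an $F_{\sigma}$ set. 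This yields $(1)$.

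For $(3)\Rightarrow(4)$ I would invoke Theorem~\ref{THMLEC0} together with Remark~\ref{RMKLEC}. Let $g\in{\mathcal B}_{1}$ and fix $\eta>0$. By the cited results there is a \emph{positive Baire one} function $\delta_{g}$ on $\Bbb R$ with $|u-v|<\min\{\delta_{g}(u),\delta_{g}(v)\}\Rightarrow|g(u)-g(v)|<\eta$. Applying $(3)$ with the admissible choice $\epsilon:=\delta_{g}$ produces a positive $\delta$ on $\Bbb R$ such that $|x-y|<\min\{\delta(x),\delta(y)\}$ forces $|f(x)-f(y)|<\min\{\delta_{g}(f(x)),\delta_{g}(f(y))\}$, and hence $|g(f(x))-g(f(y))|<\eta$. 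Thus $g\circ f$ satisfies the criterion of Theorem~\ref{THMLEC0} for every $\eta>0$ and is Baire one; so $f$ is a right Baire one compositor.

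The crux is $(2)\Rightarrow(3)$. Note first that $(2)$ forces $f$ itself to be Baire one (open sets are $F_{\sigma}$), so Theorem~\ref{THMLEC0} and Remark~\ref{RMKLEC} supply, for each $n$, a positive Baire one modulus $\delta_{n}$ for $f$ at tolerance $1/n$, which I would render decreasing by passing to $\min\{\delta_{1},\dots,\delta_{n}\}$. Given a positive Baire one $\epsilon$, the level sets $A_{n}=\epsilon^{-1}((1/n,\infty))$ are $F_{\sigma}$, increasing, with union $\Bbb R$, so by $(2)$ the pullbacks $B_{n}=f^{-1}(A_{n})$ are $F_{\sigma}$, increasing, with union $\Bbb R$, and on $B_{n}$ one has $\epsilon(f(x))>1/n$. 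The plan is to glue the $\delta_{n}$ into a single positive gauge $\delta$, choosing at each $x$ an index tied to the least $n$ with $x\in B_{n}$. The main obstacle is exactly this assembly: the tolerance in $(3)$ is the \emph{two-sided} quantity $\min\{\epsilon(f(x)),\epsilon(f(y))\}$, while $\epsilon$ is merely Baire one, so two points close in the min-sense may land in very different level sets, and the naive matching of indices makes the required inequality $|x-y|<\min\{\delta_{n}(x),\delta_{n}(y)\}$ fail at one of the two points. Reconciling the mutual minimum with the point-dependent index while keeping $\delta>0$ everywhere is where the genuine $F_{\sigma}$-structure of the $B_{n}$ (and not merely $f\in{\mathcal B}_{1}$) must enter; that $f\in{\mathcal B}_{1}$ alone cannot suffice is witnessed by $f=\chi_{{\Bbb Q}}$, which is Baire one yet violates $(3)$ already for the constant $\epsilon\equiv\tfrac12$ by a Baire-category argument. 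I would therefore either refine $\delta$ successively on each difference $B_{n}\setminus B_{n-1}$ using the $F_{\sigma}$ decomposition of these sets, or argue the contrapositive $\neg(3)\Rightarrow\neg(1)$, extracting from the failure of every gauge a closed set whose $f$-preimage is not $F_{\sigma}$ by a category argument of the type just indicated. I expect this step to be the main difficulty of the proof.
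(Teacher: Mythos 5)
Your routine implications are correct: $(1)\Rightarrow(2)$ is immediate, $(4)\Rightarrow(1)$ via the Baire one function $\chi_{A}$ is fine, and your $(3)\Rightarrow(4)$ — feeding Lecomte's Baire one gauge $\delta_{g}$ from Remark~\ref{RMKLEC} back into condition $(3)$ as the tolerance $\epsilon$ — is a clean and valid use of Theorem~\ref{THMLEC0}. But the proposal never actually proves the implication that brings $(3)$ into the equivalence. You state openly that $(2)\Rightarrow(3)$ is ``the main difficulty,'' you correctly identify the obstruction (the two-sided minimum $\min\{\epsilon(f(x)),\epsilon(f(y))\}$ versus the point-dependent index $n(x)$, so that $|x-y|<\min\{\delta_{n(x)}(x),\delta_{n(y)}(y)\}$ does not yield $|x-y|<\min\{\delta_{n}(x),\delta_{n}(y)\}$ for a common $n$), and you then offer two candidate strategies without executing either. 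That implication is precisely the substantive content of Zhao's theorem; everything else is soft. Note also that your choice of cycle makes this missing step carry more than it needs to: $(2)\Rightarrow(4)$ is cheap on its own, since for $g\in{\mathcal B}_{1}$ and $A$ open, $g^{-1}(A)$ is $F_{\sigma}$, hence $(g\circ f)^{-1}(A)=f^{-1}(g^{-1}(A))$ is $F_{\sigma}$ by $(2)$. So $(1)$, $(2)$, $(4)$ are mutually equivalent by entirely routine arguments; the theorem's real claim is that they imply $(3)$, and that is the part absent from your proof. The gauge-assembly you sketch (refining $\delta$ on the differences $B_{n}\setminus B_{n-1}$ using their closed decompositions, shrinking $\delta(x)$ below the distances from $x$ to the finitely many relevant closed pieces it avoids) is the right kind of idea, but as written it is a plan, not a proof.

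A secondary but genuine error: your claimed witness that $f\in{\mathcal B}_{1}$ alone cannot give $(3)$ is $f=\chi_{{\Bbb Q}}$, which you call Baire one. It is not: $\chi_{{\Bbb Q}}$ is the Dirichlet function, which is everywhere discontinuous and hence fails the point-of-continuity criterion on the closed set ${\Bbb R}$ itself; it is Baire two but not Baire one (the paper makes exactly this point). The correct witness — and the one the paper supplies — is the Riemann function of Example~\ref{EXMPZ}, which is Baire one yet not a right Baire one compositor. This slip is not load-bearing for your cycle, but since your attack on $(2)\Rightarrow(3)$ relies on intuition about how Baire one data can fail the gauge condition, it should be repaired before that step can be trusted.
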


Another characterisation of right Baire one compositors, involving $k-$continuous functions, is given in \cite{FC3}:

\begin{thm} (\cite{FC3}: Theorem 2.6) \label{theo2}
A function $f:{\Bbb R}\rightarrow {\Bbb R}$ is a right Baire one compositor if and only if $f$ is $k-$continuous.
\end{thm}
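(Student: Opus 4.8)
The plan is to deduce the statement from Zhao's characterisation in Theorem \ref{theo1}, which already identifies the right Baire one compositors with the functions satisfying the gauge condition (3) of that theorem. Since (3) requires a gauge $\delta$ only for every positive \emph{Baire one} function $\epsilon$, whereas $k$-continuity requires one for every positive function $\epsilon$, the two notions differ solely in the class of admissible $\epsilon$; as every positive Baire one function is in particular positive, $k$-continuity trivially implies (3). Thus the content of the theorem is that these a priori different conditions in fact coincide, and I would organise the proof as the two implications below.

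For the implication ``$k$-continuous $\Rightarrow$ right Baire one compositor'' I would give a direct, self-contained argument through Theorem \ref{THMLEC0} rather than quoting (3). Fix a Baire one $g:{\Bbb R}\rightarrow{\Bbb R}$ and $\eta>0$. By Theorem \ref{THMLEC0} applied to $g$, together with Remark \ref{RMKLEC}, there is a positive function $\beta$ on ${\Bbb R}$ such that $\vert u-v\vert<\min\{\beta(u),\beta(v)\}$ implies $\vert g(u)-g(v)\vert<\eta$. Now apply $k$-continuity of $f$ with the choice $\epsilon=\beta$: there is a positive function $\delta$ on ${\Bbb R}$ with $\vert x-y\vert<\min\{\delta(x),\delta(y)\}$ implying $\vert f(x)-f(y)\vert<\min\{\beta(f(x)),\beta(f(y))\}$. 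Chaining the two gauges, whenever $\vert x-y\vert<\min\{\delta(x),\delta(y)\}$ we obtain $\vert g(f(x))-g(f(y))\vert<\eta$; since $\eta$ was arbitrary, Theorem \ref{THMLEC0} shows $g\circ f\in{\mathcal B}_1$, so $T_f$ maps ${\mathcal B}_1$ into itself. Note that this half uses $k$-continuity only for the Baire one gauge $\beta$, which is precisely why it is the easy direction.

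For the converse ``right Baire one compositor $\Rightarrow$ $k$-continuous'' I would first record the reduction that such an $f$ is itself Baire one: composing with the identity map (which is continuous, hence Baire one) gives $f=T_f(\mathrm{id})\in{\mathcal B}_1$. The task is then, given an \emph{arbitrary} positive $\epsilon$, to manufacture the value-dependent gauge $\delta$. The plan is to imitate the construction underlying Theorem \ref{THMLEC0}, but with the constant tolerance replaced by the pointwise tolerance $\epsilon(f(x))$: using the characterisation of the Baire one function $f$ by the existence of a point of continuity of $f_{\vert C}$ on every nonempty closed set $C\subseteq{\Bbb R}$, one stratifies ${\Bbb R}$ and assigns to each $x$ a radius $\delta(x)$ governed by $\epsilon(f(x))$, so that $f$ varies by less than $\epsilon(f(x))$ on the corresponding gauge neighbourhood.

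I expect this last step to be the main obstacle, and it is instructive to see why the obvious shortcuts fail. One cannot simply dominate $\epsilon$ by a positive Baire one function and invoke (3): an arbitrary positive $\epsilon$ need not lie above any positive Baire one function (if $\epsilon$ has $\liminf$ equal to $0$ at every point, any Baire one minorant would be forced to vanish at each of its continuity points, contradicting positivity). Nor can one naively patch together the constant-error gauges $\gamma_n$ coming from $f\in{\mathcal B}_1$ (those with $\vert f(x)-f(y)\vert<1/n$): because the \emph{same} quantity $\min\{\delta(x),\delta(y)\}$ controls the hypothesis while the conclusion demands the \emph{finer} of the two tolerances $\epsilon(f(x)),\epsilon(f(y))$, a pair in which one point requires a much coarser scale than the other escapes the control of either individual gauge. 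The delicate point of the genuine argument is therefore the \emph{simultaneous} choice of $\delta$ across the whole stratification, carried out without assuming any regularity of $\epsilon$; pushing this bookkeeping through is exactly what upgrades condition (3) to full $k$-continuity and completes the equivalence.
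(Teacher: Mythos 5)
Your first implication is sound: chaining the gauge that Theorem~\ref{THMLEC0} provides for the Baire one function $g$ with the gauge that $k$-continuity of $f$ provides for the tolerance $\beta$ is a complete and correct proof that $k$-continuous functions are right Baire one compositors. For calibration, note that the survey itself contains no proof of Theorem~\ref{theo2}; it is quoted from \cite{FC3}, so your proposal has to stand on its own merits rather than be compared with an argument in the paper.

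The converse is where the theorem actually lives, and there your proposal has a genuine gap, which you yourself flag: you describe a plan, explain why two shortcuts fail, and then say that ``pushing this bookkeeping through'' would finish the proof --- but that bookkeeping \emph{is} the hard direction, and it is absent. Moreover, the plan as stated cannot be pushed through, because the only properties of $f$ it invokes are consequences of $f\in{\mathcal B}_{1}$ (the reduction $f=T_f(\mathrm{id})$, then the point-of-continuity characterisation of Baire one functions on closed sets). If an argument of that shape worked, it would prove that \emph{every} Baire one function is $k$-continuous, and hence, by your own easy direction, that every Baire one function is a right Baire one compositor; this is false, since the Riemann function of Example~\ref{EXMPZ} is Baire one yet composing it on the right with a Baire one (even once-discontinuous) function yields the Dirichlet function. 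So any correct proof of the converse must use the compositor hypothesis with its full strength, not merely $f\in{\mathcal B}_{1}$ --- most naturally through Zhao's Theorem~\ref{theo1}, which converts it into the statement that $f^{-1}(A)$ is $F_{\sigma}$ for every closed (equivalently every $F_{\sigma}$) set $A$. The missing core is then the passage from this preimage property to a gauge for an \emph{arbitrary} positive $\epsilon$: the level sets $\{y:\epsilon(y)>1/n\}$ carry no descriptive-set-theoretic structure (this is exactly what blocks the reduction to condition (3) of Theorem~\ref{theo1}), so one must cover the range at each level by closed sets whose $f$-preimages are $F_{\sigma}$ and make a simultaneous choice of $\delta$ in which each point carries only finitely many avoidance conditions. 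That construction, which is the substance of Theorem 2.6 in \cite{FC3}, is what your proposal would need to supply.
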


Next, we give a simple example of a $k$-continuous function that is not continuous.  

\begin{exmp}
The discontinuous function $f = \chi_{\{0\}}$ is $k$-continuous, because $\{0\}$ is both $F_{\sigma}$ and $G_{\delta}$. So $T_{f}$ 
maps ${\mathcal B}_{1}$ into itself, although $T_{f}$
does not map the space of continuous maps into itself.
\end{exmp}

\begin{rmk}  Clearly, since each open set is an $F_{\sigma}$, from Theorem \ref{theo1} it follows that every continuous function is a right Baire one compositor. 
The collection of Baire one compositors lies strictly in between the collection of Baire one functions and the collection of continuous functions.  The 
Riemann function, as Example \ref{EXMPZ} shows, is a Baire one function but it is not a right Baire one compositor. 
\end{rmk}
 
\begin{rmk}
The composition of two right Baire one compositors is a right Baire one compositor. Therefore, if $g: {\Bbb R} \rightarrow {\Bbb R}$ is a right Baire one compositor, 
then, for each $c \in {\Bbb R}$,  $c+ g$ and $cg$ are right Baire one compositors as well because they are the compositions of g and the function h(x) = c + x and k(x) = cx. 
\end{rmk}

However, Zhao writes in \cite{Z} that it is still not clear whether the sum and the product of a continuous function and a right Baire one compositor are right Baire one compositors.
We can give a positive answer to the problem posed by Zhao, combining Proposition \ref{propk} with Theorem \ref{theo2}:

\begin{thm} 
Let $f$ be a continuous function and let $g$ be a right Baire one compositor. Then $f+g$ and $fg$ are right Baire one compositors.
\end{thm}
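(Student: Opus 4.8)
The plan is to reduce the statement entirely to the two already-established facts, namely Proposition \ref{propk} (the sum and product of two $k$-continuous functions are $k$-continuous) and Theorem \ref{theo2} (a function is a right Baire one compositor if and only if it is $k$-continuous). First I would record that a continuous function is $k$-continuous; this is stated explicitly in the note immediately after the definition of $k$-continuity, so I may invoke it directly. Thus $f$ continuous gives $f$ $k$-continuous, and $g$ a right Baire one compositor gives, via Theorem \ref{theo2}, that $g$ is $k$-continuous as well.

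Next I would apply Proposition \ref{propk}: since both $f$ and $g$ are now known to be $k$-continuous, parts (1) and (2) of that proposition yield that $f+g$ and $fg$ are $k$-continuous. Finally, I would invoke Theorem \ref{theo2} once more, in the reverse direction, to translate $k$-continuity of $f+g$ and of $fg$ back into the statement that each is a right Baire one compositor. This closes the argument: the whole proof is a three-link chain (continuity $\Rightarrow$ $k$-continuity for $f$, Theorem \ref{theo2} $\Rightarrow$ $k$-continuity for $g$, Proposition \ref{propk} closes $k$-continuity under $+$ and $\cdot$, Theorem \ref{theo2} $\Rightarrow$ right Baire one compositor).

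I do not expect any genuine obstacle here, since all the analytic content has been front-loaded into Proposition \ref{propk} and Theorem \ref{theo2}; the theorem is precisely the observation that these two results combine to answer Zhao's question. The only point requiring any care is that Proposition \ref{propk} is stated for two $k$-continuous functions, so I must be sure to have promoted \emph{both} $f$ and $g$ to the class of $k$-continuous functions before applying it — which is exactly what the preliminary step accomplishes for $f$ (via continuity) and Theorem \ref{theo2} accomplishes for $g$. The proof would read essentially as follows.

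\begin{proof}
Since $f$ is continuous, $f$ is $k$-continuous. Since $g$ is a right Baire one compositor, by Theorem \ref{theo2}, $g$ is $k$-continuous as well. By Proposition \ref{propk}, the sum $f+g$ and the product $fg$ are therefore $k$-continuous. Applying Theorem \ref{theo2} once more, we conclude that $f+g$ and $fg$ are right Baire one compositors.
\end{proof}
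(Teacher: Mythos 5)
Your proof is correct and follows exactly the same route as the paper's: continuity of $f$ gives $k$-continuity, Theorem \ref{theo2} gives $k$-continuity of $g$, Proposition \ref{propk} closes $k$-continuity under sum and product, and Theorem \ref{theo2} applied in reverse yields the conclusion. There is nothing to add or fix.
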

\begin{proof}
Let $f$ be a continuous function. Then $f$ is also $k-$continuous.
Let $g$ be a right Baire one compositor, then, by Theorem \ref{theo2}, $g$ is a $k-$continuous function. Hence, from the properties in Proposition \ref{propk} 
it follows that $f+g$ and $fg$ are $k-$continuous functions. By Theorem \ref{theo2} this is equivalent to saying that $f+g$ and $fg$ are right Baire one compositors. 
\end{proof}

Next, we give a characterisation of right Baire two compositors.
\begin{thm} \label{theo3}
Let $f: {\Bbb R} \rightarrow {\Bbb R}$. The following conditions are equivalent:
\begin{enumerate}
\item{For any $G_{\delta}$ set $C$, $f^{-1}(C)$ is a $G_{ \delta \sigma}$.} 
\item{For any $G_{\delta \sigma}$ set $C$, $f^{-1}(C)$ is a $G_{ \delta \sigma}$.} 
\item{The operator $T_f$ maps the space ${\mathcal B}_{2}$ into itself (that is, the function $f: {\Bbb R} \rightarrow {\Bbb R}$ is a right Baire two compositor).}
\end{enumerate}
\end{thm}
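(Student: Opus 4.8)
The plan is to establish the three equivalences by proving the cyclic chain $(1) \Rightarrow (2) \Rightarrow (3) \Rightarrow (1)$, exploiting the characterisation of Baire two functions via preimages of open sets being $G_{\delta\sigma}$ (equivalently, preimages of $G_\delta$ sets being $G_{\delta\sigma}$), exactly as Zhao's Theorem \ref{theo1} does for the Baire one case. Throughout I would use the structural facts recorded in Section 2: that a countable union of $G_{\delta\sigma}$ sets is again $G_{\delta\sigma}$, and that taking preimages commutes with arbitrary unions and intersections, so that $f^{-1}$ of a countable union (respectively intersection) is the countable union (respectively intersection) of the preimages.

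\medskip\noindent
\emph{Step $(1) \Rightarrow (2)$.} Suppose $f^{-1}(C)$ is $G_{\delta\sigma}$ for every $G_\delta$ set $C$. Let $A$ be an arbitrary $G_{\delta\sigma}$ set, so $A = \bigcup_{n} C_n$ with each $C_n$ a $G_\delta$ set. Then $f^{-1}(A) = \bigcup_n f^{-1}(C_n)$, and by hypothesis each $f^{-1}(C_n)$ is $G_{\delta\sigma}$. A countable union of $G_{\delta\sigma}$ sets is $G_{\delta\sigma}$, so $f^{-1}(A)$ is $G_{\delta\sigma}$. This step is purely formal.

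\medskip\noindent
\emph{Step $(2) \Rightarrow (3)$.} Assume condition (2). Let $g \in {\mathcal B}_2$; I must show $g \circ f = T_f(g) \in {\mathcal B}_2$. Using the characterisation ``$g$ is Baire two iff $g^{-1}(U)$ is $G_{\delta\sigma}$ for every open $U \subseteq {\Bbb R}$'', I take an arbitrary open set $U$ and observe that $(g \circ f)^{-1}(U) = f^{-1}(g^{-1}(U))$. Since $g$ is Baire two, $g^{-1}(U)$ is a $G_{\delta\sigma}$ set; applying (2) to this set yields that $f^{-1}(g^{-1}(U))$ is $G_{\delta\sigma}$. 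As $U$ was arbitrary, $g \circ f$ is Baire two, so $T_f$ maps ${\mathcal B}_2$ into itself.

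\medskip\noindent
\emph{Step $(3) \Rightarrow (1)$.} This is the step I expect to be the main obstacle, since it requires manufacturing, from the abstract compositor hypothesis, a concrete test function detecting any prescribed $G_\delta$ set. Given a $G_\delta$ set $C \subseteq {\Bbb R}$, the natural idea is to apply (3) to the characteristic function $\chi_C$: because $C$ is $G_\delta$ and its complement is $F_\sigma$ (hence $G_{\delta\sigma}$, as every $F_\sigma$ is $G_{\delta\sigma}$ in a metrizable space), the function $\chi_C$ is Baire two, so $\chi_C \circ f = \chi_{f^{-1}(C)}$ must be Baire two by hypothesis. I would then need to extract from ``$\chi_{f^{-1}(C)}$ is Baire two'' the conclusion that $f^{-1}(C)$ is itself $G_{\delta\sigma}$; this is where care is needed, since a priori a Baire two characteristic function only tells us $\chi_{f^{-1}(C)}^{-1}(U)$ is $G_{\delta\sigma}$ for open $U$, and one recovers $f^{-1}(C) = \chi_{f^{-1}(C)}^{-1}((1/2, 3/2))$ as a single such preimage, which is precisely $G_{\delta\sigma}$. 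Thus choosing the open interval $(1/2,3/2)$ (or any open set separating $1$ from $0$) pins down $f^{-1}(C)$ as a $G_{\delta\sigma}$ set, closing the cycle. The verification that $\chi_C$ is genuinely Baire two for $C$ a $G_\delta$ set --- equivalently that its preimages of open sets are $G_{\delta\sigma}$ --- is the delicate bookkeeping point, but it follows from checking the finitely many possible preimages ($\emptyset$, $C$, $C^c$, ${\Bbb R}$) against the defining open set, each of which is $G_{\delta\sigma}$.
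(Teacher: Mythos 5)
Your proposal is correct and follows essentially the same route as the paper's own proof: the same preimage argument via the characterisation of Baire two functions for $(2) \Rightarrow (3)$, and the same characteristic-function test $g = \chi_C$ with an open interval separating $1$ from $0$ for $(3) \Rightarrow (1)$ (the paper uses $(0,\frac{3}{2})$, you use $(\frac{1}{2},\frac{3}{2})$). The only cosmetic difference is that you spell out the countable-union argument for $(1) \Rightarrow (2)$, which the paper dismisses as immediate.
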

\begin{proof}
$(1)$ and $(2)$  are clearly equivalent as every $G_{\delta}$ set is a $G_{\delta \sigma}$ set.  \\
$(2) \Rightarrow (3)$ Let $A$ be an open set and let $g \in {\mathcal B}_{2}$. Then $g^{-1}(A)$ is a $G_{\delta \sigma}$. 
Hence, by the hypothesis it follows that $(g \circ f)^{-1}(A) = f^{-1}(g^{-1}(A))$ is a $G_{\delta \sigma}$. Hence, $g \circ f \in {\mathcal B}_{2}$. \\
$(3) \Rightarrow (1)$ Suppose $f$ is a right Baire two compositor and $C$ is a $G_{\delta}$ set. Let $g$ be the characteristic function of $C$. 
As $C$ is both $F_{\sigma \delta}$ and $G_{\delta \sigma}$, $g$ is Baire two. Hence $g \circ f$ is Baire two. Therefore $f^{-1}(C)$ is  a $G_{ \delta \sigma}$ 
set because $f^{-1}(C)= (g \circ f)^{-1}(0,\frac{3}{2}) $.
\end{proof}

The following example shows that there exist functions which are right Baire two compositors but not right Baire one compositors.

\begin{exmp}
It is well-known that $\Bbb Q$ is an $F_{\sigma }$ set, but not a $G_{\delta}$ set. As every $F_{\sigma}$ set is a $G_{\delta \sigma}$ set,  $\Bbb Q$ is 
a $G_{\delta \sigma}$ set. Moreover, ${\Bbb R} \setminus {\Bbb Q}= {\Bbb R} \setminus \cup_{q \in \Bbb Q}\{q\} = \cap_{q \in \Bbb Q} ({\Bbb R} \setminus \{q\}) $ 
is a $G_{\delta}$ set but not an 
$F_{\sigma}$ set. As every  $G_{\delta}$ set is a $G_{\delta \sigma}$ set, ${\Bbb R} \setminus {\Bbb Q}$ is a $G_{\delta \sigma}$ set. Consider the 
function $f=\chi_ {{\Bbb R}\setminus{\Bbb Q}}$. Then $f^{-1}(\{1\})={\Bbb R}\setminus {\Bbb Q}$. As $\{1\}$ is closed, from condition (1) of Theorem \ref{theo1}, 
it follows that $f$ is not a right Baire one compositor.
Now, let $C$ be a $G_{\delta}$ set. Then  \[f^{-1}(C) = \left\{ \begin{array}{ll}
{\Bbb R}  & \mbox{if } 0,1 \in C\\
 {\Bbb R}\setminus {\Bbb Q} & \mbox{if } 1 \in C, 0 \notin C \\
{ \Bbb Q}  & \mbox{if } 0 \in C, 1 \notin C\\
\emptyset  & \mbox{if } 0,1 \notin C\\
\end{array}
\right.\] 
and hence $f^{-1}(C)$ is a $G_{\delta \sigma}$ set. From condition (1) of Theorem \ref{theo3}, it follows that $f$ is a right Baire two compositor.
\end{exmp}

\textbf{Acknowledgment}\\
We express our gratitude to the referee for her/his careful reading of the paper and her/his nice suggestions which were of a great help to improve it.


\begin{thebibliography}{9}
\bibitem{AL0}{A. Alikhani-Koopaei, {\it On the sets of fixed points of bounded Baire one functions}, Asian-Eur. J. Math. {\bf 12} (2019), no. 3, 1950040, 10 pp.}
\bibitem{AL}{A. Alikhani-Koopaei, {\it Equi-Baire one family of functions on metric spaces: a generalization of equi-continuity; and some applications}, Topology Appl. {\bf 277} (2020), 107170, 11 pp.}
\bibitem{AK}{C.D. Aliprantis, K.C. Border, {\it Infinite dimensional analysis. A hitchhiker's guide}, Springer, Berlin, 2006. }
\bibitem{ABM}{J. Appell, J. Bana\'s, N. Merentes, {\it Bounded variations and around}, De Gruyter Series in Nonlinear Analysis and Applications, 17, De Gruyter, Berlin, 2014.}
\bibitem{ABK}{J. Appell, D. Bugajewska, P. Kasprzak, N. Merentes, S. Reinwand, J.L. S\'anchez, {\it Applications of BV type spaces}, Oberwolfach Preprints, 2019.}
\bibitem{AGM}{J. Appell, N. Guanda, N. Merentes, J. L. S\'anchez, {\it Boundedness and continuity properties of nonlinear composition operators: a survey}, Commun. Appl. Anal. {\bf 15} (2011), 153--182.}
\bibitem{AZ}{J. Appell, P.P. Zabrejko,  {\it Nonlinear superposition operators}, Cambridge Tracts in Mathematics {\bf 95}, Cambridge University Press, Cambridge, 1990.}
\bibitem{AZ1}{J. Appell, P.P. Zabrejko,  {\it Remarks on the superposition operator problem in various function spaces}, Complex Var. Elliptic Equ. {\bf 55} (2010), no. 8--10, 727--737.}
\bibitem{BAI}{R. Baire, {\it Sur les fonctions des variables r\'eelles}, Ann. Mat. Pura Appl. {\bf 3} (1899), 1--122.}
\bibitem{BLS1} {G. Bourdaud, M. Lanza de Cristoforis, W. Sickel, {\it Superposition operators and functions of bounded p-variation II}, Nonlinear Anal. {\bf 62} (2005), no. 3, 483--517.}
\bibitem{BLS2}{G. Bourdaud, M. Lanza de Cristoforis, W. Sickel, {\it Superposition operators and functions of bounded p-variation}, Rev. Mat. Iberoam. {\bf 22} (2006), no. 2, 455--487.}
\bibitem{BBT}{A.M. Bruckner, J.B. Bruckner, B.S. Thomson, {\it Real Analysis}, Prentice Hall, United States, 1996}
\bibitem{BBKM}{D. Bugajewska, D. Bugajewski, P. Kasprzak, P. Ma\'ckowiak, {\it Nonautonomous superposition operators in the spaces of functions of bounded variation}, Topol. Methods Nonlinear Anal. {\bf 48} (2016), no. 2, 637--660.}
\bibitem{CDMS2}{D. Candeloro, L. Di Piazza, K. Musia\l,  A.R. Sambucini, {\it Gauge integrals and selections of weakly compact valued multifunctions}, J. Math. Anal. Appl. {\bf 441} (2016), no. 1, 293--308.}
\bibitem{DMS}{L. Di Piazza, V. Marraffa, B. Satco, {\it Closure properties for integral problems driven by regulated functions via convergence results}, J. Math. Anal. Appl. {\bf 466} (2018), no. 1, 690--710.}
\bibitem{FC3}{J.P. Fenecios, E.A. Cabral, {\it $K-$continuous functions and right $B_1$ compositors}, J. Indones. Math. Soc. {\bf 18} (2012), no. 1, 37--44.}
\bibitem{FC1}{J.P. Fenecios, E.A. Cabral, {\it Left Baire-$1$ compositors and continuous functions}. Int. J. Math. Math. Sci. 2013, Art. ID 878253, 3 pp.} 
\bibitem{GS}{M. Goebel, F. Sachweh, {\it On the autonomous Nemytskij operator in H\"older spaces}, Zeitschr. Anal. Anw. {\bf 18} (1999), no. 2, 205--229.}
\bibitem{JOS}{M. Josephy, {\it Composing functions of bounded variation}, Proc. Amer. Math. Soc. {\bf 83} (1981), no. 2, 354-356. }
\bibitem{KM}{O. Karlova, V. Mykhaylyuk, {\it On composition of Baire functions}, Topology Appl. {\bf 216} (2017), 8--24.}
\bibitem{KE}{S. Kechris, {\it Classical Descriptive Set Theory}, Springer-Verlag, New York, 1995.}
\bibitem{KU}{K. Kuratowski, {\it Topology, Volume I}, Academic Press, London, 1966.}
\bibitem{LP}{M. Laczkovich, D. Preiss, {\it $\alpha$-variation and transformation into $C^n$ functions}, Indiana Univ. Math. J. {\bf 34} (1985), no. 2, 405--424.}
\bibitem{LEC}{D. Lecomte, {\it How we can recover Baire class one functions?}, Mathematika {\bf 50} (2003), no. 1--2, 171--198.}
\bibitem{LTZ}{ P.-Y. Lee, W.-K.Tang, D. Zhao, {\it An equivalent definition of functions of the first Baire class}, Proc. Amer. Math. Soc. {\bf 129} (2001), no. 8, 2273--2275.} 
\bibitem{M}{P. Ma\'ckowiak, {\it On the continuity of superposition operators in the space of functions of bounded variation}, Aequationes Math. {\bf 91} (2017), no. 4, 759--777.}
\bibitem{MM}{M. Marcus, V.J. Mizel, {\it Complete characterization of functions which act, via superposition, on Sobolev spaces}, Trans. Amer. Math. Soc. {\bf 251} (1979), 187-218.}
\bibitem{NAT}{I.P. Natanson, {\it Theory of Functions of the Real Variable, Volume II}, Dover Publications, New York, 2016.}
\bibitem{SA}{B.-R. Satco, {\it Nonlinear Volterra integral equations in Henstock integrability setting.} Electron. J. Differential Equations 2008, no. 39, 9pp.}
\bibitem{Z}{D. Zhao, {\it Functions whose composition with Baire class one functions are Baire class one}, Soochow J. Math. {\bf 33} (2007), no. 4, 543--551.}
\end{thebibliography}
\end{document}